\newenvironment{psmallmatrix}
  {\left(\begin{smallmatrix}}
  {\end{smallmatrix}\right)}
\DeclareMathAlphabet{\mathpzc}{OT1}{pzc}{m}{it}
\theoremstyle{theorem}
\newtheorem{theorem}{Theorem}[section]         
\newtheorem{lemma}[theorem]{Lemma}
\newtheorem*{Teo}{Theorem} 
\newtheorem*{Pro}{Proposition}
\newtheorem{prop}[theorem]{Proposition}
\theoremstyle{remark}
\newtheorem{remark}[theorem]{Remark}
\newtheorem{ex}[theorem]{Example}%[chapter] 
\theoremstyle{definition}
\newtheorem{defi}[theorem]{Definition} %neu
\newcommand{\Dd}{\mathcal{D}}
\newcommand{\modu}{\mathrm{mod}\hspace{1pt}}
\newcommand{\Cc}{\mathcal{C}}
\newcommand{\cA}{\mathcal{A}}
\newcommand{\Tt}{\mathcal{T}}
\newcommand{\x}{\mathbf{x}}
\newcommand{\ra}{\rightarrow}
\newcommand{\Ext}{\mathrm{Ext}\hspace{.5pt}}
\newcommand{\rep}{\mathrm{rep}\hspace{1.2pt}}
\newcommand{\mmod}{\mathrm{mod}\hspace{1pt}}
\newcommand{\Hom}{\mathrm{Hom}\hspace{1pt}}
\newcommand{\End}{\mathrm{End}\hspace{1pt}}
\newcommand{\add}{\mathrm{add}\hspace{1pt}}
\newcommand{\udim}{\underline{\mathrm{dim}}\hspace{1pt}}
\newcommand{\ttop}{\mathrm{top}\hspace{1pt}}
\newcommand{\soc}{\mathrm{soc}\hspace{1pt}}
\title{Extensions in Jacobian algebras via punctured skein relations}
\author{Salom\'on Dom\'inguez - Ana Garc\'ia Elsener}
\thanks{MSC2020: 16G20, 16E30, 13F60
\\
The first author works for the University An\'auhac and has received travel grants from said institution during this project. The second author worked for the University of Glasgow while writing this article, and thanks professor Karin Baur and the grant Dimer algebras on surfaces P 30549 FWF 2017-2020 for their support on this project.}
\begin{document}

\maketitle 
\begin{center}
{\it Dedicated to the memory of Andrzej Skowro\'nski.}
\end{center}

\begin{abstract}
Given a Jacobian algebra arising from the punctured disk, we show that all non-split extensions can be found using the tagged arcs and skein relations previously developed in cluster algebras theory. Our geometric interpretation can be used to find non-split extensions over other Jacobian algebras arising from surfaces with punctures. We show examples in type $D$ and in other punctured surfaces.
\end{abstract}

\section{Introduction}

Jacobian algebras arising from surfaces were defined by Labardini-Fragoso in \cite{L} building in the works \cite{DWZ} and \cite{FST}. The reader can also find an overview of Labardini's work on Jacobian algebras here \cite{labardini2016}. Given a compact Riemann surface with some disk removed, in order to create boundaries, and adding marked points in each boundary component and in the interior of the surface, we can define a triangulation. A triangulation is a finite set of curves (up to isotopy) splitting the surface into (ideal) triangles. A marked point in the interior of the surface is called a puncture. The simplest punctured surface is the once-punctured disk. This surface is associated to the cluster category of type $D$, see \cite{S}. A triangulation of the punctured disk defines a cluster-tilted algebra of type $D$ \cite{BMRRT,bmr}.  More generally, a punctured surface defines a cluster category \cite{QZ}, and a triangulation of the surface defines a Jacobian algebra in the sense of \cite{L}.

\smallskip

Br{\"u}stle-Zhang \cite{BZ} introduced a geometric interpretation of Amiot's generalized cluster categories \cite{Am}.
For the unpunctured version, Jacobian algebras arising from surface triangulations are gentle algebras and were studied by Assem et. all in \cite{ABCP}, where string modules are defined by arcs that do not belong to the triangulation. Gentle algebras and their module categories are very well understood. Building on these works, and on their knowledge on gentle algebras and cluster algebras arising from unpunctured surfaces, Canakci and Schroll \cite{CSh} study non-split extensions in module categories of gentle algebras arising from surface triangulations. They show how to find non-split extensions and, as a consequence, they find non-split triangles in the cluster category. The non-split extensions and triangles arise from skein relations over the cluster algebra, this is viewed as a geometric operation that smooths arc crossings in the interior of the surface. 

%See in the next figure the pair of arcs $\alpha$ and $\beta$ crossing. In our convention $\alpha^+ \beta$ means \emph{walk along $\alpha$ and turn right following $\beta$ before reaching the crossing point,} and $\alpha^- \beta$ is the same but turning left.

%\begin{figure}[h]
%\centering
%\def\svgwidth{2.4in}
%
%\caption{Possible middle terms for the extension. Lemma \ref{middle-terms}.}
%\label{m-t}
%\end{figure}

Skein relations were used to study bases of cluster algebras arising from surfaces (and to prove the Fomin-Zelevinsky positivity conjecture over said algebras). See \cite{MW,MSW}. In the trivial coefficients case, skein relations are well understood, and formulas on cluster algebras arising from surfaces with punctures are known. If no frozen variables exist, by the decorated Teichm{\"u}ller spaces associated to surfaces 
(even in the presence of punctures), Ptolemy relations hold true by construction.

%\begin{figure}[h]
%\centering
%\def\svgwidth{3.6in}
%
%\caption{Possible middle terms for the extension. Lemma \ref{middle-terms}.}
%\label{m-t}
%\end{figure}

%See in the next figure the rule $\alpha^+ \beta$ and $\alpha^- \beta$ in the punctured setting. Note that in one case we see a self-intersection. 

%We use cluster algebra equations to find a different presentation. 

In this work we use skein relations, that appear as cluster algebra identities over a once punctured surface, to find all non-split extensions over a particular cluster tilted algebra of type $D$. Then we find triangles in the cluster category and we characterize when these triangles induce non-split extensions over Jacobian algebras. 

\smallskip

The structure of the paper goes as follows. After finding all non split extensions for a particular triangulation of the punctured disk in Section \ref{section type-D-cluster}, we recall skein relations and other cluster algebra formulas we will need in Section \ref{sec:skein-rel}. We see how the extensions in Section \ref{section type-D-cluster} and skein relations are related in Section \ref{sec:extensions}. Then we present our main results in Section \ref{resultados-principales}. 

\begin{Teo} (Theorem \ref{theorem-all-triangles})
All non-trivial triangles with indecomposable extreme terms in the cluster category of type $D$ can be obtained via (punctured) skein relations.
The triangles have the form
\[ X_\alpha \to X_C \to X_\beta \to X_{\alpha} [1], \]
where $C$ is a multicurve in $\alpha^+ \beta$.
\end{Teo}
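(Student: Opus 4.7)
The plan is to combine the complete list of non-split extensions over a fixed cluster-tilted algebra of type $D$ obtained in Section~\ref{section type-D-cluster}, the skein-relation computations recalled in Section~\ref{sec:skein-rel}, and the dictionary between extensions and smoothings set up in Section~\ref{sec:extensions}. The starting point is the observation that in the $2$-Calabi--Yau category $\Cc_D$ a non-trivial triangle with indecomposable extreme terms $X_\alpha$ and $X_\beta$ corresponds to a one-dimensional space $\Ext^1(X_\beta,X_\alpha)$, and that such a pair exists precisely when $\alpha$ and $\beta$ are tagged arcs that cross, either transversally in the interior of the punctured disk or at the puncture in the tagged sense.

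I would then split the argument by type of crossing. First, I would go through the list of pairs $(\alpha,\beta)$ arising from the distinguished triangulation $T$ fixed in Section~\ref{section type-D-cluster}. For each such pair, the classification of Section~\ref{section type-D-cluster} gives an explicit middle term $M$, and by invoking the dictionary of Section~\ref{sec:extensions} I would check, case by case, that the indecomposable summands of $M$ are exactly the isotopy classes appearing in the multicurve $\alpha^+\beta$. In the degenerate configurations (tangential crossings at the puncture, or pairs involving two notched copies of the same underlying arc) the smoothing $\alpha^+\beta$ produces either a loop enclosing the puncture or a self-intersecting curve; here I would apply the punctured skein identities of Section~\ref{sec:skein-rel} to rewrite the corresponding element as a sum/product of cluster variables indexed by genuine tagged arcs, and match this with $M$.

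To extend the conclusion from this particular triangulation $T$ to \emph{all} pairs of indecomposables in $\Cc_D$, I would use that $\Cc_D$ has finitely many $\tau$-orbits and that flips of triangulations correspond to mutations, so every non-trivial triangle with indecomposable extreme terms is obtained from one of the pairs listed for $T$ by an autoequivalence that preserves both the indexing by tagged arcs and the smoothing operation $\alpha^+\beta$. Thus the triangles with indecomposable endpoints are exhausted by triangles of the form
\[ X_\alpha \to X_C \to X_\beta \to X_\alpha[1], \qquad C \subseteq \alpha^+\beta, \]
as claimed.

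The main obstacle I anticipate is the punctured-crossing case. There the naive smoothing $\alpha^+\beta$ is not a disjoint union of tagged arcs, and the identification of its class with the middle term of the extension requires the cluster algebra identities (the ones involving the symbol $\cup$ in Figure~2 of the introduction). The delicate point is to verify that the resulting decomposition coincides with the one predicted by Section~\ref{section type-D-cluster}, in particular that the notchings of the endpoints at the puncture on the arcs appearing in $C$ are exactly those forced by the original tagging of $\alpha$ and $\beta$. Once this local check is completed, the remaining (transverse interior) cases follow from the already known unpunctured skein picture of \cite{CSh}.
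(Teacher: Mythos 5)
Your proposal follows the same broad strategy as the paper (classify extensions over a fixed hereditary algebra, lift them to triangles, match middle terms with smoothings, then argue that this exhausts all triangles) but there are two points that need repair.

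First, your opening claim that ``a non-trivial triangle with indecomposable extreme terms \(X_\alpha\) and \(X_\beta\) corresponds to a one-dimensional space \(\Ext^1(X_\beta,X_\alpha)\)'' is, at best, misleading. In the type \(D\) cluster category the crossing number \(e(\alpha,\beta)\) can equal \(2\), and \(\dim_K\Ext^1_{\Cc}(X_\beta,X_\alpha)=e(\alpha,\beta)\). The two-dimensional case is the centre of gravity of the whole argument: it is where the new extension of Lemma~\ref{new-sequences} lives, where Lemma~\ref{middle-terms} has to enumerate four possible middle terms (s1)--(s4), and where the punctured skein identity \(2x_\alpha x_\beta = x_{\alpha^-\beta,x}+x_{\alpha^-\beta,y}+x_{\alpha^+\beta,x}+x_{\alpha^+\beta,y}\) of Proposition~\ref{Propo1}(b) is needed. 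Writing ``gives an explicit middle term \(M\)'' and ``the multicurve \(\alpha^+\beta\)'' in the singular hides exactly this subtlety: when \(e(\alpha,\beta)=2\), the set \(\alpha^+\beta\) contains four multicurves (cf.\ the remark before Figure~\ref{localsmoo}), and a case-by-case check has to account for all four. Without recognizing this, the case analysis cannot be completed.

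Second, the reduction from the fixed triangulation \(\mathcal{T}\) of Section~\ref{section type-D-cluster} to arbitrary pairs of indecomposables should rest on Lemma~\ref{lema-pla}: for any indecomposable \(A\) one can choose a cluster tilting object whose associated slice contains \(A\), so that \(A\) becomes a projective \(P_i\) under the equivalence \(\Hom(T,-)\), and the extensions (1)--(9) exhaust the possibilities. You invoke ``flips of triangulations correspond to mutations''; but mutating the cluster tilting object is a different operation from rotating the arcs, and it does not directly give an autoequivalence sending one pair \((\alpha,\beta)\) to another. What does the job is the Auslander--Reiten translate \(\tau\), which acts geometrically by rotating the disk (Figure~\ref{categoria}) and therefore carries crossing patterns and smoothings to congruent ones. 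If you replace the mutation language with \(\tau\)-orbits, or better, cite Lemma~\ref{lema-pla} directly, that part of the argument goes through. Your identification of the punctured-crossing case as the delicate point is correct and matches the paper's emphasis.
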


We will use this theorem to find explicit non-split extensions. In the case of Jacobian algebras arising from the punctured disk, in the next proposition we find all the explicit non-split extensions with indecomposable extreme terms. The next result holds in general for cluster categories arising from surfaces for an adequate $C$ such that the triangle $ X_\alpha \to X_C \to X_\beta \to X_{\alpha} [1] $ exists. We include the proof, based on Lemma \ref{lemma-existe-sec}, since were not able to find it in the literature. 

\begin{Pro} (Proposition \ref{prop:preserva-dimension}) Let $\alpha$ and $\beta$ be arcs on the (punctured) disk such that $e(\alpha, \beta) \geq 1$. Let $ X_\alpha \to X_C \to X_\beta \to X_{\alpha} [1] $ be a non-split triangle with indecomposable extremes $X_\alpha$ and $X_\beta$ in the cluster category, where $C \in \alpha^+ \beta$, and let $\mathcal{T}$ be a triangulation. 
Then there is a non-split extension 
\[ 0 \to M_{\alpha} \to M_{C} \to M_{\beta} \to 0 \]
if and only if $d(C) = d(\{ \alpha, \beta\})$,  where $d$ denotes the total number of  crossings of the multicurve with the triangulation.\end{Pro}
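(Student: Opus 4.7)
I plan to handle the two directions separately, with the forward direction being essentially dimensional and the backward direction requiring an application of the cluster functor. Both rely on the standard identity $\dim M_\gamma = d(\gamma)$, valid for any arc $\gamma$ (and extended additively to multicurves), together with the convention that $M_\gamma = 0$ for $\gamma \in \mathcal{T}$. For the ``only if'' direction, if $0 \to M_\alpha \to M_C \to M_\beta \to 0$ is a short exact sequence, then additivity of total dimension forces $\dim M_C = \dim M_\alpha + \dim M_\beta$, which translates immediately into $d(C) = d(\{\alpha,\beta\})$.

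For the ``if'' direction, I would apply the cluster functor $F = \Hom_{\Cc}(T,-)$, where $T$ is the cluster-tilting object corresponding to $\mathcal{T}$, to the triangle provided by Theorem \ref{theorem-all-triangles}:
\[ X_\alpha \longrightarrow X_C \longrightarrow X_\beta \longrightarrow X_\alpha[1]. \]
Since $F$ is cohomological and factors through the standard equivalence $\Cc/[T[1]] \simeq \modu B$, this yields a long exact sequence whose central segment is $M_\alpha \to M_C \to M_\beta$. The hypothesis $d(C) = d(\{\alpha,\beta\})$ translates into $\dim M_C = \dim M_\alpha + \dim M_\beta$, which leaves no room for the kernel on the left or the cokernel on the right to be nonzero; hence the central segment is in fact a short exact sequence. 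Non-splitness of the resulting sequence is inherited from non-splitness of the triangle, since any splitting of the sequence would descend from a splitting in $\Cc/[T[1]]$, contradicting both the indecomposability of $X_\alpha$ and $X_\beta$ and the fact that these objects do not lie in $\add T[1]$.

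The principal obstacle I anticipate is the bookkeeping for components of the multicurve $C$. Components in $\mathcal{T}$ contribute $0$ to both sides of the dimension identity and cause no trouble; the real subtlety lies with components whose cluster-category representative sits in $\add T[1]$. Such components are killed by $F$ yet still contribute positively to $d(C)$, and the equality $d(C) = d(\{\alpha,\beta\})$ is exactly the combinatorial statement that rules out this pathology. The technical heart of the argument will therefore be to render this geometric-to-algebraic dictionary precise and to deduce from it the vanishing of the boundary terms in the long exact sequence produced by $F$.
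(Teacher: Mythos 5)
Your proof is correct and takes essentially the same route as the paper: the paper's (very terse) argument cites exactly the ingredients you reconstruct, namely applying the cohomological functor $\Hom_{\Cc}(T,-)$ to the triangle from Theorem~\ref{theorem-all-triangles}, the dimension-counting Lemma~\ref{lemma-existe-sec} which kills the kernel and cokernel in the long exact sequence, and the dictionary $\dim_K M_\gamma = d(\gamma)$ from Remark~\ref{rem-vectordim}. Your closing paragraph correctly identifies the one genuine subtlety the paper leaves implicit: the identity $\dim_K M_\gamma = d(\gamma)$ fails for a component $\gamma_j$ of $C$ with $X_{\gamma_j}\in\add T[1]$ (there $M_{\gamma_j}=0$ while $d(\gamma_j)>0$), so the ``only if'' direction uses that smoothing does not increase crossings with $\mathcal{T}$, which gives $\dim M_C \le d(C) \le d(\{\alpha,\beta\})$ and forces equality throughout; this is the precise sense in which your anticipated ``obstacle'' is ruled out, and it is worth making the inequality $d(C)\le d(\{\alpha,\beta\})$ explicit if you were to write this up in full.
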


We conclude with a list of examples over Jacobian algebras arising from the punctured disk and over other surfaces in Section \ref{EXAMPLES}.

\section{Triangles in the cluster category of type $D$}\label{section type-D-cluster}

In this section we find all the non-split extensions for a particular orientation of a quiver $Q$ of type $D$. That is, all the non split extensions with indecomposable extremes in the category of representations $\rep Q$. Then we use those extensions to find all the non-split triangles in the cluster category of type $D$. The reader is referred to \cite{ASS,S14} for basic definitions that will be used in this section.

\subsection{Non-split extensions over the hereditary algebra}\label{hereditary}

Let $H$ be the hereditary algebra $ K Q$ where $Q$ is some orientation of the $D_n$ quiver ($n \geq 4$)

\begin{center}
\begin{tikzcd}n \arrow[dash]{rd} \\  
& n-2 \arrow[dash]{r} & \cdots \arrow[dash]{r} & 1 \\
n-1 \arrow[dash]{ru}  \end{tikzcd}
\end{center}

The algebra $H$ is hereditary and representation-finite. A basis for all non-split extensions between indecomposable modules, when $H$ is as above or it is of type $A$ (i.e. $Q$ is a linear graph) was given in \cite{Br}. Moreover almost all non-split extensions were described in the article. 
In the next lemma we find another non-split extension.

From now on let $Q$ be the type $D_n$ quiver, taking an orientation for the graph above where \emph{all the arrows point right}, and consider $K$ an algebraically closed field of $\mathrm{char} K \neq 2$.

\begin{lemma}\label{new-sequences} There exist a non-split extension between indecomposable representations in $\mmod H$ given in Figure \ref{sequence}.
\end{lemma}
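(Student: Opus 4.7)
The plan is to establish the lemma by constructing the short exact sequence of Figure~\ref{sequence} vertex by vertex as representations of $Q$, and then verifying three things in turn: exactness, indecomposability of each of the three terms, and non-splitness. Because $H = KQ$ is hereditary and representation-finite of Dynkin type $D_n$, every indecomposable has dimension vector a positive root of $D_n$; the positive roots whose coordinates are all $0$ or $1$ give rise to string-type (interval) representations on a type~$A$ subquiver, while roots with a coordinate equal to $2$ give rise to representations that are supported on both of the ears $n-1$ and $n$ simultaneously and that carry a $2$-dimensional space at a connected range of vertices along the tail. The middle term of the sequence will be of the second kind, and this is where the hypothesis $\cha K \neq 2$ plays its role.

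First, I would unpack Figure~\ref{sequence} by writing down the three indecomposable representations $M_\alpha, M_\gamma, M_\beta$ as vector-space assignments at each vertex, together with the linear structure maps along each arrow of $Q$. I would fix a compatible basis at every vertex of dimension $2$ so that the images of the two ears $n-1, n$ under the structure maps land along the two coordinate axes at the trivalent vertex $n-2$, and propagate to the tail by the identity in that basis. In these coordinates, the morphisms $f \colon M_\alpha \to M_\gamma$ and $g \colon M_\gamma \to M_\beta$ of the proposed sequence become the canonical inclusion into, and projection out of, the chosen basis. Compatibility with the arrows of $Q$ is then a direct check, and exactness of the resulting sequence follows from a dimension count at every vertex together with the immediate equality $\mathrm{Im}(f) = \ker(g)$.

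Indecomposability of the outer terms $M_\alpha$ and $M_\beta$ is standard, since each is a thin representation of a type~$A$ subquiver and therefore has endomorphism ring $K$. The main obstacle, and the step where the assumption $\cha K \neq 2$ is expected to enter, is to prove that the middle term $M_\gamma$ is indecomposable. I would do this by computing $\End_H(M_\gamma)$ directly: an endomorphism is a family of linear maps at each vertex commuting with every structure map; the two sources $n$ and $n-1$ force the matrix at the trivalent vertex $n-2$ to be diagonal in the chosen basis, while the relations along the tail propagate and normalize these entries, via a small change of basis at the $2$-dimensional vertices that requires inverting $2$. This computation yields $\End_H(M_\gamma) = K$, so $M_\gamma$ is indecomposable. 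Non-splitness is then automatic, since any splitting of $g$ would force $M_\gamma \cong M_\alpha \oplus M_\beta$, contradicting indecomposability.
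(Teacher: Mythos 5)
Your proposal rests on a structural misreading of Figure~\ref{sequence} that would cause the argument to fail. You plan to prove that the middle term $M_\gamma$ is indecomposable by computing $\End_H(M_\gamma)=K$, and then to deduce non-splitness from that. But the middle term in the paper's figure is \emph{not} indecomposable: it has $K^3$ at every vertex between $s$ and $i$, and since the coordinates of a positive root of $D_n$ are bounded by $2$, a dimension vector with a $3$ cannot be a root; hence the middle term must decompose. The paper records exactly this in Lemma~\ref{middle-terms}, where the relevant middle term is $Y_4 = X \oplus X_3$, a direct sum of two indecomposables. So the endomorphism-ring computation you outline could not yield $\End = K$, and the step ``non-splitness follows because a split sequence would make $M_\gamma \cong M_\alpha \oplus M_\beta$, contradicting indecomposability'' collapses, because $M_\gamma$ is decomposable to begin with.

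A second, related error: you assert that both outer terms are thin representations of a type~$A$ subquiver. Only the kernel term is thin. The cokernel $M_\beta$ has dimension $2$ at a range of vertices (it corresponds to a long root of $D_n$ supported on both ears), so it is not thin, though it is still indecomposable; your indecomposability argument for that term needs to be the long-root argument rather than the thin-string argument. Finally, the hypothesis $\cha K \neq 2$ does not enter through indecomposability of the middle term: it is used only so that the explicit matrices $f_k$ containing the entry $\tfrac12$ (which are the chosen representative of this particular class in the two-dimensional $\Ext$ space) make sense. The correct route to non-splitness is the one the paper implicitly relies on: exhibit the sequence, verify exactness at each vertex, and observe that the middle term $X \oplus X_3$ is not isomorphic to $M_\alpha \oplus M_\beta$ (their indecomposable summands, equivalently their dimension vectors, differ), so the sequence cannot split.
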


\begin{proof} The non-split extension of indecomposable representations is presented in the next figure, where $1 < r < s < i < n-1$, $I_n$ is the $n \times n$ identity matrix, and the subscripts for each $K^j$ indicate the vertex in the oriented quiver.

\begin{figure}[H]
{\hspace*{-3em}{\begin{tikzpicture}[scale=0.9]
%Primer término
\draw (-1.5,3.5)node{$0$};
\draw (-1.5,6.5)node{$0$};
\draw (0,5)node{$0$};
\draw (1.7,5)node{$0$};
\draw (2.25,5)node{$\cdots$};
\draw (2.8,5)node{$0$};
\draw (4.25,5)node{$K_{i}$};
\draw (5.75,5)node{$K$};
\draw (6.3,5)node{$\cdots$};
\draw (6.9,5)node{$K_{s}$};
\draw (8.4,5)node{$K$};
\draw (9.75,5)node{$K$};
\draw (10.25,5)node{$\cdots$};
\draw (11.05,5)node{$K_{r+1}$};
\draw (12.65,5)node{$K_{r}$};
\draw (13.25,5)node{$\cdots$};
\draw (13.65,5)node{$K_{2}$};
\draw (15,5)node{$K_{1}$};
\draw[->][line width=1pt] (-1.25,3.75) -- (-0.4,4.75); 
\draw[->][line width=1pt] (-1.4,6.25) -- (-0.4,5.25);
\draw[->][line width=1pt] (0.4,5) -- (1.25,5); 
 \draw[->][line width=1pt] (3.05,5) -- (4,5); 
 \draw[->][line width=1pt] (4.55,5) -- (5.5,5)
node[pos=0.5,above] {$1$};      
 \draw[->][line width=1pt] (7.3,5) -- (8.15,5)
node[pos=0.5,above] {$1$};     
 \draw[->][line width=1pt] (8.7,5) -- (9.54,5)
node[pos=0.5,above] {$1$};     
 \draw[->][line width=1pt] (11.45,5) -- (12.25,5)
node[pos=0.5,above] {$1$};     
 \draw[->][line width=1pt] (13.95,5) -- (14.8,5)
node[pos=0.5,above] {$1$};

%suma directa de rojo y dorado
\draw (-1.5,-1.5)node{$K$};
\draw (-1.5,1.5)node{$K$};
\draw (0,0)node{$K^{2}$};
\draw (1.5,0)node{$K^{2}$};
\draw (2.05,0)node{$\cdots$};
\draw (2.65,0)node{$K^{2}_{p}$};
\draw (4.25,0)node{$K^{3}_{i}$};
\draw (5.6,0)node{$K^{3}$};
\draw (6.2,0)node{$\cdots$};
\draw (6.7,0)node{$K^{3}_{s}$};
\draw (8.45,0)node{$K^{2}$};
\draw (9.85,0)node{$K^{2}$};
\draw (10.5,0)node{$\cdots$};
\draw (11.15,0)node{$K^{2}_{r+1}$};
\draw (12.65,0)node{$K_{r}$};
\draw (13.25,0)node{$\cdots$};
\draw (13.65,0)node{$K_{2}$};
\draw (15,0)node{$K_{1}$};
\draw[->][line width=1pt] (-1.25,-1.25) -- (-0.4,-0.25)
node[pos=0.5,below] {{\tiny$\begin{bmatrix}
1\\
1
\end{bmatrix}$}};
\draw[->][line width=1pt] (-1.4,1.25) -- (-0.4,0.25)
node[pos=0.5,above] {{\tiny$\begin{bmatrix}
1\\
0
\end{bmatrix}$}};
\draw[->][line width=1pt] (0.3,0) -- (1.25,0)
node[pos=0.5,above] {$I_{2}$}; 
 \draw[->][line width=1pt] (3,0) -- (4,0)
node[pos=0.5,above] {{\tiny$\begin{bmatrix}
0&0\\
1&0\\
0&1
\end{bmatrix}$}};
 \draw[->][line width=1pt] (4.5,0) -- (5.35,0)
node[pos=0.5,above] {$I_{3}$}; 
 \draw[->][line width=1pt] (7.05,0) -- (8.2,0)
node[pos=0.5,above] {{\tiny$\begin{bmatrix}
0&1&0\\
0&0&1
\end{bmatrix}$}};
 \draw[->][line width=1pt] (8.75,0) -- (9.54,0)
node[pos=0.5,above] {$I_{2}$};  
 \draw[->][line width=1pt] (11.6,0) -- (12.35,0)
node[pos=0.5,above] {{\tiny$\begin{bmatrix}
1&0\\

\end{bmatrix}$}};  
 \draw[->][line width=1pt] (13.95,0) -- (14.8,0)
node[pos=0.5,above] {$1$};

%tercer término
\draw (-1.5,-6.5)node{$K$};
\draw (-1.5,-3.5)node{$K$};
\draw (0,-5)node{$K^{2}$};
\draw (1.7,-5)node{$K^{2}$};
\draw (2.45,-5)node{$\cdots$};
\draw (3,-5)node{$K^{2}_{p}$};
\draw (4.3,-5)node{$K^{2}_{i}$};
\draw (5.8,-5)node{$K^{2}$};
\draw (6.5,-5)node{$\cdots$};
\draw (7.1,-5)node{$K^{2}_{s}$};
\draw (8.6,-5)node{$K$};
\draw (10.2,-5)node{$K$};
\draw (10.85,-5)node{$\cdots$};
\draw (11.25,-5)node{$0$};
\draw (12.65,-5)node{$0$};
\draw (13.25,-5)node{$\cdots$};
\draw (13.65,-5)node{$0$};
\draw (15,-5)node{$0$};

\draw[->][line width=1pt] (-1.25,-6.25) -- (-0.4,-5.25)
node[pos=0.5,below] {{\tiny$\begin{bmatrix}
1\\
1
\end{bmatrix}$}};
\draw[->][line width=1pt] (-1.4,-3.75) -- (-0.4,-4.75)
node[pos=0.5,above] {{\tiny$\begin{bmatrix}
1\\
0
\end{bmatrix}$}};
\draw[->][line width=1pt] (0.4,-5) -- (1.25,-5)
node[pos=0.5,above] {$I_{2}$}; 
 \draw[->][line width=1pt] (3.25,-5) -- (4,-5)
node[pos=0.5,above] {$I_{2}$};  
 \draw[->][line width=1pt] (4.6,-5) -- (5.45,-5)
node[pos=0.5,above] {$I_{2}$}; 
 \draw[->][line width=1pt] (7.5,-5) -- (8.35,-5)
node[pos=0.5,below] {{\tiny$\begin{bmatrix}
1&0
\end{bmatrix}$}};
 \draw[->][line width=1pt] (8.9,-5) -- (9.74,-5)
node[pos=0.5,above] {$1$};   
 \draw[->][line width=1pt] (11.45,-5) -- (12.25,-5)
node[pos=0.5,above] {$0$};     
 \draw[->][line width=1pt] (13.95,-5) -- (14.8,-5)
node[pos=0.5,above] {$0$};

\draw[->][line width=1pt] (6.7,4.5) -- (6.7,0.5)
node[pos=0.4,right] {\scriptsize{$f_{k}= \left\{ \begin{array}{lcc}
             1 &   if  & 1 \leq k \leq r  \\
             \\ \begin{bmatrix}
1\\
\frac{1}{2}
\end{bmatrix} &  if &  r+1 \leq k \leq s \\
             \\ \begin{bmatrix}
\frac{1}{2}\\
1\\
\frac{1}{2}
\end{bmatrix} &  if  & s+1 \leq k \leq i\\
	 &     &    \\
	0 &   if  &  i+1\leq k \leq n
             \end{array}
   \right.$}};

\draw[->>][line width=1pt] (6.7,-0.5) -- (6.7,-4.5)
node[pos=0.5,right] {\scriptsize{$g_{k}= \left\{ \begin{array}{lcc}
             0 &   if  & 1 \leq k \leq r  \\
             \\ \begin{bmatrix}
1&-2
\end{bmatrix} &  if &  r+1 \leq k \leq s \\
             \\ \begin{bmatrix}
0&1&-2\\
1&0&-1
\end{bmatrix} &  if  & s+1 \leq k \leq i\\
\\ \begin{bmatrix}
1&-2\\
0&-1
\end{bmatrix} &  if  & i+1 \leq k \leq n-2\\
	&    &      \\
	 1&  if    & k=n-1    \\
	-1&   if  &  k=n
             \end{array}
   \right.$}};
\end{tikzpicture}}}
\caption{Non-split extension Lemma \ref{new-sequences}}
\label{sequence}
\end{figure}
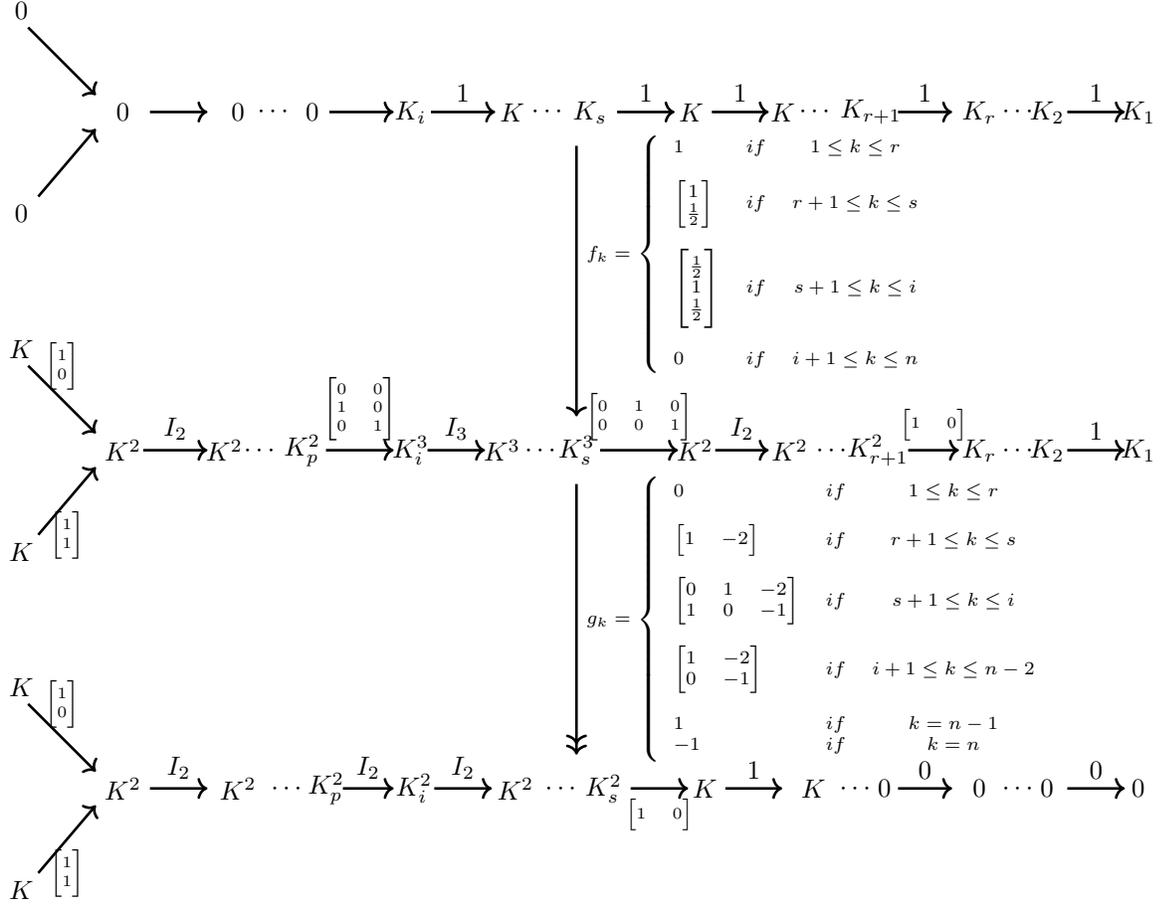
\end{proof}

In \cite{Br} the author finds a representative for all one dimensional $\Ext$ spaces and two linearly independent representatives when $\Ext_H(N,M)$ is of dimension two, plus another extension that is linearly dependent with them. 

While the middle term is already settled when the dimension of $\Ext_H(N,M)$ is one, we see that there are four possible middle terms for a non-split extension when $\Ext_H(N,M)$ is of dimension two. The extension not mentioned in \cite[Theorem 2.2 (b)]{Br}, appears in Figure \ref{sequence}, Lemma \ref{new-sequences}.

For the next lemma the following notions are important. Let $M,Y$ be indecomposable modules. It is known that $H$ is of finite representation type and the Auslander--Reiten quiver $\Gamma(H)$ is a standard component, so a morphism $M \to Y$ is a composition of paths in $\Gamma (H)$ modulo mesh relations.

\begin{lemma}\label{middle-terms} Let $M_i$ and $M_\beta$ be indecomposable $H$-modules such that $\dim_K \Ext_H(M_\beta, M_i) =2$, the middle term in a non-split sequence
\[ 0 \to M_i \to Y_j \to M_\beta \to 0 \] has to be one of the following, where the middle term summands appear in the Auslander--Reiten quiver in Figure \ref{m-t}.
\begin{enumerate}
\item[s1)] $Y_1 = A_1 \oplus B_1 \oplus X_3$ (or $Y_1 = A_1 \oplus B_2 \oplus X_3$)
\item[s2)] $Y_2 = A_2 \oplus B_2 \oplus X_3$ (or $Y_2 = A_2 \oplus B_1 \oplus X_3$)
\item[s3)] $Y_3 = X_1 \oplus X_2$
\item[s4)] $Y_4 = X \oplus X_3$
\end{enumerate}
\end{lemma}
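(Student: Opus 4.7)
The plan is to reduce the classification of middle terms to a finite case analysis driven by the structure of $\mathrm{Ext}^1_H(M_\beta,M_i)$ together with Brüstle's basis description \cite{Br} and the extension produced in Lemma \ref{new-sequences}. The first step is to pin down exactly which pairs $(M_\beta,M_i)$ satisfy $\dim_K\mathrm{Ext}^1_H(M_\beta,M_i)=2$. Using the bijection between indecomposables and positive roots of $D_n$, together with the knitting of $\Gamma(H)$, one checks that the $2$-dimensional Ext only occurs when both supports straddle the branching vertex in a specific way, giving the parameters $1<r<s<i<n-1$ of Figure \ref{sequence}. In particular the dimension vector $\underline{\dim}\, M_i+\underline{\dim}\, M_\beta$ is completely determined by $(r,s,i)$.

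Next, I would enumerate the possible decomposable modules $Y$ realizing this dimension vector. Because $H$ is representation-finite and the indecomposables have dimension vectors with entries in $\{0,1,2\}$, only finitely many ordered tuples of indecomposables can sum correctly; this cuts the candidate list down to the ones appearing in the list of $Y_j$. Then I would invoke \cite[Theorem 2.2(b)]{Br}: it furnishes two linearly independent extension classes whose middle terms are $Y_1$ and $Y_2$ (the two splittings reflect the two possible choices of $B_1$ vs.\ $B_2$ recorded in the statement). A generic linear combination of these two classes produces a module whose summand profile changes, and I would carry out the pushout/pullback computation to identify the resulting middle term as $Y_3=X_1\oplus X_2$. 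The remaining case $Y_4=X\oplus X_3$ is precisely the module realized in Figure \ref{sequence} of Lemma \ref{new-sequences}, so that non-split sequence supplies the missing extension.

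To prove exhaustiveness, I would use the $\mathbb{P}(\mathrm{Ext}^1_H(M_\beta,M_i))\cong\mathbb{P}^1$ parametrization of extension classes up to scalar, modulo the free action of $\mathrm{Aut}(M_i)\times\mathrm{Aut}(M_\beta)$ on short exact sequences. Since this action can only identify finitely many isomorphism classes of middle term along the $\mathbb{P}^1$, it suffices to show that outside the two special points corresponding to $Y_1$ and $Y_2$ (and the single special point corresponding to $Y_4$) the middle term is constant, equal to $Y_3$. I would establish this by computing the middle term over a generic scalar combination of the two basis classes and checking, by a direct rank computation on the defining matrices in Figure \ref{sequence}, that the middle term decomposes exactly as $X_1\oplus X_2$ for all but finitely many scalars. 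Comparing cardinalities then forces the classification to be exactly s1)--s4).

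The main obstacle will be the explicit identification of the generic middle term: one must track how the $K$-linear data defining the extension (entries of the matrices in Figure \ref{sequence} and its analogue from \cite{Br}) determine the indecomposable summands of $Y$. The hypothesis $\mathrm{char}\,K\neq 2$ enters here, since the scalar $\tfrac{1}{2}$ appearing in the maps $f_k$ and $g_k$ of Lemma \ref{new-sequences} must be invertible in order for the sequence to be non-split and for the pushout computation to yield a valid change of summand profile. Once that computation is carried out, the statement follows by combining it with Brüstle's basis and the $\mathbb{P}^1$-counting argument.
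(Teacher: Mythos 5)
Your $\mathbb{P}^1$-deformation plan is a genuinely different route from the paper's, which works structurally: after reducing to $M_i=P(i)$, the paper uses projective covers/injective envelopes and the snake lemma to produce an epimorphism $P(i,n-1,n)\twoheadrightarrow Y$ and a monomorphism $Y\hookrightarrow I(1,r+1,s+1)$, thereby constraining $\ttop Y$ and $\soc Y$; together with $\Hom(M_i,Y)\neq 0$, $\Hom(Y,M_\beta)\neq 0$ and $\udim Y=\udim M_i+\udim M_\beta$, a direct inspection of the Auslander--Reiten quiver then leaves only the four listed $Y_j$. However, your argument as written has two gaps that prevent it from closing.

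First, the assertion that matching $\udim Y=\udim M_i+\udim M_\beta$ already ``cuts the candidate list down to the ones appearing in the list of $Y_j$'' is not correct: for a Dynkin quiver there are in general many ways to write a given dimension vector as a sum of positive roots (including $\udim M_i+\udim M_\beta$ itself), so the dimension vector alone does not isolate the four decompositions. Some additional constraint on the summands --- which is exactly what the paper's $\soc$/$\ttop$ and $\Hom$-nonvanishing conditions supply --- is required. Second, the exhaustiveness step does not follow from ``comparing cardinalities.'' It is true that $\dim_K\End Y$ is upper semicontinuous on $\mathbb{P}(\Ext_H(M_\beta,M_i))$ and hence a generic middle term exists, but this says nothing about how many special classes there are nor which middle terms occur at them; moreover, since $Y_3$ and $Y_4$ each have two summands you cannot identify the generic term by counting summands, so you would need to compute $\dim_K\End$ for both and then actually exhibit every class where the middle term jumps to be sure no additional middle term has been overlooked. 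Without those explicit computations the argument remains a sketch, whereas the paper's socle/top reduction is the tighter, self-contained one.
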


\begin{remark} This result is also illustrated in an example for type $D_5$ in \cite{S14}, Section 3.3.4.2.
\end{remark}

\begin{proof}
Consider $\Gamma(H)$ the Auslander--Reiten quiver. Let $M_i$ be the projective $P(i)$, this choice makes the argument easier but $M_i$ can be any indecomposable module such that the two dimensional extension exists as we are using compositions of irreducible morphisms. 

The extensions (s1)-(s3) were mentioned in \cite{Br}, and (s4) appears in Lemma \ref{new-sequences}. We want to show that those four are the only possible non-split extensions. Let $M_\beta$ be an indecomposable such that $\dim_K \Ext_H(M_\beta, M_i) =2$, and let $0 \to M_i \to Y \to M_\beta \to 0 $ be a non-split extension. See the relative positions of $M_i$ and $M_\beta$ in Figure \ref{m-t} (a). Observe that depending on the positions of $M_i$ and $M_\beta$, the sectional paths $M_i \to X_3$ and $X_3 \to M_\beta$ may not exist, in that case set $X_3 \simeq 0$.

\begin{figure}[h!]
\centering
\def\svgwidth{5.5in}
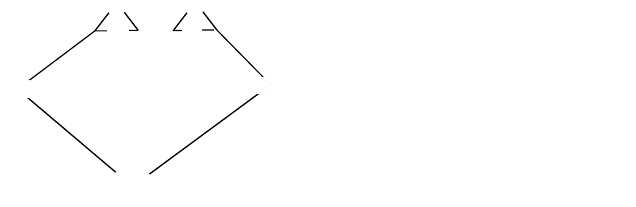
\caption{Possible middle terms for the extension. Lemma \ref{middle-terms}.}
\label{m-t}
\end{figure}

\noindent
In Figure \ref{m-t}
 (b) we display for each slot the indices corresponding to the projective cover and injective envelope. For instance the label $\begin{psmallmatrix} n-1 & n \\ r+1 & s+1\end{psmallmatrix}$ in the extreme right of Figure \ref{m-t} (b) indicates that $P(n-1) \oplus P(n)$ is the (minimal) projective cover for $M_{\beta}$ and $I(r+1) \oplus I(s+1)$ is the injective envelope. This information can be deduced from the position of $M_\beta$ in $\Gamma(H)$.
 
We denote the direct sum of indecomposable injective modules $I(a_1) \oplus \cdots \oplus I(a_m)$ by $I(a_1,\ldots,a_m)$, similarly denote $P(a_1, \ldots, a_m)$ the direct sum of indecomposable projective modules. Consider the projective cover and injective envelope of $M_i$ and of $M_\beta$. 

While $M_i$ is isomorphic to its projective cover, $P(n-1,n)$ is the projective cover for $M_\beta$.  By the properties of projective modules and using snake lemma, we can build a commutative diagram where the vertical lines are epimorphisms. 
\begin{center}
\begin{tikzcd}
0 \arrow{r} & P(i) \arrow[twoheadrightarrow]{d} \arrow{r} & P(i,n-1,n) \arrow[twoheadrightarrow]{d}\arrow{r} & P(n-1,n)\arrow[twoheadrightarrow]{d} \arrow{r}
& 0 \\
0 \arrow{r} & M_i  \arrow{r} & Y  \arrow{r} & M_\beta \arrow{r}  & 0  
\end{tikzcd}
\end{center}

Similarly, by properties of injective modules and using snake lemma, we can build a commutative diagram where the vertical lines are monomorphisms. 

\begin{center}
\begin{tikzcd}
0 \arrow{r} & M_i \arrow[hookrightarrow]{d}  \arrow{r} & Y \arrow[hookrightarrow]{d} \arrow{r} & M_\beta \arrow{r} \arrow[hookrightarrow]{d} & 0  \\
0 \arrow{r} & I(1) \arrow{r} & I(1,r+1,s+1) \arrow{r} & I(r+1,s+1) \arrow{r}
& 0
\end{tikzcd}
\end{center}

Hence, since there is an epimorphism from $P(i,n-1,n)$ to $Y$ and there is a monomorphism from $Y$ to $I(r+1,s+1)$, the middle term has  to satisfy:
\begin{enumerate}
\item[i.] $\soc Y $ is a direct summand of $S(1)\oplus S(r+1) \oplus S(s+1)$, and $\ttop Y$ is a direct summand of  $S(i) \oplus S(n-1) \oplus S(n)$, 

\item[ii.] $\Hom (M_i,Y)\neq 0$ and $\Hom (Y,M_\beta)\neq 0$, and
\item[iii.] $\udim Y = \udim M_i + \udim M_\beta $.

\end{enumerate} 
If we write the semisimple $\ttop P(M) $ and $\soc I(M)$ for each indecomposable $M$ in $\Gamma (H)$, we obtain a pattern depicted partially in Figure \ref{m-t} (b) and we see that the $\ttop$ and $\soc$ conditions (i) are satisfied only by the indecomposables appearing in Figure \ref{m-t} (a). We have $\udim Y$ and $\udim M_i + \udim M_\beta$. It is easy to see that the sequences (s1)-(s4) are the only ones whose middle terms satisfy this Thus, these are the only non-split short exact sequences having extremes $M_i, M_{\beta}$.\end{proof}

\subsection{Cluster category}\label{sec:cluster triangles}

The cluster category of type $D_n$ was defined as  $ \mathcal{C} =\Dd^b ( \modu k Q) / \tau^{-1} [1]$ where $Q$ is an oriented quiver of type $D_n$ in \cite{BMRRT}. The cluster category is a particular case of a cluster category from a quiver with potential \cite{DWZ}, studied by several authors and introduced in \cite{Am}. The hereditary algebra $H$ in the previous section is a particular case of cluster-tilted algebra as in \cite{bmr}, and it is also a Jacobian algebra arising from a surface triangulation \cite{L}.

\smallskip

In \cite[Lemma 3.2]{P} it is proved that every extension in $\mmod H$ can be lifted to a triangle in $\Cc$. Applying the Auslander--Reiten translation, denoted by $\tau$, to all the triangles induced by the non-split extensions (always considered up to isomorphism) in Section \ref{hereditary}, we will obtain all the possible middle terms for triangles with indecomposable extremes in $\Cc$.

\begin{figure}[h!]
\centering
\def\svgwidth{5.6in}
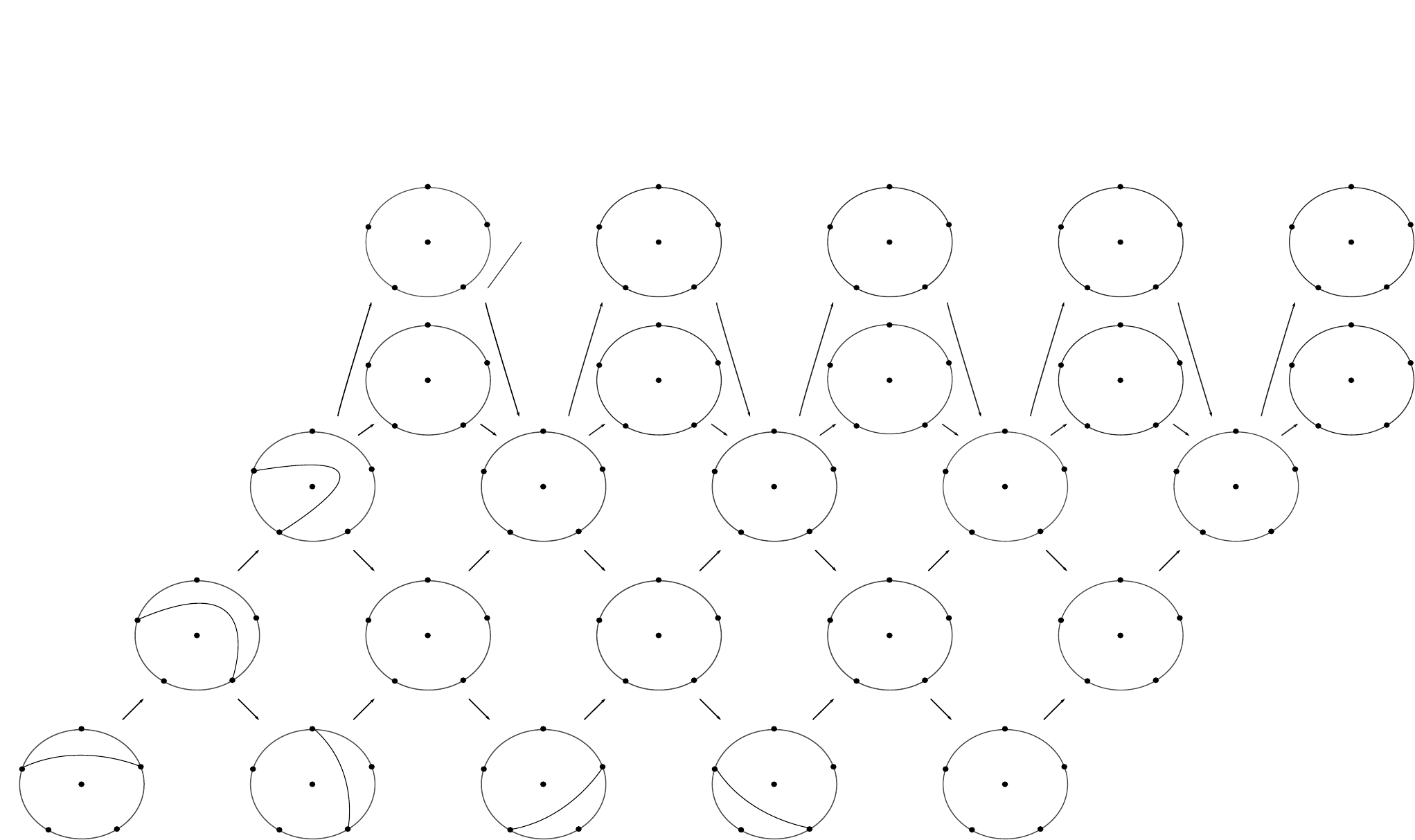
\caption{Category of tagged arcs on the punctured disk. For each arc $\gamma$, $\tau(\gamma)$ is the arc immediate on the left.}
\label{categoria}
\end{figure}

\begin{lemma}\label{lema-pla} Let $\Cc$ be a cluster category of Dynkin type. Every non-split triangle with indecomposable extremes is obtained lifting a non-split extension of the form $0\to P_i \to Y \to M \to 0$ on $\mmod H$ for a certain hereditary cluster-tilted algebra $H$.
\end{lemma}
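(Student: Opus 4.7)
The strategy exploits the Auslander--Reiten translation $\tau$, which is an auto-equivalence of $\Cc$ (indeed $[1] \cong \tau$ in $\Cc = \Dd^b(\mmod H)/\langle \tau^{-1}[1]\rangle$). Given a non-split triangle $X \to Y \to Z \to X[1]$ with $X$ and $Z$ indecomposable, applying $\tau^k$ yields another non-split triangle with indecomposable extremes. The plan is to find $k$ so that $\tau^k X = P_i$ is an indecomposable projective of $\mmod H$ and $\tau^k Z = M$ is an unshifted module; the original triangle will then be the $\tau^{-k}$-translate of the lift of a non-split extension $0 \to P_i \to \tau^k Y \to M \to 0$.

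In Dynkin type every indecomposable of $\Cc$ lies in the $\tau$-orbit of some projective, since iterating $\tau^{-1}$ reaches a projective from any indecomposable in the AR quiver of $\mmod H$, and the shifted projectives $P_j[1] = \tau P_j$ adjoined in passing to $\Cc$ are themselves $\tau$-translates of projectives. The main technical point is choosing $k$ so that simultaneously $\tau^k X$ is projective and $\tau^k Z$ is unshifted; within each $\tau$-orbit of $\Cc$ only a small fixed number of positions are shifted projectives, so a common $k$ exists by an orbit-counting argument carried out case-by-case in each Dynkin type (aside from certain degenerate situations where $Y = 0$, excluded by the non-trivial convention).

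With the alignment in place, the extension group in $\Cc$ is computed by the orbit-category formula:
\[
\Ext^1_\Cc(M, P_i) = \bigoplus_{n \in \Z} \Hom_{\Dd^b}(M, F^n P_i[1]), \qquad F = \tau^{-1}[1].
\]
Only the summand $n = 0$ survives, giving $\Ext^1_H(M, P_i)$: the summand $n = -1$ equals $\Hom_{\Dd^b}(M, \tau P_i) = \Hom_{\Dd^b}(M, I_i[-1]) = 0$ because the shift is in the wrong degree, and all other summands vanish by hereditariness of $H$. The given non-split triangle therefore corresponds to a nonzero element of $\Ext^1_H(M, P_i)$, and by \cite[Lemma 3.2]{P} it is the lift of the asserted non-split extension $0 \to P_i \to \tau^k Y \to M \to 0$ in $\mmod H$. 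The chief obstacle is the simultaneous orbit alignment in the second paragraph, which requires a careful check of how the shifted projective positions interact across the two $\tau$-orbits involved.
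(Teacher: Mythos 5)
Your argument is the dual of the paper's. The paper keeps the triangle $A \to B \to C \to A[1]$ fixed and moves the cluster-tilting object, choosing a slice $T$ through $A$ so that $A$ is a summand of $T$ and $\Hom(T,A)$ is projective; the equivalence $\Hom(T,-)\colon \Cc/(T[1])\to\mmod H$ from \cite{bmr} then yields the extension at once. You keep $T$ fixed and move the triangle, applying $\tau^k$ so that $\tau^k X$ lands on a projective position. Both strategies rest on the finiteness of Dynkin type, and both finish by quoting \cite[Lemma 3.2]{P}. Your orbit-category computation of $\Ext^1_\Cc(M,P_i)\cong\Ext^1_H(M,P_i)$ is correct and makes it transparent why the non-split class survives the passage to $\mmod H$.

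The alignment problem you flag at the end is, however, a genuine gap and not a routine bookkeeping check. Once $\tau^k X$ projective pins down $k$ (or restricts it to a small set), one must still rule out $\tau^{k-1}Z$ being a summand of $T$, and that constraint can fail for a given $k$. Concretely, in type $A_2$ with $T=P_1\oplus P_2$ the non-split triangle $P_1\to S_1\to P_2[1]\to P_1[1]$ already has $X=P_1$ projective at $k=0$, but there $Z=P_2[1]$ is a shifted projective, $\Hom(T,Z)=0$, and no extension is produced; only $k=-2$ (sending $X\mapsto P_2$ and $Z\mapsto S_1$) recovers $0\to P_2\to P_1\to S_1\to 0$. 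So a valid $k$ need not be the first one meeting the projectivity constraint, and the argument that one always exists is exactly what your ``orbit-counting argument carried out case-by-case'' would have to supply; it is not in the write-up. The paper avoids the issue entirely by selecting $T$ \emph{after} the triangle is fixed, so that $A$ is projective by construction and no translation of the triangle is needed. You should either carry out the alignment count for each Dynkin type, or switch to the paper's strategy of varying the slice.
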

\begin{proof}
Let $\epsilon \colon A \to B \to C \to A[1]$ be a non split triangle,
where $A$ and $C$ are indecomposable objects in $\Cc$. The category is standard so all morphisms are determined by the Auslander--Reiten quiver and the mesh relations. There is a cluster tilting object $T$ defined as a slice containing $A$ and with the shape of the opposite quiver $Q^{op}$ that gives an equivalence of categories \cite[Theorem 2.2]{bmr}: \[\Hom(T, -)\colon \Cc / (T[1]) \to \mmod H,\] 
where $H = KQ$. Then $\Hom_{\Cc}(T, \epsilon)$ is the required non-split extension.
\end{proof}

The cluster category $\Cc$ of type $D$ was realized geometrically in \cite[Theorem 4.3]{S}. The author establishes a bijection between indecomposable objects in $\Cc$ and tagged arcs (the tag being notched or plain) on the punctured disk with $n$ marked points on the boundary, and the irreducible morphisms are expressed as geometric moves and change of tags. See Figure \ref{categoria}. In this category, the crossing number (see Definition \ref{def:crossing}) between two tagged arcs $\alpha$ and $\beta$ is 0,1 or 2  \cite[Section 3.2]{S}. The crossing number equals the dimension of $\Ext_{\Cc} (X_\alpha ,X_\beta )$ as a $K$-vector space.

\section{Cluster algebras from surfaces and skein relations}\label{sec:skein-rel}

Here we summarize some definitions and results on cluster algebras arising from surfaces. In particular, we want to recover some equations that can be shown in pictures over the punctured disk and that will relate to non-split extensions.

\smallskip

Let $S$ be a connected oriented 2-dimensional Riemann surface with non empty boundary $\partial S$. Fix a non-empty finite set $M$ of marked points. There is at least one marked point on each connected boundary component. The marked points in the interior of $S$ are called punctures. The pair $(S,M)$ is called a \emph{bordered marked surface} or, in a shortened way, a \emph{surface}. For more details, see \cite{FST}.

Consider the next definition for plain (i.e. non-tagged) arcs and generalized arcs.

\begin{defi} 

\begin{enumerate}
\item A \emph{generalized arc} $\alpha$ is a curve in $S$ (up to isotopy) such that: its endpoints are in $M$, except for its endpoints $\alpha$ is disjoint from $\partial S$, $\alpha$ does not cut out an unpunctured monogon or an unpunctured bigon.

\item A generalized arc is an \emph{arc} if moreover it does not cross itself except when its endpoints coincide. 

\item A \emph{closed loop} is a closed curve $\zeta$ which is disjoint from $M$ and $\partial S$.

\item For any two arcs $\alpha$ and $\beta$, the \emph{plain crossing number} $e^\bullet(\alpha, \beta)$ is the minimal number of crossings between curves $\tilde{\alpha}$ and $\tilde{\beta}$ in their respective isotopy classes.

\item  Two arcs $\alpha$ and $\beta$ are \emph{compatible} if $e^\bullet(\alpha,\beta)=0$. 
\end{enumerate}  
\end{defi}

We will use later the plain crossing number to define the crossing number in the tagged setting.

Generalized arcs and loops are allowed to self-cross a finite number of times. An \emph{ideal triangulation} is a maximal collection $\Tt$ of pairwise compatible arcs. It is possible for an ideal triangle to have only two distinct edges, then it is called \emph{self-folded} triangle. An example is shown in Figure \ref{loop}.

\begin{figure}[h!]
\centering
\def\svgwidth{3.7in}
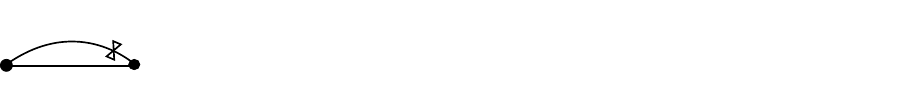
\label{triangulacion}
\caption{Tagged arcs at a puncture and noose. Self-folded triangle (right).}
\label{loop}
\end{figure}

In \cite[Section 4]{FST} each ideal triangulation $\Tt$ is associated to a skew-symmetric matrix $B_{\Tt}$ in a puzzle-like game. This matrix can be used as the only piece of information needed to define a cluster algebra.
The \emph{cluster algebra associated to the  surface} $(S,M)$ is the cluster algebra $\cA (S,M)$ defined from the seed $(\x,B_{\Tt})$, where the initial variables $x_1, \ldots, x_n$ are in  correspondence with the arcs in an ideal triangulation $\Tt$ of $(S,M)$. We will always consider the cluster algebra with \emph{ trivial coefficients}. We will not give a detailed definition of cluster algebra, we refer the reader to the very nice introduction given in \cite[Section 2]{MSW}.

\smallskip

Ideal triangulations are connected by flips. A flip is a local exchange that happens on a square within the triangulation. All four sides are either arcs in the triangulation or boundaries, and one of the diagonals belongs to $\Tt$. A flip exchanges this diagonal for the other one.

When $\Tt$ has self-folded triangles the internal edge of that triangle cannot be flipped. To solve this problem Fomin--Shapiro--Thurston \cite[Section 7]{FST} defined \emph{tagged arcs}. When one of the endpoints of the arc is a puncture then we have the option of tagging near this endpoint either \emph{notched} $\bowtie$ or \emph{plain} $\bullet$ (in the plain case we may not put a mark in our figures). The tagged arc denoted by $\gamma^{\Join}$ in Figure \ref{loop} replaces the arc $l$ enveloping the puncture in the model. An arc $\alpha$ not at the puncture is equal to its plain version. We call the arc $l$ a \emph{noose} or a loop.

\smallskip

When the surface is the disk with one puncture there can only be notched tagged arcs at the puncture. Denote by $\gamma^{\bowtie}$ the notched tagged arc at $p$ and by $\gamma^{\bullet}$ the plain version, see Figure \ref{loop}.

While one can define the crossing number for tagged arcs in general, we will give a definition only for the punctured disk. 

\begin{defi}\label{def:crossing} Let $\alpha$ and $\beta$ be two tagged arcs on the punctured disk. Then the \emph{crossing number} $e(\alpha, \beta)$ is computed as follows
\begin{enumerate}
\item if one of them is not an arc at the puncture then $e(\alpha, \beta) = e^\bullet(\alpha^\bullet,\beta^\bullet)$ 
\item if both are arcs at the puncture and they have the same tag then $e(\alpha, \beta) = 0$, if they have different tag then the notched arc, say $\alpha$, should be replaced with the corresponding noose $l$: $e(\alpha,\beta) = e^\bullet (l,\beta )$. 
\end{enumerate}
\end{defi}

A maximal set of non-crossing tagged arcs (that is two by two compatible: $e(\alpha, \beta) = 0$) is a \emph{triangulation}. Every ideal triangulation not containing self-folded triangles can be associated to a tagged triangulation in a trivial way, where all arcs at the puncture are tagged plain.

\subsection{Skein relations}

Given an ideal triangulation $\Tt$ without self-folded triangles, generalized arcs and loops define cluster algebra elements. We will recall some identities that \emph{hold for cluster algebras arising from surfaces possibly with punctures, in particular they hold for the punctured disk}, and follow from the interpretation of cluster variables as lambda lengths and the
Ptolemy relations for lambda lengths \cite{FT}. The identities that we will mention also arise from combinatorial formulas that appear in several articles as \cite{MW,MSW}.

\smallskip

Each finite set $C$ of generalized arcs or closed loops is called a {\it multicurve}, such set is associated to a cluster algebra element defined by the product of all elements $x_\alpha$ with $\alpha\in C$.  A multicurve $C$ is represented drawing each curve $\alpha \in C$ over $(S,M)$.

Let $\gamma^{\bowtie} $ and $\gamma^{\bullet}$ be as in Figure \ref{loop}, the elements in $\cA (S,M)$ satisfy $x_{\gamma^{\bowtie}} x_{\gamma^{\bullet}} = x_{l}$. This definition is compatible with Ptolemy relations for lambda lengths and exchange relations of cluster variables, see \cite[Lemma 8.2]{FT}, and agrees with the combinatorial definition in \cite[Section 4]{MSW}. When $\alpha$ is a tagged arc, $x_\alpha$ is a cluster variable. If $\epsilon$ is a curve isotopic to a boundary segment, we set  $x_{\epsilon} = 1$.

%The next definitions follow the work developed in \cite{FT,MW,MSW,ASSh}. 

Let $\alpha$, $\beta$ and $\delta$ be generalized arcs, tagged plain (this includes the noose), or closed loops. Let $\alpha$ be a generalized arc crossing $\beta$ at a point $x$, and let $\delta$ be such that it has a self-intersection at a point $x$.

\begin{defi}\label{def:smoothing} 
For the crossings as those in Figure \ref{skeinrel} (1) and (2),

\begin{figure}[h!]
\centering
\def\svgwidth{5in}
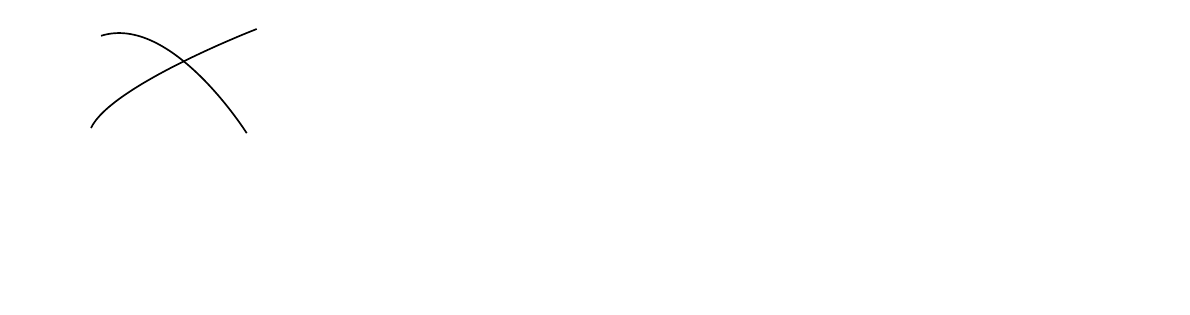
\caption{Smoothing arcs.}
\label{skeinrel}
\end{figure}

\begin{enumerate}

\item the smoothing of $\{ \alpha, \beta\}$ is the pair of multicurves $\alpha^+ \beta$, obtained walking along $\alpha$, stopping just before the point $x$ and turning right following $\beta$, and $\alpha^- \beta$, obtained in a similar way but turning left at $x$.

\item  the smoothing of the self intersecting curve $\{ \delta \}$ is the pair of multicurves $\{ \zeta, \gamma \}$,  obtained walking along $\delta$ and avoiding the point $x$ by turning right, and $\{ \lambda \}$, obtained walking along $\delta$ and avoiding the point $x$ by turning left.
\end{enumerate}  

\end{defi}

We have identities for the elements represented by the multicurves in the cluster algebra $\cA (S,M)$:
\[ (1) \ \mathrm{and} \ (4) \hspace{5pt} x_\alpha  x_\beta = x_{\alpha^+ \beta} + x_{\alpha^- \beta} \hspace{15pt} ; \hspace{15pt} (2) \hspace{5pt}
x_\delta = x_{\gamma, \zeta} + x_\lambda.\]

A closed loop $\sigma $ that is contractible to a puncture gives us the element $\mathrm{(3) \ \ x_{\sigma} = + 2}$\footnote{This is since we evaluate all $y$ variables to 1, see \cite[Section 8.3]{MSW}}.

If $\alpha$ and $\beta$ are tagged arcs crossing at a point $x$ and one of them, say $\alpha$, has a puncture as an endpoint and it is tagged notched there, we can proceed as in Figure \ref{skeinrel} (1), preserving the notched end in the multicurves $\alpha^+ \beta$ and $\alpha^- \beta$, as the reader can see in the example in Figure \ref{skeinrel} (4). The equation $(1)$ still holds and can be obtained from the plain version applying the automorphism \emph{change of tag} from \cite[Section 4.4]{ASSh}.

\begin{prop}[Punctured skein relations]\label{Propo1}
Let $\mathcal{A}(S,M)$ be the cluster algebra arising from the punctured disk. Let the arcs in the next figures represent the corresponding elements in $\cA (S,M)$.
The following identities take place in the cluster algebra.

\begin{figure}[h!]
\centering
\def\svgwidth{6in}
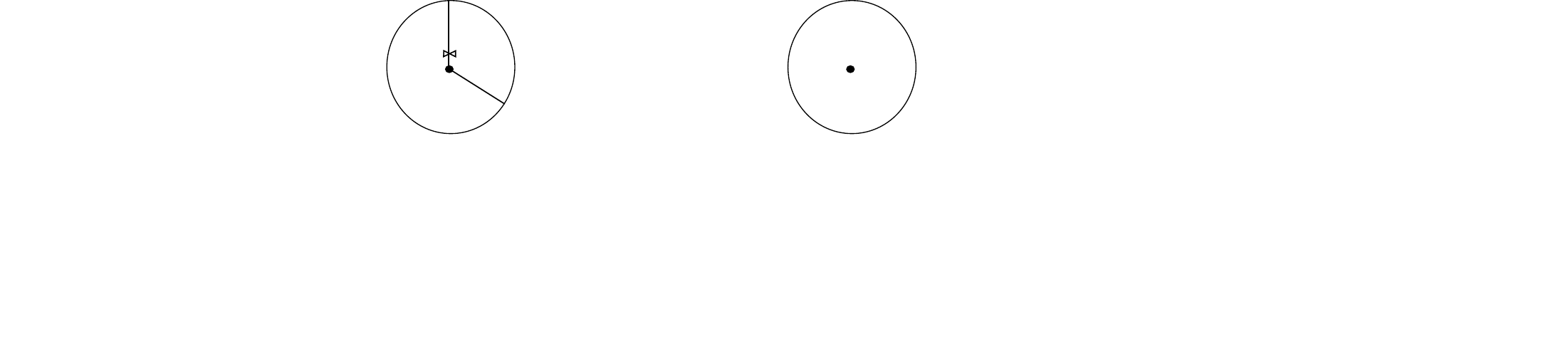
\label{prop1}
\end{figure}

\end{prop}

\begin{proof}
Part (a) is easily obtained after multiplying the expression by $x_{\gamma^\bullet}$, where $\gamma^{\Join} $ is the notched arc in the figure. Then the skein relation $(1)$ can be applied and $x_{\gamma^\bullet}$ can be extracted as a common factor. This gives formula (a). Observe that we can consider that the puncture is the intersection point $x$ and the intersection is determined by the curves having different tag. The smoothing is obtained avoiding the puncture and by turning right and left. In this case we will use the notation $\alpha^+ \beta , \alpha^- \beta$.

We will obtain the first half of (b), as the rest is symmetric. First observe the following identities in Figure \ref{new}, where we use (a), and (2) and (3) from Figure \ref{skeinrel}.

\begin{figure}[h!]
\centering
\def\svgwidth{4in}
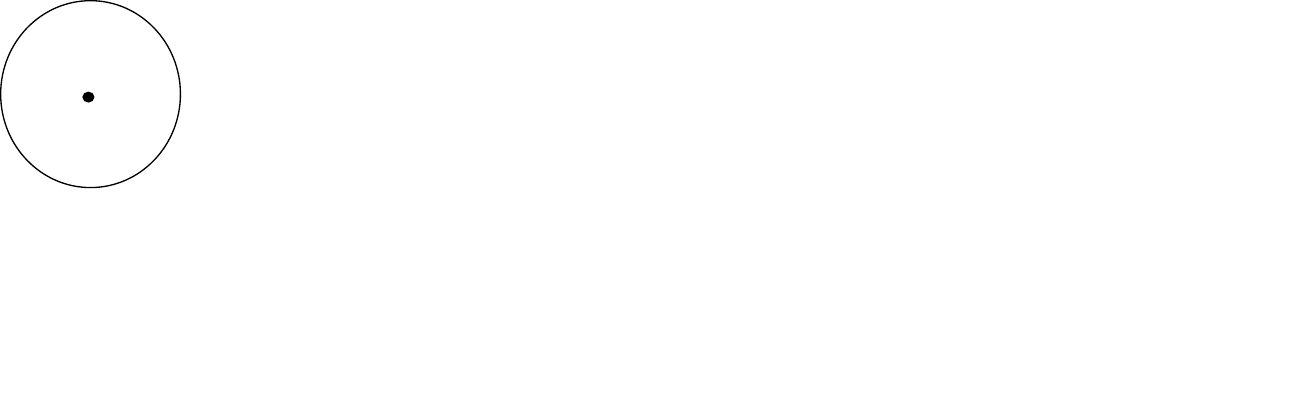
\caption{Proof of Proposition \ref{Propo1}.}
\label{new}
\end{figure}

The formulas in Figure \ref{new} imply the identity in Figure \ref{New2}. Now we can apply skein relations for a double intersection. We have the identity in Figure \ref{new4}, and a symmetrical formula. Thus we obtain part (b).\end{proof}

\begin{figure}[h!]
\centering
\def\svgwidth{3in}
%% Creator: Inkscape inkscape 0.92.5, www.inkscape.org
%% PDF/EPS/PS + LaTeX output extension by Johan Engelen, 2010
%% Accompanies image file '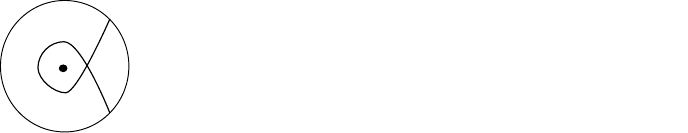' (pdf, eps, ps)
%%
%% To include the image in your LaTeX document, write
%%   \input{<filename>.pdf_tex}
%%  instead of
%%   \includegraphics{<filename>.pdf}
%% To scale the image, write
%%   \def\svgwidth{<desired width>}
%%   \input{<filename>.pdf_tex}
%%  instead of
%%   \includegraphics[width=<desired width>]{<filename>.pdf}
%%
%% Images with a different path to the parent latex file can
%% be accessed with the `import' package (which may need to be
%% installed) using
%%   \usepackage{import}
%% in the preamble, and then including the image with
%%   \import{<path to file>}{<filename>.pdf_tex}
%% Alternatively, one can specify
%%   \graphicspath{{<path to file>/}}
%% 
%% For more information, please see info/svg-inkscape on CTAN:
%%   http://tug.ctan.org/tex-archive/info/svg-inkscape
%%
\begingroup%
  \makeatletter%
  \providecommand\color[2][]{%
    \errmessage{(Inkscape) Color is used for the text in Inkscape, but the package 'color.sty' is not loaded}%
    \renewcommand\color[2][]{}%
  }%
  \providecommand\transparent[1]{%
    \errmessage{(Inkscape) Transparency is used (non-zero) for the text in Inkscape, but the package 'transparent.sty' is not loaded}%
    \renewcommand\transparent[1]{}%
  }%
  \providecommand\rotatebox[2]{#2}%
  \newcommand*\fsize{\dimexpr\f@size pt\relax}%
  \newcommand*\lineheight[1]{\fontsize{\fsize}{#1\fsize}\selectfont}%
  \ifx\svgwidth\undefined%
    \setlength{\unitlength}{330.14482164bp}%
    \ifx\svgscale\undefined%
      \relax%
    \else%
      \setlength{\unitlength}{\unitlength * \real{\svgscale}}%
    \fi%
  \else%
    \setlength{\unitlength}{\svgwidth}%
  \fi%
  \global\let\svgwidth\undefined%
  \global\let\svgscale\undefined%
  \makeatother%
  \begin{picture}(1,0.19253041)%
    \lineheight{1}%
    \setlength\tabcolsep{0pt}%
    \put(0,0){\includegraphics[width=\unitlength,page=1]{new2-simple.pdf}}%
    \put(0.20368391,0.09322133){\color[rgb]{0,0,0}\makebox(0,0)[lt]{\lineheight{0}\smash{\begin{tabular}[t]{l}$=$\end{tabular}}}}%
    \put(0,0){\includegraphics[width=\unitlength,page=2]{new2-simple.pdf}}%
    \put(0.48102689,0.09322133){\color[rgb]{0,0,0}\makebox(0,0)[lt]{\lineheight{0}\smash{\begin{tabular}[t]{l}$+$\end{tabular}}}}%
    \put(-0.7137631,-0.08670921){\color[rgb]{0,0,0}\makebox(0,0)[lt]{\begin{minipage}{0.02993681\unitlength}\raggedright \end{minipage}}}%
    \put(0,0){\includegraphics[width=\unitlength,page=3]{new2-simple.pdf}}%
    \put(0.75363447,0.09322133){\color[rgb]{0,0,0}\makebox(0,0)[lt]{\lineheight{0}\smash{\begin{tabular}[t]{l}$-$\end{tabular}}}}%
    \put(0,0){\includegraphics[width=\unitlength,page=4]{new2-simple.pdf}}%
  \end{picture}%
\endgroup%

\caption{Proof of Proposition \ref{Propo1}.}
\label{New2}
\end{figure}

\begin{figure}[h!]
\centering
\def\svgwidth{3,6in}
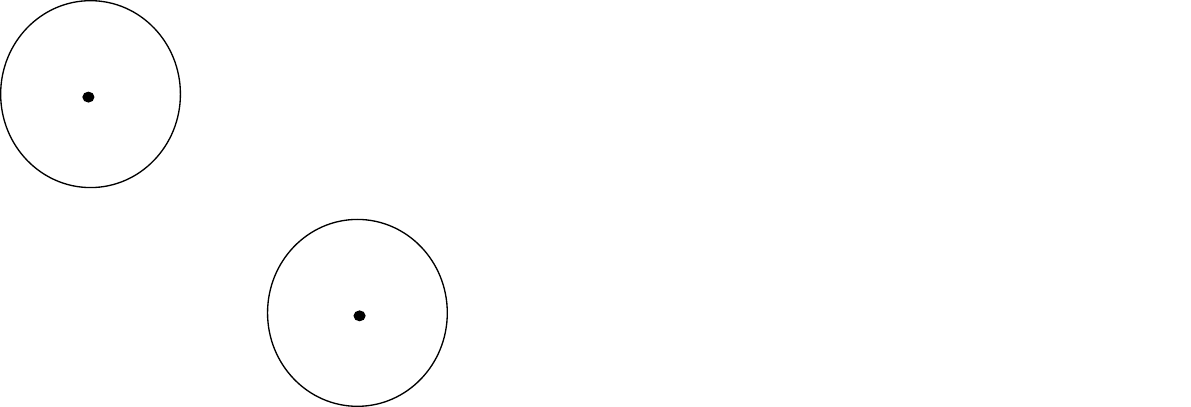
\caption{Proof of Proposition \ref{Propo1}.}\label{new4}
\end{figure}

In the next section we will interpret Proposition \ref{prop1} (b) as a generalization of
\[ (1) \ \ \   x_\alpha  x_\beta = x_{\alpha^+ \beta} + x_{\alpha^- \beta}\] 
which is the \emph{cluster multiplication formula} (see \cite{CK,Pa}) for string objects $X_\alpha$ and $X_\beta$ such that $\dim_K (X_\alpha, X_\beta) = 1$. The reader can find the definition of string objects and cluster categories arising from unpunctured surfaces in \cite{BZ}. The geometric relation between the one dimensional extension space and the equation (1) was proven in \cite{CSh}. This work shows how the middle terms arise from (unpunctured) skein relations.

\begin{remark}\label{Obs: otros resultados} We are going to show how non-split triangles with indecomposable extremes arise from punctured skein relations. The path we follow is the inverse to the technique in \cite{CSh}. In that article, the authors start with a pair of (generalized) arcs $\alpha$ and $\beta$ that cross in the interior of an unpunctured surface. Then, depending on three possibilities for the relative position of this pair of arcs, the authors define a triangulation, hence an associated gentle Jacobian algebra $J_{\alpha,\beta}$, such that there is a non-split extension in $\mathrm{Ext}_{J_{\alpha,\beta}} (M_\alpha, M_\beta)$. From these extensions, they can deduce the existence of triangles in the generalized cluster category. 

In our case, we find all the non-split triangles with indecomposable extremes in the cluster category $\mathcal{C}$ since we can take a single 'canonical triangulation' that defines the hereditary algebra $H$ and we can use Lemma \ref{lema-pla}. Then we will deduce the existence of each non-split extension of $J$-modules for any cluster tilted algebra $J$ of type $D$ from these triangles, for this we will use Lemma \ref{lemma-existe-sec}. 
\end{remark}

%%%%%%%%%%%%%%%
%%%%%%%%%%%%%%%%
%%%%%%%%%%%%%%

\section{Extensions from punctured skein relations}\label{sec:extensions}

We know from Section \ref{sec:cluster triangles} that non-split triangles in the cluster category $\Cc$ of type $D$ can be found using non-split extensions in $\mmod H$. We will use the non-split extensions mentioned in Section \ref{hereditary} to determine all possible short exact sequences over a Jacobian algebra arising from the punctured disk. 

\smallskip

Take the algebra $KQ=H$ defined by the triangulation in Figure \ref{Big} (that is again the one defined by the oriented quiver in Section \ref{hereditary}). Said algebra is defined as $\End^{op}_{\mathcal{C}}(T)$, where we denote by $T_i[1] $ the shift of each cluster tilting object summand and use the short notation $i$ for the arc in $ \mathcal{T}$. 

Recall that the Auslander-Reiten functor $\tau$ acts on arcs by rotating the endpoints counterclockwise and changing tag when the arc is at the puncture while the inverse $\tau^{-1}$ rotates endpoints in the opposite direction, see Figure \ref{categoria} in Section \ref{sec:cluster triangles}. Thus the projective module $P(i)$ over $\mathrm{mod} \, H$ corresponds to the arc $\tau^{-1}(i)$ in the geometric realization. 

\smallskip 

From now on it is important to remember that each tagged arc (not in $\mathcal{T}$) corresponds to an indecomposable module in $\mathrm{mod} \, H$ as there is a correspondence between arcs not in $\mathcal{T}$ and modules, see \cite[Section 6.4]{S}.

\begin{figure}[h!]
\centering
\scalebox{0.77}{\def\svgwidth{2,1in}
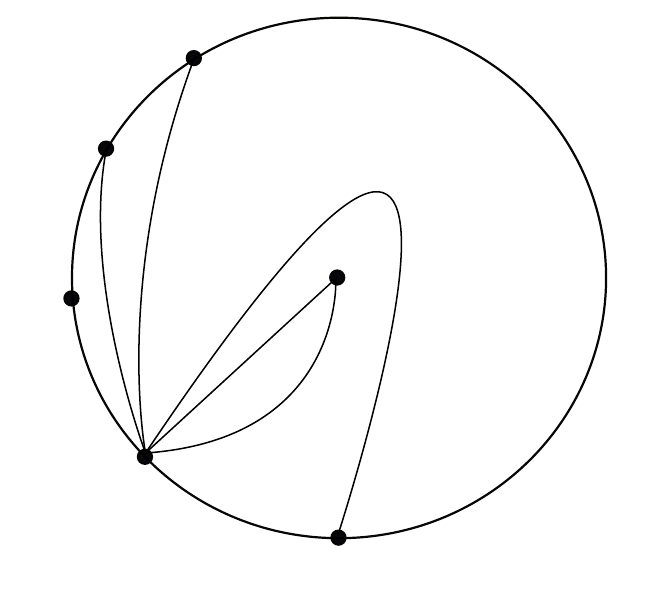}
\caption{Triangulation $\mathcal{T}$ that defines the Jacobian algebra $H=KQ$.}
\label{Big}
\end{figure}

Until the end of this section we denote an arc of the form $\tau^{-1}(i)$ by $\alpha$, and the module $P(i)$ by $M_{\alpha}$. 

%%%%%%%%%
%%%%%%%%%

Denote each boundary marked point $a_1, \ldots, a_n$ increasing clockwise as in Figure \ref{Big}, with this notation $a_n$ is one of the endpoints of $\alpha$. 

\smallskip

For a given $\alpha$, let $\beta \notin \mathcal{T}$ be an arc such that $e(\alpha, \beta) > 0$, then denote by $M_{\beta}$ the corresponding $H$-module. 

 The triangulation $\mathcal{T}$ (i.e. the position of the chosen cluster tilting object that defines an hereditary algebra) is such that for $\alpha$ and $\beta$ we have $\dim_K \Ext^1 (M_{\beta},M_{\alpha}) = e(\alpha, \beta) $ since the morphisms involved in the sequence do not factor through the ideal of morphisms $ ( \add (\oplus_{i=1}^n T_i [1]) ) = (T[1])$ in $\Cc$.

\subsection{Cases where $e(\alpha, \beta) =1$}

In this section we obtain each one of the non-split extensions in which $ \dim_K \Ext^1(M_{\beta},M_{\alpha}) = 1$. They are exactly the non-split extensions described in \cite[Theorem 2.2, 1-5]{Br} and the reader can compare the Figure \ref{categoria} and the Auslander--Reiten quiver in the after mentioned article. Now we can interpret each middle term as a multicurve $\alpha^+ \beta$ obtained by smoothing the crossing between $\alpha$ and $\beta$. 

\smallskip

Note that a multicurve defines a module as follows: if $\alpha^+ \beta = \{ \gamma_1, \ldots, \gamma_m \}$, then $M_{\alpha^+ \beta} = \oplus_{j=1}^m M_{\gamma_j}$. When $\gamma_j$ is isotopic to a boundary segment $M_{\gamma_j} = 0$.

\begin{figure}[h!]
\centering
\scalebox{0.88}{\def\svgwidth{6,6in}
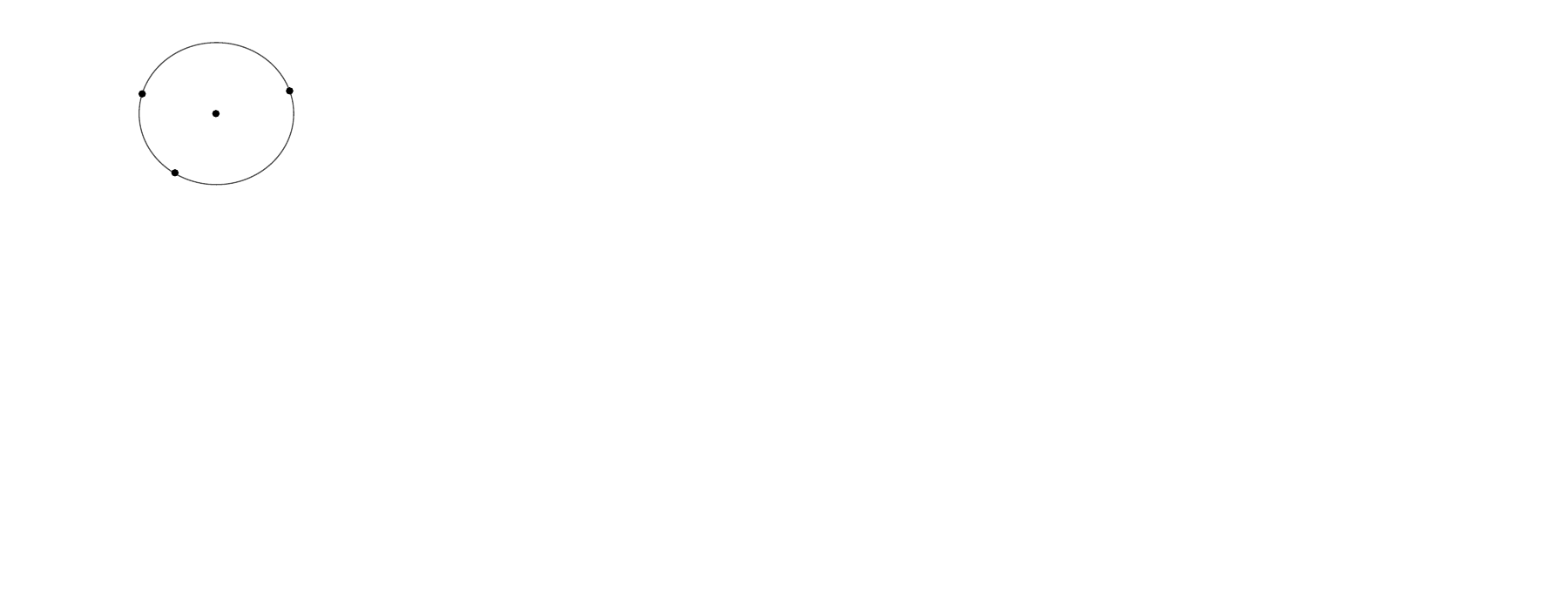}
\caption{Non-split extensions in $ \Ext^1(M_{\beta},M_{\alpha})$. The middle term corresponds to the module associated to the multicurve $\alpha^+ \beta$.}
\label{sec1}
\end{figure}

Observe the middle terms in the non-split sequences (1), (2) and (3) in Figure \ref{sec1}. For the case (1) we need the identity in the cluster algebra $x_l = x_{\gamma^{\bowtie}} x_{\gamma^\bullet}$. In case (2) the intersection is at the puncture, so the smoothing $\alpha^+ \beta$ is obtained as in Proposition \ref{Propo1} (a). 

See the middle terms in the non-split sequences (4) and (5) in Figure \ref{sec2}.
In the case of sequence (5), the smoothing carries a tag as explained in Figure \ref{skeinrel} (4).

\begin{figure}[h!]
\centering
\scalebox{0.88}{\def\svgwidth{5,7in}
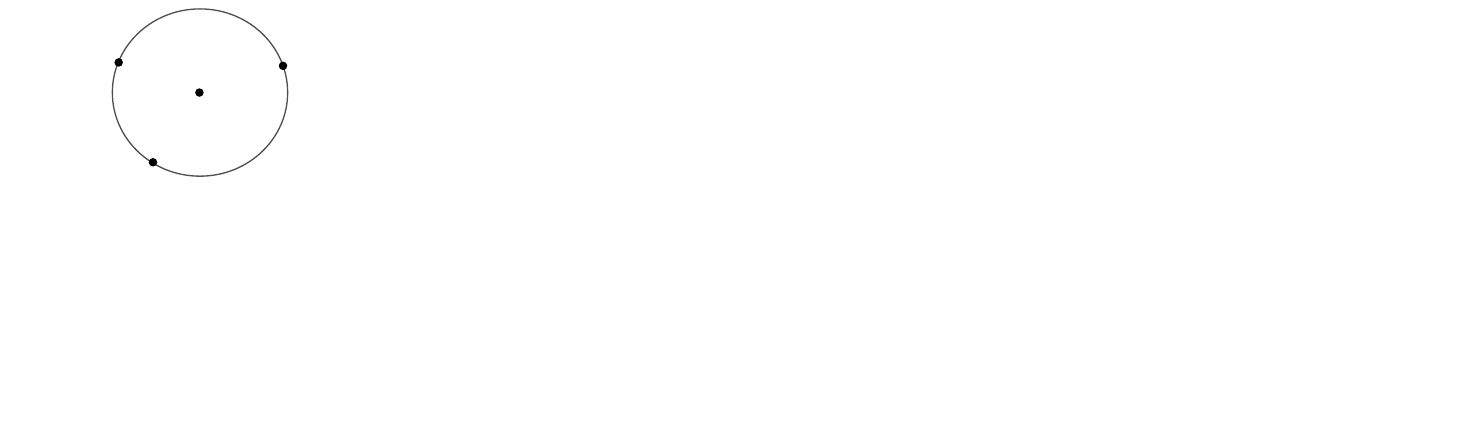}
\caption{Non-split extensions in $ \Ext^1(M_{\beta},M_{\alpha})$. The middle term is obtained as $\alpha^+ \beta$.}
\label{sec2}
\end{figure}

\begin{remark}
 Note that when $e(\alpha,\beta) = 1$, then $\alpha^+ \beta$ is a single multicurve. When $e(\alpha, \beta) = 2$ the set $\alpha^+ \beta $ will contain four multicurves an they appear in Figure \ref{localsmoo}. 
\end{remark}

\subsection{Cases where $e(\alpha, \beta) =2$}

We obtain now the non-split extensions appearing in Lemma \ref{middle-terms}. The reader can see the summands of $x_{\alpha^+ \beta}$ in Figure \ref{localsmoo}. They correspond to the four rightmost terms enclosed by a rectangle in Proposition \ref{Propo1} (b).
\[ 2 x_{\alpha} x_{\beta} = (x_{\alpha^- \beta}) + (x_{\alpha^+ \beta}) = (x_{\alpha^- \beta,x} + x_{\alpha^- \beta,y}) + (x_{\alpha^+ \beta,x} + x_{\alpha^+ \beta,y}),\]
When $\alpha$ and $\beta$ cross twice, we denote the local smoothing of $\alpha$ and $\beta$ at the interior point $z$ by $\alpha^* \beta, z$. This multicurve is obtained as in Definition \ref{def:smoothing}.

\begin{figure}[h]
\centering
\scalebox{0.9}{\def\svgwidth{5.2in}
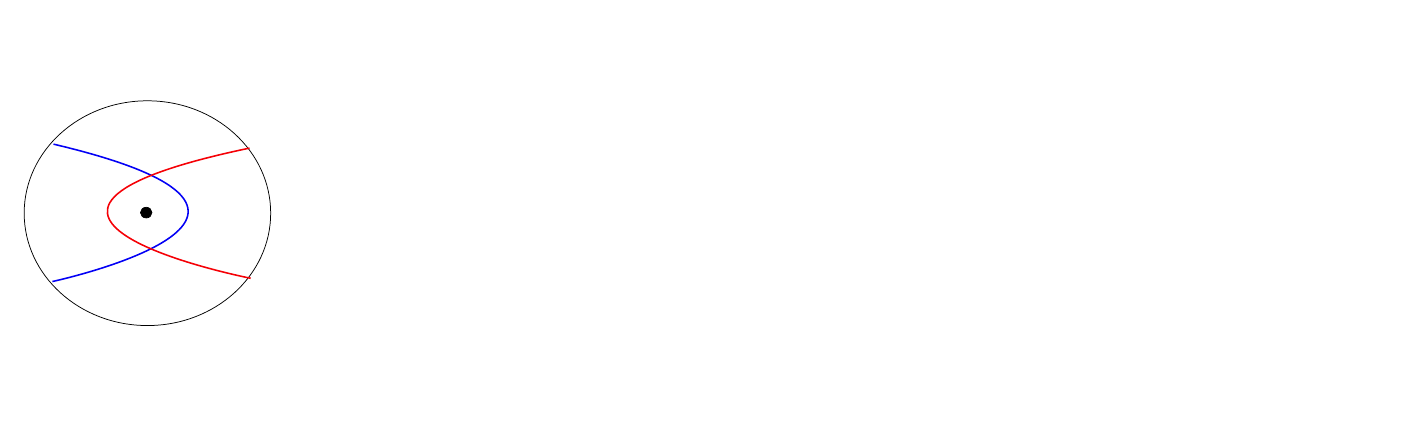}
\caption{The smoothing $\alpha^+ \beta$ at points $x$ and $y$ in the interior of the disk. The element corresponding to $\alpha^+ \beta,y$ is replaced by the sum in the rectangle since the identity holds in the cluster algebra.}
\label{localsmoo}
\end{figure}

\begin{figure}[h!]
\centering
\scalebox{0.9}{\def\svgwidth{3,8in}
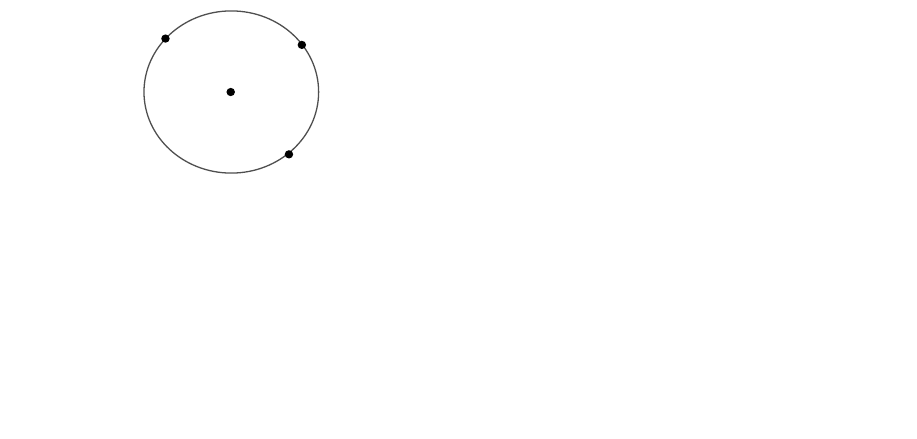}
\caption{Two non-split extensions that define a basis of $ \Ext^1(M_\beta,M_\alpha)$.}
\label{sec3}
\end{figure}

\begin{figure}[h]
\centering
\scalebox{0.9}{\def\svgwidth{3,6in}
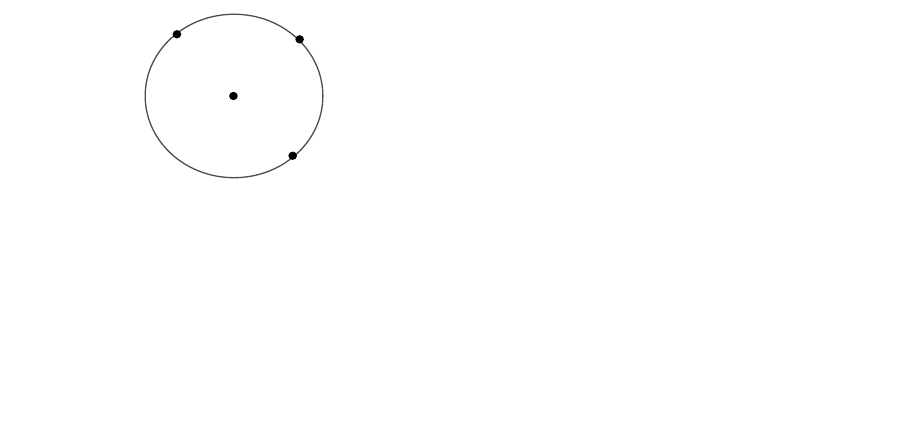}
\caption{The remaining non-split extensions in $ \Ext^1(M_\beta,M_\alpha)$.}
\label{sec4}
\end{figure}

%%%%%%%%%%%%%%%%%%%%%%%%%%%
%%%%%%%%%%%%%%%%%%%%%%%%%%
%%%%%%%%%%%%%%%%%%%%%%%%%%%

\section{Non-split extensions of type $D$}\label{resultados-principales}

During this section we will need the cluster tilting equivalence. A cluster tilting object $T$ is a maximal basic object such that $\Hom(T,T[1]) = \Ext (T,T)=0$. In the geometric models, said object $T$ is represented by a triangulation $\mathcal{T}$, as $e(\alpha, \beta)$ represents the dimension of the corresponding $\Ext(X_\alpha, X_\beta)$ as $K$ vector space. Given a cluster tilting object $T$, the $\Hom$ functor produces a celebrated equivalence \cite[Theorem 2.2]{bmr}:
\[\Hom(T, -)\colon \Cc / (T[1]) \to \mmod B\] 
for $B= \End_{\mathcal{C}} (T)^{op}$. We have the next result.

\begin{theorem}\label{theorem-all-triangles}
All non-trivial triangles with indecomposable extreme terms in the cluster category of type $D$ can be obtained via (punctured) skein relations.
The triangles have the form
\[ X_\alpha \to X_C \to X_\beta \to X_{\alpha} [1], \]
where $C$ is a multicurve in $\alpha^+ \beta$.
\end{theorem}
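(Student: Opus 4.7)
The plan is to combine Lemma~\ref{lema-pla} with the classification of non-split extensions in $\mmod H$ obtained in Section~\ref{hereditary}, and then match each middle term to a multicurve produced by a (punctured) skein relation. By Lemma~\ref{lema-pla} every non-split triangle with indecomposable extremes in $\Cc$ is of the form $\Hom_{\Cc}(T,\epsilon)$ for some short exact sequence $\epsilon\colon 0 \to P_i \to Y \to M \to 0$ in $\mmod H$, where $T$ is a suitable cluster tilting object whose quiver is $Q^{op}$. Under the geometric model of Section~\ref{sec:cluster triangles}, the Auslander--Reiten translation $\tau$ rotates the endpoints of an arc (swapping tags at the puncture), so applying $\tau^{-k}$ to any such extension transports it to a triangle with arbitrary indecomposable extremes; since $\tau$ is an autoequivalence and the smoothing operation $\alpha^+\beta$ is equivariant under rotation of endpoints, it suffices to verify the statement starting from $A = M_\alpha = P_i$.

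Next I would organize the verification by the value of the crossing number $e(\alpha,\beta)\in\{1,2\}$, which equals $\dim_K \Ext^1_{\Cc}(X_\beta,X_\alpha)$. When $e(\alpha,\beta)=1$, the middle term is determined by $\Ext$ and coincides with one of the five families of \cite[Theorem 2.2]{Br}; each of these is drawn in Figures~\ref{sec1} and~\ref{sec2}. For the cases whose intersection lies in the interior of the disk the ordinary skein relation of Definition~\ref{def:smoothing}(1) produces the middle term directly as $X_{\alpha^+\beta}$. For the case where $\alpha$ and $\beta$ meet at the puncture the punctured identity of Proposition~\ref{Propo1}(a) replaces the ordinary relation, while the case with a notched endpoint uses the tag-preserving skein rule of Figure~\ref{skeinrel}(4) together with the change-of-tag automorphism of \cite[Section~4.4]{ASSh}. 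In every case one checks, using $x_\gamma = 1$ for boundary curves and $x_l = x_{\gamma^{\bowtie}} x_{\gamma^\bullet}$ at the noose, that the module attached to $\alpha^+\beta$ agrees with $Y$.

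When $e(\alpha,\beta) = 2$, Lemma~\ref{middle-terms} restricts the middle term to one of exactly four isomorphism classes (s1)--(s4): three appear already in \cite{Br}, and the fourth is the new extension constructed in Lemma~\ref{new-sequences}. The key tool is Proposition~\ref{Propo1}(b), which resolves the two intersection points $x$ and $y$ simultaneously as
\[ 2\, x_\alpha x_\beta \;=\; x_{\alpha^+\beta,\,x} + x_{\alpha^+\beta,\,y} + x_{\alpha^-\beta,\,x} + x_{\alpha^-\beta,\,y}, \]
so that $\alpha^+\beta$ is the four-element set of multicurves depicted in Figure~\ref{localsmoo}. The last step of the proof is a direct comparison: match each of these four multicurves with one of the middle terms (s1)--(s4), exactly as displayed in Figures~\ref{sec3} and~\ref{sec4}, and verify that no nonzero summand is lost when passing from the cluster algebra identity to the module decomposition.

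The main obstacle lies in case (s4), the extension of Lemma~\ref{new-sequences}, whose middle term $X\oplus X_3$ is \emph{not} visible from ordinary (unpunctured) skein relations. Its geometric recovery relies on the contractible-loop identity $x_\sigma=+2$ and on resolving self-intersections of curves via Definition~\ref{def:smoothing}(2), both combined in Figures~\ref{new}, \ref{New2} and~\ref{new4} to produce the punctured skein relation of Proposition~\ref{Propo1}(b). Once this relation is available the matching with (s4) is immediate from the picture, but the delicate point is to check that the coefficient $2$ on the left and the cancellation of the boundary-isotopic summands on the right conspire to output precisely the four distinct multicurves of $\alpha^+\beta$, with no stray contractible loop or double-counted term. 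This coefficient bookkeeping is the real content of the proof; everything else is a case-by-case verification against the list already enumerated in Section~\ref{sec:extensions}.
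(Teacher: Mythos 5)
Your proposal is correct and takes essentially the same route as the paper: reduce via Lemma~\ref{lema-pla} to extensions $0\to P_i\to Y\to M\to 0$ over $H$, split into the cases $e(\alpha,\beta)=1,2$, and read off the middle terms from the skein pictures (1)--(9) of Section~\ref{sec:extensions} using Proposition~\ref{Propo1}. The only slip is a reversal of direction when invoking Lemma~\ref{lema-pla}: the functor $\Hom_{\Cc}(T,-)$ sends a triangle $\epsilon$ in $\Cc$ to a short exact sequence in $\mmod H$, so one should say that every such triangle \emph{projects to} (and conversely lifts) an extension $0\to P_i\to Y\to M\to 0$, not that the triangle ``is of the form $\Hom_{\Cc}(T,\epsilon)$''.
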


\begin{proof}
From Lemma \ref{lema-pla} the extensions (1)-(9) lift to triangles in $\Cc$.  When we consider the category $\Cc$ the marked point $n$ on the boundary is not distinguished anymore  and the geometric figures (1)-(9) represent triangles independently of the arcs endpoints. The arc crossings in (1)-(9) are all the possible crossings of two arcs over the punctured disk, hence we find all the possible non-split extensions with indecomposable extremes.
\end{proof}

\begin{remark} We found the possible middle terms of \[ X_\alpha \to X_C \to X_\beta \to X_{\alpha} [1] \] performing a smoothing $\alpha^+ \beta$. On the other hand, we can do $\alpha^- \beta = \beta^+ \alpha$, in that case we obtain the middle terms for  \[ X_\beta \to X_E \to X_\alpha \to X_{\beta} [1]. \]
\end{remark}

Now that we have identified all the triangles in $\mathcal{C}$ we ask, given a cluster tilting object $T$, when a triangle gives rise to a short exact sequence in $\mmod B$. We will use the next lemma.

\begin{lemma}\label{lemma-existe-sec} Let $T$ be a cluster tilting object and $B = \End (T)^{op}$. 
Let $A \to E \to C \to A [1] $ be a triangle in a cluster category such that the identity 
\begin{equation*}
\dim_K \Hom(T,A) + \dim_K \Hom(T,C)  = \dim_K \Hom(T,E),
\end{equation*}
holds. Then 
\begin{equation*}
0 \to  \Hom(T,A) \to  \Hom(T,E)  \to  \Hom(T,C) \to 0
\end{equation*}
is a short exact sequence in $\mmod B$.
\end{lemma}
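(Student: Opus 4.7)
The plan is to apply the cohomological functor $\Hom_{\Cc}(T, -)\colon \Cc \to \mmod B$ to the given triangle and then use the dimension hypothesis to extract exactness at the two ends. Since $T$ is a cluster tilting object and $\Cc$ is Hom-finite of Dynkin type, all the $\Hom$-spaces are finite-dimensional $K$-vector spaces, which is what makes the dimension-count argument available.

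First, I would recall that for any triangulated category, $\Hom(T,-)$ is a cohomological functor. Applied to the triangle $A \to E \to C \to A[1]$ this yields a long exact sequence
\[
\Hom(T, C[-1]) \xrightarrow{\partial_-} \Hom(T, A) \xrightarrow{f_*} \Hom(T, E) \xrightarrow{g_*} \Hom(T, C) \xrightarrow{\partial_+} \Hom(T, A[1]).
\]
From this, the short four-term exact sequence
\[
0 \to \operatorname{Im}\partial_- \to \Hom(T,A) \xrightarrow{f_*} \Hom(T,E) \xrightarrow{g_*} \Hom(T,C) \to \operatorname{Im}\partial_+ \to 0
\]
is exact in $\mmod B$ (the $B$-module structure is automatic because $\Hom(T,-)$ lands in $\mmod B$).

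Next, I would take alternating sums of $K$-dimensions along this five-term exact sequence to obtain
\[
\dim_K \operatorname{Im}\partial_- + \dim_K \operatorname{Im}\partial_+ = \dim_K \Hom(T,A) - \dim_K \Hom(T,E) + \dim_K \Hom(T,C).
\]
Substituting the hypothesis $\dim_K \Hom(T,A) + \dim_K \Hom(T,C) = \dim_K \Hom(T,E)$ forces the right-hand side to equal $0$, so both $\operatorname{Im}\partial_-$ and $\operatorname{Im}\partial_+$ must vanish. This is equivalent to saying that $f_*$ is injective and $g_*$ is surjective, which is exactly the desired short exact sequence.

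No real obstacle is expected here; the argument is essentially the standard Euler-characteristic trick for five-term exact sequences. The only subtlety worth double-checking is that the hypothesis and conclusion both live in $\mmod B$ and not just among $K$-vector spaces, but this is taken care of by the functoriality of $\Hom(T,-)$ into $\mmod B$, so the maps $f_*$ and $g_*$ are automatically $B$-linear.
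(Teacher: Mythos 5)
Your proof is correct and takes essentially the same approach as the paper: apply the cohomological functor $\Hom(T,-)$, truncate the long exact sequence to a five-term exact sequence, and use the Euler-characteristic/alternating-sum argument together with the dimension hypothesis to force the boundary terms to vanish. The only difference is notational — you phrase the end terms as $\operatorname{Im}\partial_-$ and $\operatorname{Im}\partial_+$ while the paper writes $\ker f$ and $\operatorname{coker}g$, but these coincide by exactness of the long exact sequence, so the two proofs are the same.
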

\begin{proof}
We start with the triangle $A \ra E \ra C \ra A[1]$ and apply $\Hom (T,-)$. Since it is a cohomological functor, this yields a long exact sequence in $\mmod B$:
\[
\cdots \ra \Hom(T,C[-1]) \ra \Hom(T,A) \xrightarrow{f}  \Hom(T,E)  \xrightarrow{g}  \Hom(T,C) \ra \Hom(T,A[1]) \ra \cdots
\]

Now we can truncate this long exact sequence and obtain the (shorter) exact sequence
\[
0 \ra ker f \ra \Hom(T,A) \xrightarrow{f}  \Hom(T,E)  \xrightarrow{g}  \Hom(T,C) \ra coker g \ra 0
\]

Basic linear algebra shows that
\[
\dim_K ker f - \dim_K \Hom(T,A) + \dim_K  \Hom(T,E)  - \dim_K \Hom(T,C) + \dim coker g = 0.
\]

From the hypothesis, we get $\dim_K ker f + \dim_K coker g = 0$. Hence $\dim_K ker f = \dim_K coker g = 0$ and the sequence
\[
0 \to  \Hom(T,A) \ra  \Hom(T,E)  \ra  \Hom(T,C) \to 0
\]
is exact.\end{proof}

\begin{remark}\label{rem-vectordim} Let $M_\gamma$ be a $B$ module for $B=KQ/I = \End_\Cc^{op}(T)$ a cluster tilted algebra of type $D$ (or more generally for a $2$-Calabi--Yau tilted algebra from a cluster category arising from a surface). Then $M_\gamma$ is such that as a $(Q,I) $ representation $\dim_K (M_\gamma)_i = e(i, \gamma)$, where we denote by $i$ the arc corresponding to $T_i [1]$ and also we may denote by $i$ the associated vertex of the quiver $Q$. This is because 
\[ (M_\gamma)_i = \Hom_B(P_i,M_\gamma) \simeq \dfrac{\Hom_{\mathcal{C}}(T_i, X_\gamma)} {(T[1])} \simeq \Hom_{\mathcal{C}} (T_i, X_\gamma)\simeq \Hom_{\mathcal{C}} (T_i[1], X_\gamma[1])=\Ext_{\mathcal{C}} (T_i [1], X_\gamma)\]
where the third $K$-isomorphism holds as $\Hom_{\mathcal{C}}(T_i,T[1]) = 0$. 
\end{remark}

\begin{defi} Let $\mathcal{T}$ be a triangulation of the punctured disk.
Let $C = \{ \gamma_i , \ldots , \gamma_t \}$ be a multicurve. We call the {\it total dimension of $C$}, and denote by $d(C)$, the total number of crossings between the multicurve and $\mathcal{T}$.
\end{defi}

Let $T$ be a cluster tilting object in $\mathcal{C}$ and $\mathcal{T}$ the corresponding triangulation on the punctured disk. Let $\alpha$ and $\beta$ be two arcs such that $\alpha, \beta \notin \mathcal{T}$ and $e(\alpha, \beta) \geq 1$. For the multicurve $\{ \alpha, \beta \}$ we have $d(\{\alpha, \beta \})= \sum_j e(j, \alpha) +  e(j, \beta)$ where $j$ runs over all the arcs in $\mathcal{T}$. That is, from Remark \ref{rem-vectordim}, $d(\{\alpha, \beta \})$ is the dimension of $M_\alpha \oplus M_\beta$ as a $K$ vector space.

%That is, $d(\{\alpha, \beta\})$ is the total number of crossings between the multicurve $\{ \alpha, \beta \}$ and $\mathcal{T}$ or, 

%As a consequence we have the next result. 

\begin{prop}\label{prop:preserva-dimension} Let $\alpha$ and $\beta$ be arcs in the disk such that $e(\alpha, \beta) \geq 1$. Let $ X_\alpha \to X_C \to X_\beta \to X_{\alpha} [1] $ and $ X_\beta \to X_E \to X_\alpha \to X_{\beta} [1] $ be non-split triangles with indecomposable extremes in the cluster category of type $D$, where $C \in \alpha^+ \beta$ and $E \in \alpha^- \beta$, and let $\mathcal{T}$ be a triangulation.
Then there is a non-split extension 
\[ 0 \to M_{\alpha} \to M_{C} \to M_{\beta} \to 0 \]
if and only if $d(C) = d(\{ \alpha, \beta\})$, and there is a non-split extension
\[ 0 \to M_{\beta} \to M_{E} \to M_{\alpha} \to 0 \]
if and only if $d(E) = d(\{ \alpha, \beta\})$\end{prop}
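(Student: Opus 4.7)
The plan is to reduce the claim to the dimension hypothesis of Lemma \ref{lemma-existe-sec} by translating the geometric identity $d(C)=d(\{\alpha,\beta\})$ into a $K$-vector space equality, and then to invoke Theorem \ref{theorem-all-triangles} for the existence of a suitable triangle. First I would observe that Remark \ref{rem-vectordim} gives $\dim_K M_\gamma=\sum_{j\in\mathcal{T}} e(j,\gamma)$ for every arc $\gamma$. Since $X_C$ is the direct sum $\bigoplus_{\gamma_i\in C} X_{\gamma_i}$ in $\Cc$ and $\Hom_\Cc(T,-)$ is additive, this yields
\[
\dim_K M_C = d(C), \qquad \dim_K(M_\alpha\oplus M_\beta)=d(\{\alpha,\beta\}).
\]
Hence the hypothesis $d(C)=d(\{\alpha,\beta\})$ is precisely the dimension equality $\dim_K \Hom_\Cc(T,X_\alpha)+\dim_K \Hom_\Cc(T,X_\beta)=\dim_K \Hom_\Cc(T,X_C)$ required to apply Lemma \ref{lemma-existe-sec}.

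The forward direction is then immediate: any short exact sequence $0\to M_\alpha\to M_C\to M_\beta\to 0$ in $\mmod B$ is additive on $K$-dimensions, giving $\dim_K M_C=\dim_K M_\alpha+\dim_K M_\beta$, and therefore $d(C)=d(\{\alpha,\beta\})$. For the converse, I would take the non-split triangle $X_\alpha\to X_C\to X_\beta\to X_\alpha[1]$ furnished by Theorem \ref{theorem-all-triangles}, apply $\Hom_\Cc(T,-)$, and use Lemma \ref{lemma-existe-sec} with the dimension identity above to extract the short exact sequence $0\to M_\alpha\to M_C\to M_\beta\to 0$ in $\mmod B$.

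The remaining step is to certify that this sequence does not split. I would argue by Krull-Schmidt: a split sequence would force $M_C\simeq M_\alpha\oplus M_\beta$, and the correspondence between indecomposable objects of $\Cc/(T[1])$ and tagged arcs not in the shifted triangulation would then require $\alpha$ or $\beta$ to appear among the arc components of $C$. A direct inspection of the smoothings tabulated in Figures \ref{sec1}--\ref{sec4} rules this out, since none of the multicurves $\alpha^+\beta$ there contain either $\alpha$ or $\beta$ as a component. The statement about $E\in\alpha^-\beta$ then follows from the symmetry $\alpha^-\beta=\beta^+\alpha$ recorded just before the proposition, by swapping the roles of $\alpha$ and $\beta$.

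The principal obstacle is the non-splitness verification rather than the dimension count: one must track which components of $C$ are annihilated by $\Hom_\Cc(T,-)$ (those lying in $\add(T[1])$, for which the crossing-count formula correctly records a zero contribution to $d$) and then confirm that the surviving indecomposable summands of $M_C$ are, as a multiset, genuinely different from $\{M_\alpha,M_\beta\}$. Once this is done via the tagged-arc/indecomposable bijection, the rest of the argument is routine bookkeeping with the cluster tilting equivalence.
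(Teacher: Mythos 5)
Your proposal is correct and follows essentially the same route as the paper: convert $d(C)=d(\{\alpha,\beta\})$ via Remark \ref{rem-vectordim} into the dimension identity required by Lemma \ref{lemma-existe-sec}, and invoke Theorem \ref{theorem-all-triangles} for the triangle to which $\Hom_\Cc(T,-)$ is applied. The one place you go beyond what the paper writes is in spelling out the non-splitness of the resulting short exact sequence; the paper treats this as implicit, whereas you supply the Krull--Schmidt argument that $M_C\not\simeq M_\alpha\oplus M_\beta$ because no multicurve in $\alpha^+\beta$ contains $\alpha$ or $\beta$ as a component, which is a worthwhile and correct clarification of the claimed "direct consequence."
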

\begin{proof}
This is a direct consequence or Theorem \ref{theorem-all-triangles}, Lemma \ref{lemma-existe-sec} and Remark \ref{rem-vectordim}.
\end{proof}

Observe that this result looks similar to \cite[Theorem 3.7]{CSh}. If a pair of string modules $M_\alpha$ and $M_\beta$ in the category $\mmod B$, where $B$ is a Jacobian algebra arising from a surface without punctures, cross in a 3-cycle then $M_C$ does not preserve total dimension. This type of crossing is the only one, among the three presented by the authors, where the dimension is not preserved. 

However, the path to obtain our last result is different, see Remark \ref{Obs: otros resultados}.

%%%%%%%%%%%%%%%%%%%%%%
%%%%%%%%%%%%%%%%%%%%%%
%%%%%%%%%%%%%%%%%%%%%%

\section{Appendix: Examples in type $D$ and other surfaces}\label{EXAMPLES}

In this section, we will compute some examples of extensions given by punctured skein relations as shown in Figures \ref{sec1}, \ref{sec2}, \ref{sec3} and \ref{sec4}. The examples are not exclusively in type $D$, as the results in the previous section apply to  geometric configurations that resemble the punctured disk. The notation and results used in this section follow \cite{D,DWZ,L}, but we briefly recall them now.

Let ($Q(\tau)$, $P(\tau)$) be a quiver with potential for a  triangulation $\tau$ of the surface and $P(\tau)=\displaystyle{ \sum_{\triangle}S^{\triangle}} +S^{p}$  the potential defined by Labardini-Fragoso \cite{L}, where $\triangle$ runs over the set of the interior triangles in $\tau$ oriented in clockwise and $S^{p}$ is the cycle around the punctured $p$ oriented in counterclockwise.

A representation of a quiver with potential ($Q$,$P$) is a pair $(M,P)$, where $P$ is a potential and $M$ consists of the two following families.

\begin{itemize}
	\item[1)] A family $(M_{i})_{i\in Q_{0}}$ of  finite dimensional $K$ vector spaces.
	
	\item[2)] A family $(\varphi_{\alpha}:M_{s(\alpha)}\longrightarrow {}{M_{t(\alpha)}})_{\alpha \in Q_1}$ of $K$-linear transformations such that  $\partial_{\alpha}(P)=0$,   for all $\alpha \in Q_{1} $. Where $\partial_{a}$ is the cyclic derivative defined for all arrows $a\in Q_{1}$ and each cycle $a_{1}...a_{d}$ in $Q$:
\[\partial_{a}(a_{1}...a_{d})=\displaystyle{\sum_{i=1}^{d}}\delta_{a,a_{i}}a_{i+1}...a_{d}a_{1}...a_{i-1}\]
where $\delta_{a,a_{i}}$ is the Kronecker delta and extending linearly and continuously to obtain a morphism $\partial_{a}:\langle \langle R \rangle \rangle_{cyc} \rightarrow \langle \langle R \rangle \rangle$, for more details see \cite[Definition 3.1]{DWZ}. 

\end{itemize}

For an arc $j$  which does not belong to $\tau$, let $\gamma_{q_{0},q_{1}}=[q_{0},q_{1}]_{j}$ be the segment of  $j$ walking from $q_{0}$ to $q_{1}$. The segment $\gamma_{q_{0},q_{1}}$ {\it surrounds the punture $p$ counterclockwise}  if locally $\gamma_{q_{0},q_{1}}$ is the segment shown in  Figure  \ref{desviacion}. A {\it detour curve} $d_{q_{0},q_{1}}^{\Delta }$ is drawn inside $\Delta$  if  $\gamma_{q_{0},q_{1}}=[q_{0},q_{1}]_{j}$ surrounds the puncture $p$, as is the Figure \ref{desviacion}.

\vspace{-3mm}
\begin{figure}[H]
\centering				

\subfigure{\begin{tikzpicture}[scale=0.65]

							% configuracion
\filldraw [black] (-5,0)  circle (2pt)
			(-5,-4)  circle (2pt)
			(-8.5,-2)  circle (2pt)
			(-9.5,-2)  circle (1pt)
			(-9.5,-2.5)  circle (1pt)
			(-9.5,-1.5)  circle (1pt);
			
\draw  (-6,-2)node{$\Delta$};
 \draw[line width=1pt] (-5, 0) -- (-5, -4)
	node[pos=0.4,left] {$\overline{j}$};
\draw[line width=1pt] (-5, 0) -- (-8.5, -2);
\draw[line width=1pt] (-5, -4) -- (-8.5, -2);
\draw[line width=1pt] (-8.5, -2) -- (-8.5,0);
\draw[line width=1pt] (-8.5, -2) -- (-10.5,-0.5);
\draw[line width=1pt] (-8.5, -2) -- (-10.5,-3.5);
\draw[line width=1pt] (-8.5, -2) -- (-8.5,-4)
node[pos=0.05,left] {$p$};
%gamma		1		

\filldraw [red] (-5,-1)  circle (2pt)
		      (-5,-3)  circle (2pt)
		      (-6.35,-3.2)  circle (2pt)
		      (-8.5,-3.3)  circle (2pt)
		      (-9.8,-3)  circle (2pt)
		      (-9.8,-1)  circle (2pt)
		      (-8.5,-0.45)  circle (2pt)
		      (-6.05,-0.6)  circle (2pt);

\draw[color=red][line width=1.5pt] (-5,-1)  .. controls(-8, 0.5) and (-10.5,-1) ..  (-10.5,-2)
			node[pos=0,right] {$q_{0}$}
			node[pos=0.11,above] {$r_{1}$}
			node[pos=0.4,above] {$r_{2}$};
			
\draw[color=red][line width=1.5pt] (-5,-3)  .. controls(-8, -3.5) and (-10.5,-3.5) ..  (-10.5,-2)
			node[pos=0,right] {$q_{1}$}
			node[pos=0.15,below] {$r_{l}$};
		
\draw[->][color=brown][line width=1pt] (-6.05,-0.7)  --   (-5.1,-2.9)
			node[pos=0.3,left] {$d_{q_{0},q_{1}}^{\Delta }$};

	 \end{tikzpicture}
 		
}							

\caption{Detour curve.}\label{desviacion}
						
 \end{figure}
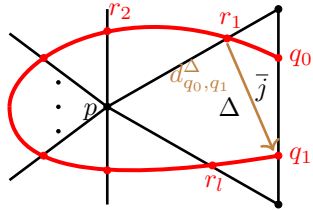	

Let $j$ be an arc that does not belong to $\tau$, an arc representation is defined. In case of a punctured disk the arc representation $M_j =(M,P)$ is defined easily following \cite[Definition 34]{D}.

\smallskip

For every vertex $i\in Q_{0}$ the $K$ vector space $M_i$ is defined as:

\begin{itemize}
	\item[1)] If the arc $j$ is not the arc incident at $p$ labeled $\bowtie$ then $M_{i}=K^{{\mathds A}}$, where ${\mathds A}$ is the number of intersection points of $j$ with $i$.

	\item[2)] Let $j$ be the arc incident at $p$ labeled $\bowtie$ and $q$ the other endpoint of $j$,  the arc $j$  is replaced by the loop $j^{o}$  based at $q$ which surrounds the puncture $p$. 

		\begin{itemize}
			\item[$\bullet$] If the arc $j'$ with endpoints $q$ and $p$ labeled plain belongs to $\tau$ then $M_{i}=K^{{\mathds A}}$, where ${{\mathds A}}$ is the number of  intersection points of $j^{o}$ with $i$.

			\item[$\bullet$]  If the arc $j'$ with endpoints $q$ and $p$ labeled plain  does not belong to $\tau$ then there is an arc $\overline{j}\in \tau$ not incident at $p$, such that $j^{o}=[q,q_{0}]_{j^{o}}\cup \gamma_{q_{0},q_{1}}\cup[q_{1},q]_{j^{o}}$ and $\gamma_{q_{0},q_{1}}$ surrounds the puncture $p$ counterclockwise and $q_{o},q_{1}\in \overline{j}$. The vector space is $M_{i}=K^{\mathds A}$ if $i\neq \overline{j}$, on the other hand,  $M_{i}=\frac{K^{A}}{K_{q_{0}}}$ for $i=\overline{j}$, where ${\mathds A}$ is the number of intersection points of $\gamma_{q_{0},q_{1}}\cup[q_{1},q]_{j^{o}}$  with the arc $i$ and $K_{q_{0}}$ is the copy of the field corresponding to the intersection point of $j^{o}$ with $\overline{j}$.

		\end{itemize}

\end{itemize}

For every arrow $\alpha: i\longrightarrow k$ in $Q_{1}$ the linear transformation $\varphi_{\alpha}$ is defined in two steps. First we define a linear transformation $\overline{\varphi}_{\alpha}$ and then $\varphi_{\alpha}$ it is defined modifying $\overline{\varphi}_{\alpha}$.

If $j$ is an arc with labeled plain and $b_{1},...,b_{m}$ and $c_{1},...,c_{t}$ is an enumeration of the intersection points of $j$ with $i$ and $k$ respectively. The entry $m_{c_{h},b_{l}}$ of the matrix $\overline{\varphi}_{\alpha}$ is defined as follows:

$$m_{c_{h},b_{l}}= \left\{ \begin{array}{lcc}
             1 &     \mbox{if the interior of $[b_{l},c_{h}]_{j}$ does not intersect any arc of $\tau$ }.\\
	  1  &  \mbox{if there is a segment $\gamma_{c_{h'},c_{h}}$ of $j$ which sourrounds the puncture $p$}\\
	    &  \mbox{counterclockwise and $r_{1}=b_{l}$}.\\
             0   & \mbox{else}

             \end{array}
   \right.$$

If $j$ is the arc incident at $p$ labeled  $\bowtie$ and $q$ is the other end of $j$ then;

\begin{itemize}
	
	\item[$\bullet$] The arc $j$ is replaced by the loop $j^{o}$ based on $q$ in the definition of $m_{c_{h},b_{l}}$ if the arc $j'$ with endpoints $q$ and $p$ labeled plain belongs to $\tau$.

	\item[$\bullet$] The arc $j$ is replaced by $\gamma_{q_{0},q_{1}}\cup[q_{1},q]_{j^{o}}$  in the definition of $m_{c_{h},b_{l}}$ if the arc $j'$ with endpoints $q$ and $p$ labeled plain does not belong to $\tau$.
	
\end{itemize}
 
Now, we will define the linear transformation $\varphi_{\alpha}$. If  $j$ is the arc incident at $p$  labeled  $\bowtie$ and $q$ the other endpoint of $j$  then;

\begin{itemize}
	
	\item[$\bullet$]$\varphi_{\alpha}=\overline{\varphi_{\alpha}}$ if the arc $j'$ with endpoints $q$ and $p$ labeled plain belongs to $\tau$.

	\item[$\bullet$] 

$$\varphi_{\alpha}= \left\{ \begin{array}{lcc}
             \overline{\varphi}_{\alpha} &     \mbox{if $i\neq \overline{j}\neq {k}$}\\
	   \overline{\varphi}^{q_{0}}_{\alpha}  &  \mbox{if $i=\overline{j}$}\\
	   ^{q_{0}}\overline{\varphi}_{\alpha}  &  \mbox{if $k=\overline{j}$}
             \end{array}
   \right.$$

if the arc $j'$ with endpoints $q$ and $p$  labeled plain does not belong to $\tau$. Where, $\overline{\varphi}^{q_{0}}_{\alpha}$ is the matrix obtained from  $\overline{\varphi}_{\alpha}$ by removing the column that corresponds to the intersection point $q_{0}$ of $\gamma_{q_{0},q_{1}}\cup[q_{1},q]_{j^{o}}$ with $i$, and $^{q_{0}}\overline{\varphi}_{\alpha}$ is the matrix obtained from  $\overline{\varphi}_{\alpha}$ by removing the arrow  which corresponds to the intersection point $q_{0}$ of $\gamma_{q_{0},q_{1}}\cup[q_{1},q]_{j^{o}}$ with $k$.
	
\end{itemize}

On the other hand, if $j$ is an arc labeled plain in both endpoints then $\varphi_{\alpha}=\overline{\varphi}_{\alpha}$.

%%%%%%%%%%%%%%%%%%%
%%%%%%%%%%%%%%%%%%%

\subsection{Examples on the disk}

\begin{ex} See the next arcs crossing over a triangulated punctured disk.

\vspace{-4mm}
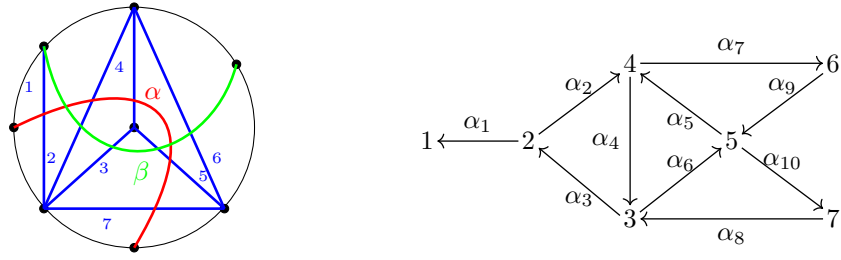
\begin{figure}[H]
\begin{tabular}{ p{50mm}  p{60mm}}
\begin{tikzpicture}[scale=0.8]

% configuracion

\draw (0,0) circle (2cm);    

\filldraw [black] 	(0,0)  circle (2pt)
			(1.7,1.05)  circle (2pt)
			(-1.5,1.35)  circle (2pt)
			(-2,0)  circle (2pt)
			(1.5,-1.35)  circle (2pt)
			(-1.5,-1.35)  circle (2pt)
			(0,-2)  circle (2pt)
			(0,2)  circle (2pt);

%triangulación
\draw[color=blue][line width=1pt] (-1.5,1.35) -- (-1.5,-1.35)
node[pos=0.25,left] {{\tiny1}};	
\draw[color=blue][line width=1pt] (0,2) -- (-1.5,-1.35)
node[pos=0.75,left] {{\tiny2}};
\draw[color=blue][line width=1pt] (0,2) -- (0,0)
node[pos=0.5,left] {{\tiny4}};
\draw[color=blue][line width=1pt] (0,2) -- (1.5,-1.35)
node[pos=0.75,right] {{\tiny6}};
\draw[color=blue][line width=1pt] (0,0) -- (-1.5,-1.35)
node[pos=0.5,right] {{\tiny3}};
\draw[color=blue][line width=1pt] (0,0) -- (1.5,-1.35)
node[pos=0.6,right] {{\tiny5}};
\draw[color=blue][line width=1pt] (-1.5,-1.35) -- (1.5,-1.35)
node[pos=0.35,below] {{\tiny7}};

%alpha
\draw[color=red][line width=1pt] (-2,0) .. controls(0, 1) and (1.5,0.5) ..  (0,-2)
node[pos=0.5,above] {$\alpha$};

%beta
\draw[color=green][line width=1pt] (-1.5,1.35) .. controls(-1, -1.2) and (1.25,-0.65) ..  (1.7,1.05)
node[pos=0.5,below] {$\beta$};

 \end{tikzpicture}

&
\begin{tikzpicture}[scale=0.9]

% configuracion

\draw  (-4,0)node{$1$};
\draw  (-2.5,0)node{$2$};
\draw  (-1,1.15)node{$4$};
\draw  (-1,-1.15)node{$3$};
\draw  (0.5,0)node{$5$};
\draw  (2,1.15)node{$6$};
\draw  (2,-1.15)node{$7$};

%triangulación
\draw[->][line width=0.5pt] (-2.65,0) -- (-3.85,0)
node[pos=0.5,above] {$\alpha_{1}$};	

\draw[->][line width=0.5pt] (-2.35,0.1) -- (-1.15,1)
node[pos=0.5,above] {$\alpha_{2}$};	

\draw[->][line width=0.5pt] (-1.15,-1.1) -- (-2.35,-0.1)
node[pos=0.5,below] {$\alpha_{3}$};	

\draw[->][line width=0.5pt] (-1,0.95) -- (-1,-0.95)
node[pos=0.5,left] {$\alpha_{4}$};

\draw[->][line width=0.5pt] (0.35,0.1) -- (-0.85,1)
node[pos=0.5,below] {$\alpha_{5}$};	

\draw[->][line width=0.5pt] (-0.855,-1.05) -- (0.35,-0.1)
node[pos=0.5,above] {$\alpha_{6}$};	

\draw[->][line width=0.5pt] (-0.85,1.15) -- (1.85,1.15)
node[pos=0.5,above] {$\alpha_{7}$};	

\draw[<-][line width=0.5pt] (-0.85,-1.15) -- (1.85,-1.15)
node[pos=0.5,below] {$\alpha_{8}$};	

\draw[->][line width=0.5pt] (1.85,1) -- (0.65,0.1)
node[pos=0.5,above] {$\alpha_{9}$};	

\draw[->][line width=0.5pt]  (0.65,-0.1) -- (1.85,-1)
node[pos=0.5,above] {$\alpha_{10}$};	

 \end{tikzpicture}

%\\[1mm]

%\ \  \  \  \ \     (a) Triangulation $\tau$ of the disk. &\ \  \  \ \ \ \  \  \ \ \  \  \  \ \  \  \  \ \  \  \  (b) Quiver $Q(\tau)$.

%\\[1mm]

\end{tabular}

\caption{Left: triangulation $\tau$ of the disk and arcs $\alpha$ and $\beta$. Right: quiver $Q(\tau)$}\label{arcos}
\end{figure}

See Figure \ref{arcos}. The arc representations  $M_\alpha$ and $M_\beta$ are the following,

\begin{figure}[H]
\begin{tabular}{ p{70mm}  p{70mm}}
\begin{tikzpicture}[scale=0.9 ]

% configuracion

\draw  (-4,0)node{$K$};
\draw  (-2.5,0)node{$K$};
\draw  (-1,1.15)node{$K$};
\draw  (-1,-1.15)node{$0$};
\draw  (0.5,0)node{$K$};
\draw  (2,1.15)node{$0$};
\draw  (2,-1.15)node{$K$};

%triangulación
\draw[->][line width=0.5pt] (-2.65,0) -- (-3.85,0)
node[pos=0.5,above] {$1$};	

\draw[->][line width=0.5pt] (-2.35,0.1) -- (-1.15,1)
node[pos=0.5,above] {$1$};	

\draw[->][line width=0.5pt] (-1.15,-1.1) -- (-2.35,-0.1)
node[pos=0.5,below] {$0$};	

\draw[->][line width=0.5pt] (-1,0.95) -- (-1,-0.95)
node[pos=0.5,left] {$0$};	

\draw[->][line width=0.5pt] (0.35,0.1) -- (-0.85,1)
node[pos=0.5,below] {$1$};	

\draw[->][line width=0.5pt](-0.855,-1.05) -- (0.3,-0.15)
node[pos=0.5,above] {$0$};	

\draw[->][line width=0.5pt] (-0.85,1.15) -- (1.85,1.15)
node[pos=0.5,above] {$0$};	

\draw[<-][line width=0.5pt] (-0.85,-1.15) -- (1.85,-1.15)
node[pos=0.5,below] {$0$};	

\draw[->][line width=0.5pt] (1.85,1) -- (0.7,0.15)
node[pos=0.5,above] {$0$};	

\draw[->][line width=0.5pt]  (0.65,-0.1) -- (1.85,-1)
node[pos=0.5,above] {$1$};	

%gamma

 \end{tikzpicture}

&
\begin{tikzpicture}[scale=0.9]

% configuracion

\draw  (-4,0)node{$0$};
\draw  (-2.5,0)node{$K$};
\draw  (-1,1.15)node{$0$};
\draw  (-1,-1.15)node{$K$};
\draw  (0.5,0)node{$K$};
\draw  (2,1.15)node{$K$};
\draw  (2,-1.15)node{$0$};

%triangulación
\draw[->][line width=0.5pt] (-2.65,0) -- (-3.85,0)
node[pos=0.5,above] {$0$};	

\draw[->][line width=0.5pt] (-2.35,0.1) -- (-1.15,1)
node[pos=0.5,above] {$0$};	

\draw[->][line width=0.5pt] (-1.15,-1.1) -- (-2.35,-0.1)
node[pos=0.5,below] {$1$};	

\draw[->][line width=0.5pt] (-1,0.95) -- (-1,-0.95)
node[pos=0.5,left] {$0$};	

\draw[->][line width=0.5pt] (0.35,0.1) -- (-0.85,1)
node[pos=0.5,below] {$0$};	

\draw[->][line width=0.5pt] (-0.855,-1.05) -- (0.3,-0.15)
node[pos=0.5,above] {$1$};	

\draw[->][line width=0.5pt] (-0.85,1.15) -- (1.85,1.15)
node[pos=0.5,above] {$0$};	

\draw[<-][line width=0.5pt] (-0.85,-1.15) -- (1.85,-1.15)
node[pos=0.5,below] {$0$};	

\draw[->][line width=0.5pt] (1.85,1) -- (0.7,0.15)
node[pos=0.5,above] {$1$};

\draw[->][line width=0.5pt]  (0.65,-0.1) -- (1.85,-1)
node[pos=0.5,above] {$0$};	

%gamma

 \end{tikzpicture}

%\\[2mm]

%\ \  \  \  \ \  \  \  \ \  \  \  \ \  \  \  \ \  \  \  \ \  \  \  \ \  \  \  \ \  \  \   (a) $M_{\alpha}$ &\ \  \  \  \ \  \  \  \ \  \  \  \ \  \  \  \ \  \  \  \ \  \  \  \ \  \  \  \ \  \  \   (b) $M_{\beta}$

%\\[2mm]

\end{tabular}

\caption{Arc representations  $M_{\alpha}$ (left) and $M_{\beta}$ (right).}\label{rep alpha beta}
\end{figure}
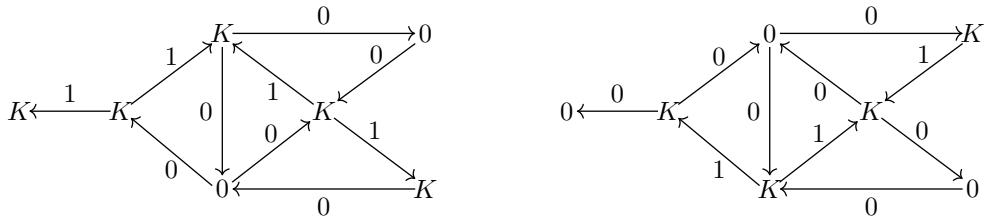

The possible four middle terms are defined by four multicurves (a)-(d). By Proposition \ref{prop:preserva-dimension}, we see that in the four cases the number of crossings of each multicurve with the triangulation $\tau$ is preserved so each one of the multicurves in Figure \ref{terminos medios 1} (a)-(d) defines a middle term in an extension.  

\begin{figure}[H]
\begin{tabular}{ p{60mm}  p{50mm}}
\begin{tikzpicture}[scale=0.8]

% configuracion

\draw (0,0) circle (2cm);    
\draw (0,0) circle (2cm);    

\filldraw [black] 	(0,0)  circle (2pt)
			(1.7,1.05)  circle (2pt)
			(-1.5,1.35)  circle (2pt)
			(-2,0)  circle (2pt)
			(1.5,-1.35)  circle (2pt)
			(-1.5,-1.35)  circle (2pt)
			(0,-2)  circle (2pt)
			(0,2)  circle (2pt);

%triangulación
\draw[color=blue][line width=1pt] (-1.5,1.35) -- (-1.5,-1.35)
node[pos=0.25,left] {{\tiny1}};	
\draw[color=blue][line width=1pt] (0,2) -- (-1.5,-1.35)
node[pos=0.75,left] {{\tiny2}};
\draw[color=blue][line width=1pt] (0,2) -- (0,0)
node[pos=0.5,left] {{\tiny4}};
\draw[color=blue][line width=1pt] (0,2) -- (1.5,-1.35)
node[pos=0.75,right] {{\tiny6}};
\draw[color=blue][line width=1pt] (0,0) -- (-1.5,-1.35)
node[pos=0.5,right] {{\tiny3}};
\draw[color=blue][line width=1pt] (0,0) -- (1.5,-1.35)
node[pos=0.6,right] {{\tiny5}};
\draw[color=blue][line width=1pt] (-1.5,-1.35) -- (1.5,-1.35)
node[pos=0.35,below] {{\tiny7}};

\draw(0,-2.3)node{$p$};
\draw(-2.3,0)node{$n$};
\draw(-1.55,1.6) node{$r$};
\draw(1.8,1.3)node{$s$};

%gamma1
\draw[color=red][line width=1pt] (-2,0) --  (0,0)
node[pos=0.4,above] {{\tiny $\gamma_{1}$}};

\draw  [color=red](-0.23,0)node{\rotatebox{90}{$\bowtie$}};
%gamma2

\draw[color=red][line width=1pt] (-1.5,1.35) --  (0,0)
node[pos=0.4,above] {{\tiny $\gamma_{2}$}};

%gamma3
\draw[color=red][line width=1pt] (1.7,1.05) -- (0,-2)
node[pos=0.2,left] {{\tiny $\gamma_{3}$}};

 \end{tikzpicture}

&
\begin{tikzpicture}[scale=0.8]

% configuracion

\draw (0,0) circle (2cm);    
\draw (0,0) circle (2cm);    

\filldraw [black] 	(0,0)  circle (2pt)
			(1.7,1.05)  circle (2pt)
			(-1.5,1.35)  circle (2pt)
			(-2,0)  circle (2pt)
			(1.5,-1.35)  circle (2pt)
			(-1.5,-1.35)  circle (2pt)
			(0,-2)  circle (2pt)
			(0,2)  circle (2pt);

%triangulación
\draw[color=blue][line width=1pt] (-1.5,1.35) -- (-1.5,-1.35)
node[pos=0.25,left] {{\tiny1}};	
\draw[color=blue][line width=1pt] (0,2) -- (-1.5,-1.35)
node[pos=0.75,left] {{\tiny2}};
\draw[color=blue][line width=1pt] (0,2) -- (0,0)
node[pos=0.5,left] {{\tiny4}};
\draw[color=blue][line width=1pt] (0,2) -- (1.5,-1.35)
node[pos=0.75,right] {{\tiny6}};
\draw[color=blue][line width=1pt] (0,0) -- (-1.5,-1.35)
node[pos=0.5,right] {{\tiny3}};
\draw[color=blue][line width=1pt] (0,0) -- (1.5,-1.35)
node[pos=0.6,right] {{\tiny5}};
\draw[color=blue][line width=1pt] (-1.5,-1.35) -- (1.5,-1.35)
node[pos=0.35,below] {{\tiny7}};

\draw(0,-2.3)node{$p$};
\draw(-2.3,0)node{$n$};
\draw(-1.55,1.6) node{$r$};
\draw(1.8,1.3)node{$s$};

%gamma1
\draw[color=red][line width=1pt] (-2,0) --  (0,0)
node[pos=0.4,above] {{\tiny $\gamma_{1}$}};

%gamma2
\draw  [color=red](-0.18,0.18)node{\rotatebox{55}{$\bowtie$}};
\draw[color=red][line width=1pt] (-1.5,1.35) --  (0,0)
node[pos=0.4,above] {{\tiny $\gamma_{2}$}};

%gamma3
\draw[color=red][line width=1pt] (1.7,1.05) -- (0,-2)
node[pos=0.2,left] {{\tiny $\gamma_{3}$}};

 \end{tikzpicture}

%\\[2mm]

 %\ \  \ \  \  \  \ \  \  \    (a) Item (6) of Figure \ref{sec3}. & \  \  \ \  \  \  \ \  \  \  (b) Item (7) of Figure \ref{sec3}.

%\\[2mm]

\end{tabular}

\end{figure}

\vspace{-14mm}

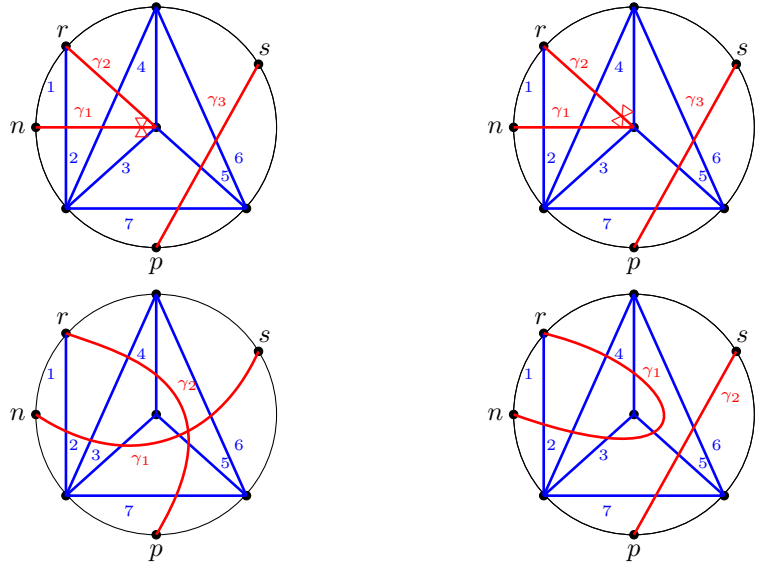
\begin{figure}[H]
\begin{tabular}{ p{60mm}  p{50mm}}
\begin{tikzpicture}[scale=0.8]

% configuracion

\draw (0,0) circle (2cm);    

\filldraw [black] 	(0,0)  circle (2pt)
			(1.7,1.05)  circle (2pt)
			(-1.5,1.35)  circle (2pt)
			(-2,0)  circle (2pt)
			(1.5,-1.35)  circle (2pt)
			(-1.5,-1.35)  circle (2pt)
			(0,-2)  circle (2pt)
			(0,2)  circle (2pt);

%triangulación
\draw[color=blue][line width=1pt] (-1.5,1.35) -- (-1.5,-1.35)
node[pos=0.25,left] {{\tiny1}};	
\draw[color=blue][line width=1pt] (0,2) -- (-1.5,-1.35)
node[pos=0.75,left] {{\tiny2}};
\draw[color=blue][line width=1pt] (0,2) -- (0,0)
node[pos=0.5,left] {{\tiny4}};
\draw[color=blue][line width=1pt] (0,2) -- (1.5,-1.35)
node[pos=0.75,right] {{\tiny6}};
\draw[color=blue][line width=1pt] (0,0) -- (-1.5,-1.35)
node[pos=0.5,left] {{\tiny3}};
\draw[color=blue][line width=1pt] (0,0) -- (1.5,-1.35)
node[pos=0.6,right] {{\tiny5}};
\draw[color=blue][line width=1pt] (-1.5,-1.35) -- (1.5,-1.35)
node[pos=0.35,below] {{\tiny7}};
%gamma

\draw(0,-2.3)node{$p$};
\draw(-2.3,0)node{$n$};
\draw(-1.55,1.6) node{$r$};
\draw(1.8,1.3)node{$s$};

%gamma1
\draw[color=red][line width=1pt] (-2,0) .. controls(-0.5, -1) and (1,-0.5) ..  (1.7,1.05)
node[pos=0.4,below] {{\tiny $\gamma_{1}$}};

%gamma2
\draw[color=red][line width=1pt] (-1.5,1.35) .. controls(-0.5, 1) and (1.5,0.5) ..  (0,-2)
node[pos=0.5,right] {{\tiny $\gamma_{2}$}};

 \end{tikzpicture}

&

\begin{tikzpicture}[scale=0.8]

% configuracion

\draw (0,0) circle (2cm);    
\draw (0,0) circle (2cm);    

\filldraw [black] 	(0,0)  circle (2pt)
			(1.7,1.05)  circle (2pt)
			(-1.5,1.35)  circle (2pt)
			(-2,0)  circle (2pt)
			(1.5,-1.35)  circle (2pt)
			(-1.5,-1.35)  circle (2pt)
			(0,-2)  circle (2pt)
			(0,2)  circle (2pt);

%triangulación
\draw[color=blue][line width=1pt] (-1.5,1.35) -- (-1.5,-1.35)
node[pos=0.25,left] {{\tiny1}};	
\draw[color=blue][line width=1pt] (0,2) -- (-1.5,-1.35)
node[pos=0.75,left] {{\tiny2}};
\draw[color=blue][line width=1pt] (0,2) -- (0,0)
node[pos=0.5,left] {{\tiny4}};
\draw[color=blue][line width=1pt] (0,2) -- (1.5,-1.35)
node[pos=0.75,right] {{\tiny6}};
\draw[color=blue][line width=1pt] (0,0) -- (-1.5,-1.35)
node[pos=0.5,right] {{\tiny3}};
\draw[color=blue][line width=1pt] (0,0) -- (1.5,-1.35)
node[pos=0.6,right] {{\tiny5}};
\draw[color=blue][line width=1pt] (-1.5,-1.35) -- (1.5,-1.35)
node[pos=0.35,below] {{\tiny7}};

\draw(0,-2.3)node{$p$};
\draw(-2.3,0)node{$n$};
\draw(-1.55,1.6) node{$r$};
\draw(1.8,1.3)node{$s$};

%gamma1
\draw[color=red][line width=1pt] (-2,0) .. controls(1.5, -1.25) and (1,0.75) ..  (-1.5,1.35)
node[pos=0.75,right] {{\tiny $\gamma_{1}$}};

%gamma2
\draw[color=red][line width=1pt] (0,-2) --  (1.7,1.05)
node[pos=0.75,right] {{\tiny $\gamma_{2}$}};

 \end{tikzpicture}

\end{tabular}

\caption{Multicurves (a), (b), (c) and (d) -in lexicographic order- corresponding to items (6), (7), (8) and (9) of Figures \ref{sec3} and \ref{sec4}.}\label{terminos medios 1}
\end{figure}

To compute $M_{\gamma_{1}}\oplus M_{\gamma_{2}}\oplus M_{\gamma_{3}}$ in (a) Figure \ref{terminos medios 1}, the arc representations $M_{\gamma_{2}}$ and $M_{\gamma_{3}}$ are computed easily, while to compute $M_{\gamma_{1}}$, the arc $\gamma_{1}$  is replaced by the curve $\gamma_{q_{0},q_{1}}\cup[q_{1},n]_{j^{o}}$ and the detour curve is drawn. The arc representation computation for (b) is very similar to (a) and it is omitted.
See the arc representations in Figure \ref{arc rep p} and the extensions in Figure \ref{sucesiones exactas}.

\vspace{-2mm}

 \begin{figure}[H]
\centering				

\subfigure{\begin{tikzpicture}[scale=0.9]

% configuracion

\draw (0,0) circle (2cm);    
\draw (0,0) circle (2cm);    

\filldraw [black] 	(0,0)  circle (2pt)
			(1.7,1.05)  circle (2pt)
			(-1.5,1.35)  circle (2pt)
			(-2,0)  circle (2pt)
			(1.5,-1.35)  circle (2pt)
			(-1.5,-1.35)  circle (2pt)
			(0,-2)  circle (2pt)
			(0,2)  circle (2pt);

%triangulación
\draw[color=blue][line width=1pt] (-1.5,1.35) -- (-1.5,-1.35)
node[pos=0.25,left] {{\tiny1}};	
\draw[color=blue][line width=1pt] (0,2) -- (-1.5,-1.35)
node[pos=0.25,left] {{\tiny2}};
\draw[color=blue][line width=1pt] (0,2) -- (0,0)
node[pos=0.5,left] {{\tiny4}};
\draw[color=blue][line width=1pt] (0,2) -- (1.5,-1.35)
node[pos=0.75,right] {{\tiny6}};
\draw[color=blue][line width=1pt] (0,0) -- (-1.5,-1.35)
node[pos=0.35,right] {{\tiny3}};
\draw[color=blue][line width=1pt] (0,0) -- (1.5,-1.35)
node[pos=0.6,right] {{\tiny5}};
\draw[color=blue][line width=1pt] (-1.5,-1.35) -- (1.5,-1.35)
node[pos=0.35,below] {{\tiny7}};

\draw(-2.3,0)node{$n$};

%gamma1
\draw[color=red][line width=0.5pt] (-2,0) --  (0,0);
\draw  [color=red](-0.23,0)node{\rotatebox{90}{$\bowtie$}};
\draw[->][color=brown][line width=0.75pt] (-0.85,-0.7) --  (-0.6,0.4)
node[pos=0.4,right] {{\tiny $d$}};
%Curve
\draw[color=magenta][line width=1pt] (-2,0) .. controls(1, 1.5) and (1,-1) .. (-1.25,-0.75);
\draw[color=magenta](-1.35,-0.7)node{{\tiny $q_{0}$}};	
\draw[color=magenta](-0.9,0.6)node{{\tiny $q_{1}$}};	
	 \end{tikzpicture}
 	
}

\caption{ Curve $\gamma_{q_{0},q_{1}}\cup[q_{1},n]_{j^{o}}$.}
						
 \end{figure}
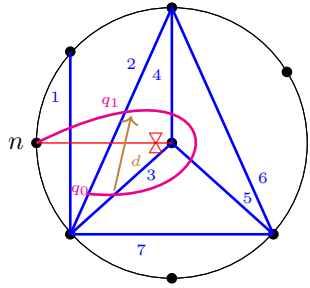

Then, the arc representations of the middle terms (a), (c) and (d) -in lexicographic order- from Figure \ref{terminos medios 1} are the following

\begin{figure}[H]
%\subfigure[$M_{\gamma_{1}}\oplus M_{\gamma_{2}}\oplus M_{\gamma_{3}}$ Figure \ref{terminos medios 1}.]{
\begin{tikzpicture}[scale=0.85]
% configuracion

\draw  (-4,0)node{$K$};
\draw  (-2.5,0)node{$K^{2}$};
\draw  (-1,1.15)node{$K$};
\draw  (-1,-1.15)node{$K$};
\draw  (0.5,0)node{$K^{2}$};
\draw  (2,1.15)node{$K$};
\draw  (2,-1.15)node{$K$};

%triangulación
\draw[->][line width=0.5pt] (-2.72,0) -- (-3.85,0)
node[pos=0.5,above] {{\tiny $ \begin{bmatrix}
1&0\\
\end{bmatrix} $}};	

\draw[->][line width=0.5pt] (-2.27,0.18) -- (-1.15,1)
node[pos=0.5,left] {{\tiny $ \begin{bmatrix}
1&0\\
\end{bmatrix} $}};

\draw[->][line width=0.5pt] (-1.15,-1.1) -- (-2.35,-0.1)
node[pos=0.5,below] {{\tiny $ \begin{bmatrix}
1\\
0
\end{bmatrix} $}};

\draw[->][line width=0.5pt] (-1,0.95) -- (-1,-0.95)
node[pos=0.5,left] {$0$};

\draw[->][line width=0.5pt] (0.3,0.15) -- (-0.85,1)
node[pos=0.5,right] {{\tiny $ \begin{bmatrix}
1&0\\
\end{bmatrix} $}};

\draw[->][line width=0.5pt] (-0.855,-1.05) -- (0.27,-0.15)
node[pos=0.5,above] {{\tiny $ \begin{bmatrix}
1\\
0
\end{bmatrix} $}};	

\draw[->][line width=0.5pt] (-0.85,1.15) -- (1.85,1.15)
node[pos=0.5,above] {$0$};	

\draw[<-][line width=0.5pt] (-0.85,-1.15) -- (1.85,-1.15)
node[pos=0.5,below] {$0$};	

\draw[->][line width=0.5pt] (1.85,1) -- (0.72,0.17)
node[pos=0.35,below]  {{\tiny $ \begin{bmatrix}
0\\
1
\end{bmatrix} $}};

\draw[->][line width=0.5pt]  (0.65,-0.1) -- (1.85,-1)
node[pos=0.65,right]{{\tiny $ \begin{bmatrix}
0&1\\
\end{bmatrix} $}};	

\end{tikzpicture}
\end{figure}

 %\ \  \ \  \  \  \ \  \  \    (a) $M_{\gamma_{1}}\oplus M_{\gamma_{2}}\oplus M_{\gamma_{3}}$ Figure \ref{terminos medios 1}. & \  \  \ \  \  \  \ \  \  \  (b) $M_{\gamma_{1}}\oplus M_{\gamma_{2}}\oplus M_{\gamma_{3}}$ Figure \ref{terminos medios 1}.

%\caption{Possible middle terms}\label{terminos medios}
%}

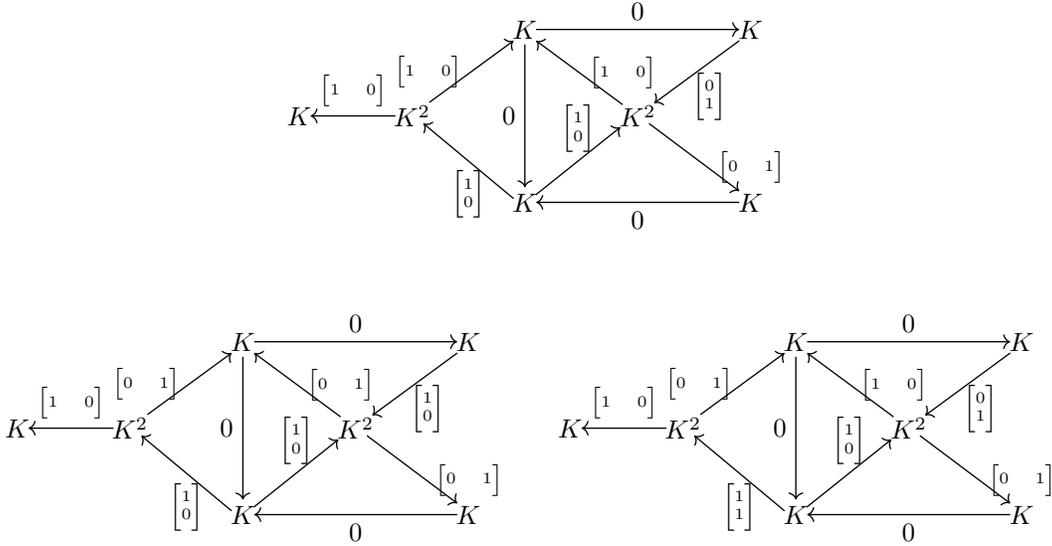
\begin{figure}[H]
\begin{tabular}{ p{70mm}  p{70mm}}
\begin{tikzpicture}[scale=0.85]
% configuracion

\draw  (-4,0)node{$K$};
\draw  (-2.5,0)node{$K^{2}$};
\draw  (-1,1.15)node{$K$};
\draw  (-1,-1.15)node{$K$};
\draw  (0.5,0)node{$K^{2}$};
\draw  (2,1.15)node{$K$};
\draw  (2,-1.15)node{$K$};

%triangulación
\draw[->][line width=0.5pt] (-2.72,0) -- (-3.85,0)
node[pos=0.5,above] {{\tiny $ \begin{bmatrix}
1&0\\
\end{bmatrix} $}};	

\draw[->][line width=0.5pt] (-2.27,0.18) -- (-1.15,1)
node[pos=0.5,left] {{\tiny $ \begin{bmatrix}
0&1\\
\end{bmatrix} $}};

\draw[->][line width=0.5pt] (-1.15,-1.1) -- (-2.35,-0.1)
node[pos=0.5,below] {{\tiny $ \begin{bmatrix}
1\\
0
\end{bmatrix} $}};

\draw[->][line width=0.5pt] (-1,0.95) -- (-1,-0.95)
node[pos=0.5,left] {$0$};

\draw[->][line width=0.5pt] (0.3,0.15) -- (-0.85,1)
node[pos=0.5,right] {{\tiny $ \begin{bmatrix}
0&1\\
\end{bmatrix} $}};

\draw[->][line width=0.5pt] (-0.855,-1.05) -- (0.27,-0.15)
node[pos=0.5,above] {{\tiny $ \begin{bmatrix}
1\\
0
\end{bmatrix} $}};	

\draw[->][line width=0.5pt] (-0.85,1.15) -- (1.85,1.15)
node[pos=0.5,above] {$0$};	

\draw[<-][line width=0.5pt] (-0.85,-1.15) -- (1.85,-1.15)
node[pos=0.5,below] {$0$};	

\draw[->][line width=0.5pt] (1.85,1) -- (0.72,0.17)
node[pos=0.35,below]  {{\tiny $ \begin{bmatrix}
1\\
0
\end{bmatrix} $}};

\draw[->][line width=0.5pt]  (0.65,-0.1) -- (1.85,-1)
node[pos=0.65,right]{{\tiny $ \begin{bmatrix}
0&1\\
\end{bmatrix} $}};	

 \end{tikzpicture}

&
\begin{tikzpicture}[scale=0.85]

% configuracion

\draw  (-4,0)node{$K$};
\draw  (-2.5,0)node{$K^{2}$};
\draw  (-1,1.15)node{$K$};
\draw  (-1,-1.15)node{$K$};
\draw  (0.5,0)node{$K^{2}$};
\draw  (2,1.15)node{$K$};
\draw  (2,-1.15)node{$K$};

%triangulación
\draw[->][line width=0.5pt] (-2.72,0) -- (-3.85,0)
node[pos=0.5,above] {{\tiny $ \begin{bmatrix}
1&0\\
\end{bmatrix} $}};	

\draw[->][line width=0.5pt] (-2.27,0.18) -- (-1.15,1)
node[pos=0.5,left] {{\tiny $ \begin{bmatrix}
0&1\\
\end{bmatrix} $}};

\draw[->][line width=0.5pt] (-1.15,-1.1) -- (-2.35,-0.1)
node[pos=0.5,below] {{\tiny $ \begin{bmatrix}
1\\
1
\end{bmatrix} $}};

\draw[->][line width=0.5pt] (-1,0.95) -- (-1,-0.95)
node[pos=0.5,left] {$0$};

\draw[->][line width=0.5pt] (0.3,0.15) -- (-0.85,1)
node[pos=0.5,right] {{\tiny $ \begin{bmatrix}
1&0\\
\end{bmatrix} $}};

\draw[->][line width=0.5pt] (-0.855,-1.05) -- (0.27,-0.15)
node[pos=0.5,above] {{\tiny $ \begin{bmatrix}
1\\
0
\end{bmatrix} $}};	

\draw[->][line width=0.5pt] (-0.85,1.15) -- (1.85,1.15)
node[pos=0.5,above] {$0$};	

\draw[<-][line width=0.5pt] (-0.85,-1.15) -- (1.85,-1.15)
node[pos=0.5,below] {$0$};	

\draw[->][line width=0.5pt] (1.85,1) -- (0.72,0.17)
node[pos=0.35,below]  {{\tiny $ \begin{bmatrix}
0\\
1
\end{bmatrix} $}};

\draw[->][line width=0.5pt]  (0.65,-0.1) -- (1.85,-1)
node[pos=0.65,right]{{\tiny $ \begin{bmatrix}
0&1\\
\end{bmatrix} $}};

 \end{tikzpicture}

%\\[2mm]

% \ \  \ \  \  \  \ \  \  \    (c) $M_{\gamma_{1}}\oplus M_{\gamma_{2}}$ Figure \ref{terminos medios 1}. & \  \  \ \  \  \  \ \  \  \  (d) $M_{\gamma_{1}}\oplus M_{\gamma_{2}}$ Figure \ref{terminos medios 1}.

%\\[2mm]

\end{tabular}

\caption{Arc representations corresponding to middle terms (a), (c) and (d).}\label{arc rep p}
\end{figure}

The extensions (a), (c) and (d) for our example are the following;

\begin{figure}[H]
%\subfigure[of Figure \ref{terminos medios 1}.]
{
\begin{tikzpicture}[scale=1]

\draw  (-3.5,0)node{$0$};
\draw  (-2.5,0)node{$M_{\alpha}$};
\draw  (-0.5,0)node{{\tiny $M_{\gamma_{1}}\oplus M_{\gamma_{2}}\oplus M_{\gamma_{3}}$}};
\draw  (1.75,0)node{$M_{\beta}$};
\draw  (2.75,0)node{$0$};

\draw[->][line width=0.5pt] (-3.25,0) -- (-2.85,0);
\draw[->][line width=0.5pt] (-2.2,0) -- (-1.75,0);
\draw[->][line width=0.5pt] (0.6,0) -- (1.45,0);
\draw[->][line width=0.5pt] (2.05,0) -- (2.6,0);

\draw  (-1.95,0.25)node{$f$};
\draw  (-1.95,0.75)node{$||$};
\draw  (-1.75,2)node{{\tiny $ \left\{ \begin{array}{lc}
             1 &     \mbox{if $i=1,4,7$}\\
              &     \\
	   \begin{bmatrix}
1 \\
1
\end{bmatrix}  &  \mbox{if $i=2,5$}\\
              &     \\
	   0  &  \mbox{if $i=3,6$}
             \end{array}
   \right.$}};

\draw  (1.25,0.25)node{$g$};
\draw  (1.25,0.75)node{$||$};
\draw  (1.5,2)node{{\tiny $ \left\{ \begin{array}{lc}
             1 &     \mbox{if $i=6$}\\
	 -1 &     \mbox{if $i=3$}\\
              &     \\
	   \begin{bmatrix}
-1&1
\end{bmatrix}  &  \mbox{if $i=2,5$}\\
              &     \\
	   0  &  \mbox{if $i=1,4,7$}
             \end{array}
   \right.$}};

 \end{tikzpicture}

% \ \  \ \  \  \  \ \  \  \    (a) of Figure \ref{terminos medios 1}. 

}
%\caption{Possible middle terms}\label{terminos medios}
\end{figure}

\begin{figure}[H]
\begin{tabular}{ p{80mm}  p{80mm}}
\begin{tikzpicture}[scale=1]

\draw  (-3.5,0)node{$0$};
\draw  (-2.5,0)node{$M_{\alpha}$};
\draw  (-0.5,0)node{ $M_{\gamma_{1}}\oplus M_{\gamma_{2}}$};
\draw  (1.75,0)node{$M_{\beta}$};
\draw  (2.75,0)node{$0$};

\draw[->][line width=0.5pt] (-3.25,0) -- (-2.85,0);
\draw[->][line width=0.5pt] (-2.2,0) -- (-1.5,0);
\draw[->][line width=0.5pt] (0.4,0) -- (1.45,0);
\draw[->][line width=0.5pt] (2.05,0) -- (2.6,0);

\draw  (-1.95,0.25)node{$f$};
\draw  (-1.95,0.75)node{$||$};
\draw  (-1.75,2.5)node{{\tiny $ \left\{ \begin{array}{lc}
             1 &     \mbox{if $i=1,4,7$}\\
              &     \\
	   \begin{bmatrix}
1 \\
1
\end{bmatrix}  &  \mbox{if $i=2$}\\
              &     \\

\begin{bmatrix}
0 \\
1
\end{bmatrix}  &  \mbox{if $i=5$}\\
              &     \\
	   0  &  \mbox{if $i=3,6$}
             \end{array}
   \right.$}};

\draw  (1.25,0.25)node{$g$};
\draw  (1.25,0.75)node{$||$};
\draw  (1.5,2.5)node{{\tiny $ \left\{ \begin{array}{lc}
             1 &     \mbox{if $i=6,3$}\\
	 \begin{bmatrix}
1&0
\end{bmatrix} &     \mbox{if $i=5$}\\
              &     \\
	   \begin{bmatrix}
1&-1
\end{bmatrix}  &  \mbox{if $i=2$}\\
              &     \\
	   0  &  \mbox{if $i=1,4,7$}
             \end{array}
   \right.$}};

 \end{tikzpicture}

&
\begin{tikzpicture}[scale=1]

\draw  (-3.5,0)node{$0$};
\draw  (-2.5,0)node{$M_{\alpha}$};
\draw  (-0.5,0)node{$M_{\gamma_{1}}\oplus M_{\gamma_{2}}$};
\draw  (1.75,0)node{$M_{\beta}$};
\draw  (2.75,0)node{$0$};

\draw[->][line width=0.5pt] (-3.25,0) -- (-2.85,0);
\draw[->][line width=0.5pt] (-2.2,0) -- (-1.5,0);
\draw[->][line width=0.5pt] (0.4,0) -- (1.45,0);
\draw[->][line width=0.5pt] (2.05,0) -- (2.6,0);

\draw  (-1.95,0.25)node{$f$};
\draw  (-1.95,0.75)node{$||$};
\draw  (-1.75,2.5)node{{\tiny $ \left\{ \begin{array}{lc}
             1 &     \mbox{if $i=4,7$}\\
             2 &     \mbox{if $i=1$}\\
              &     \\
	   \begin{bmatrix}
1 \\
1
\end{bmatrix}  &  \mbox{if $i=5$}\\
             &     \\
	   \begin{bmatrix}
2 \\
1
\end{bmatrix}  &  \mbox{if $i=2$}\\
              &     \\
	   0  &  \mbox{if $i=3,6$}
             \end{array}
   \right.$}};

\draw  (1.25,0.25)node{$g$};
\draw  (1.25,0.75)node{$||$};
\draw  (1.5,2.5)node{{\tiny $ \left\{ \begin{array}{lc}
             1 &     \mbox{if $i=6$}\\
	 -1 &     \mbox{if $i=3$}\\
              &     \\
	   \begin{bmatrix}
-1&1
\end{bmatrix}  &  \mbox{if $i=5$}\\
              &     \\
	   \begin{bmatrix}
1&-2
\end{bmatrix}  &  \mbox{if $i=2$}\\

              &     \\
	   0  &  \mbox{if $i=1,4,7$}
             \end{array}
   \right.$}};

 \end{tikzpicture}

%\\[2mm]

% \ \  \ \  \  \  \ \  \  \    (c) of Figure \ref{terminos medios 1}. & \  \  \ \  \  \  \ \  \  \  (d) of Figure \ref{terminos medios 1}.

%\\[2mm]

\end{tabular}

\caption{Extensions (a), (c) and (d).}\label{sucesiones exactas}
\end{figure}

\end{ex}

%%%%%%%%%%%%%%%%%%%%%%%%
%%%%%%%%%%%%%%%%%%%%%%%%

\begin{ex} See the arcs crossing on the triangulated surface.

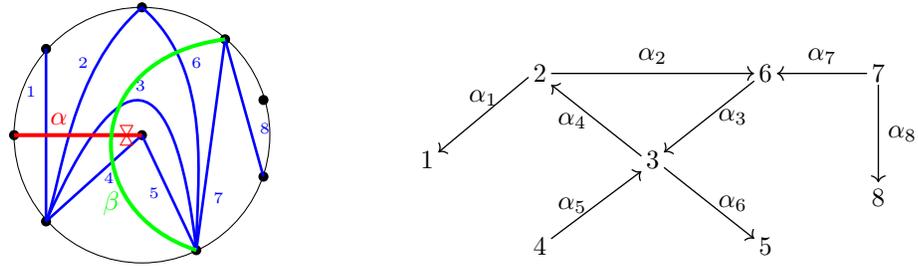
\begin{figure}[H]
\begin{tabular}{ p{50mm}  p{60mm}}
\begin{tikzpicture}[scale=0.85]

% configuracion

\draw (0,0) circle (2cm);    

\filldraw [black] 	(0,0)  circle (2pt)
			(1.9,0.55)  circle (2pt)
			(-1.5,1.35)  circle (2pt)
			(-2,0)  circle (2pt)
			(0.85,-1.8)  circle (2pt)
			(-1.5,-1.35)  circle (2pt)

			(0,2)  circle (2pt)

			(1.3,1.5)  circle (2pt)
			(1.9,-0.65)  circle (2pt);

%triangulación
\draw[color=blue][line width=1pt] (-1.5,1.35) -- (-1.5,-1.35)
node[pos=0.25,left] {{\tiny1}};	
\draw[color=blue][line width=1pt] (0,2) .. controls(-1.15, 1) and (-1.25,-0.75) .. (-1.5,-1.35)
node[pos=0.25,left] {{\tiny2}};
\draw[color=blue][line width=1pt] (-1.5,-1.35)  .. controls(-0.35, 1.25) and (0.5,1.25) .. (0.85,-1.8)
node[pos=0.5,above] {{\tiny 3}};
\draw[color=blue][line width=1pt] (0,2) .. controls(1.15, 1) and (0.85,-0.75) .. (0.85,-1.8)
node[pos=0.25,right] {{\tiny6}};
\draw[color=blue][line width=1pt] (0,0) -- (0.85,-1.8)
node[pos=0.5,left] {{\tiny5}};
\draw[color=blue][line width=1pt] (0,0) -- (-1.5,-1.35)
node[pos=0.5,right] {{\tiny4}};

\draw[color=blue][line width=1pt] (0.85,-1.8) -- (1.3,1.5) 
node[pos=0.25,right] {{\tiny7}};

\draw[color=blue][line width=1pt]  (1.3,1.5)  -- (1.9,-0.65) 
node[pos=0.65,right] {{\tiny8}};

%alpha

\draw  [color=red](-0.23,0)node{\rotatebox{90}{$\bowtie$}};
\draw[color=red][line width=1.5pt] (-2,0) --  (0,0)
node[pos=0.35,above] {$\alpha$};

%beta
\draw[color=green][line width=1.5pt] (0.85,-1.8) .. controls(-1, -1.2) and (-1,1.2) ..  (1.3,1.5) 
node[pos=0.25,left] {$\beta$};

 \end{tikzpicture}

&
\begin{tikzpicture}[scale=1 ]

% configuracion

\draw  (-3,0)node{$1$};
\draw  (-1.5,1.15)node{$2$};
\draw  (0,0)node{$3$};
\draw  (1.5,1.15)node{$6$};
\draw  (-1.5,-1.15)node{$4$};
\draw  (1.5,-1.15)node{$5$};
\draw  (3,1.15)node{$7$};
\draw  (3,-0.5)node{$8$};

%flechas
\draw[->][line width=0.5pt] (-1.65,1.1) -- (-2.85,0.1)
node[pos=0.5,above] {$\alpha_{1}$};	

\draw[->][line width=0.5pt] (-1.35,1.15) -- (1.35,1.15)
node[pos=0.5,above] {$\alpha_{2}$};	

\draw[->][line width=0.5pt] (1.35,1.05) -- (0.15,0.1)
node[pos=0.5,right] {$\alpha_{3}$};	
	
\draw[->][line width=0.5pt] (-0.15,0.05) -- (-1.35,1)
node[pos=0.5,left] {$\alpha_{4}$};

\draw[->][line width=0.5pt] (-1.35,-1.05) -- (-0.15,-0.15)
node[pos=0.5,left] {$\alpha_{5}$};	

\draw[->][line width=0.5pt] (0.15,-0.1) -- (1.35,-1.05)
node[pos=0.5,right] {$\alpha_{6}$};

\draw[->][line width=0.5pt] (2.85,1.15) -- (1.65,1.15)
node[pos=0.5,above] {$\alpha_{7}$};	

\draw[->][line width=0.5pt] (3,1) -- (3,-0.3)
node[pos=0.5,right] {$\alpha_{8}$};	
%gamma

 \end{tikzpicture}

%\\[2mm]

%\ \  \  \  \ \  \  \    (a) Triangulation &\ \  \  \ \ \ \  \  \ \ \  \  \  \ \  \  \  \ \  \  \  (b) Quiver

%\\[2mm]

\end{tabular}
\caption{Left: triangulation $\tau$ and arcs $\alpha$ and $\beta$. Right: quiver $Q(\tau)$}\label{arcos 2}
\end{figure}

See the triangulation and the arcs $\alpha$ and $\beta$ in Figure \ref{arcos 2}. The arc representations $M_\alpha$ and $M_\beta$ are:

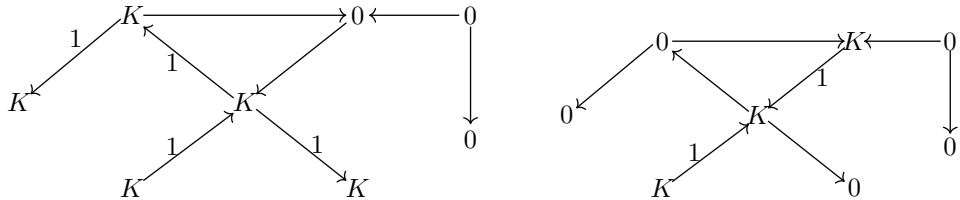
\begin{figure}[H]
\begin{tabular}{ p{70mm}  p{70mm}}
\begin{tikzpicture}[scale=0.8 ]
% configuracion

\draw  (-3,0)node{$K$};
\draw  (-1.5,1.15)node{$K$};
\draw  (0,0)node{$K$};
\draw  (1.5,1.15)node{$0$};
\draw  (-1.5,-1.15)node{$K$};
\draw  (1.5,-1.15)node{$K$};
\draw  (3,1.15)node{$0$};
\draw  (3,-0.5)node{$0$};

%flechas
\draw[->][line width=0.5pt] (-1.65,1.1) -- (-2.85,0.1)
node[pos=0.5,above] {$1$};	

\draw[->][line width=0.5pt] (-1.35,1.15) -- (1.35,1.15);

\draw[->][line width=0.5pt] (1.35,1.05) -- (0.15,0.1);

\draw[->][line width=0.5pt] (-0.15,0.05) -- (-1.35,1)
node[pos=0.5,left] {$1$};

\draw[->][line width=0.5pt] (-1.35,-1.05) -- (-0.15,-0.15)
node[pos=0.5,left] {$1$};	

\draw[->][line width=0.5pt] (0.15,-0.1) -- (1.35,-1.05)
node[pos=0.5,right] {$1$};

\draw[->][line width=0.5pt] (2.85,1.15) -- (1.65,1.15);

\draw[->][line width=0.5pt] (3,1) -- (3,-0.3);

%gamma

 \end{tikzpicture}

&
\begin{tikzpicture}[scale=0.8]

% configuracion

\draw  (-3,0)node{$0$};
\draw  (-1.5,1.15)node{$0$};
\draw  (0,0)node{$K$};
\draw  (1.5,1.15)node{$K$};
\draw  (-1.5,-1.15)node{$K$};
\draw  (1.5,-1.15)node{$0$};
\draw  (3,1.15)node{$0$};
\draw  (3,-0.5)node{$0$};

%flechas
\draw[->][line width=0.5pt] (-1.65,1.1) -- (-2.85,0.1);

\draw[->][line width=0.5pt] (-1.35,1.15) -- (1.35,1.15);

\draw[->][line width=0.5pt] (1.35,1.05) -- (0.15,0.1)
node[pos=0.5,right] {$1$};	
	
\draw[->][line width=0.5pt] (-0.15,0.05) -- (-1.35,1);

\draw[->][line width=0.5pt] (-1.35,-1.05) -- (-0.15,-0.15)
node[pos=0.5,left] {$1$};	

\draw[->][line width=0.5pt] (0.15,-0.1) -- (1.35,-1.05);

\draw[->][line width=0.5pt] (2.85,1.15) -- (1.65,1.15);

\draw[->][line width=0.5pt] (3,1) -- (3,-0.3);

%gamma

 \end{tikzpicture}

%\\[2mm]

%\ \  \  \  \ \  \  \  \ \  \  \  \ \  \  \  \ \  \  \  \ \  \  \  \ \  \  \  \ \  \  \   (a) $M_{\alpha}$ &\ \  \  \  \ \  \  \  \ \  \  \  \ \  \  \  \ \  \  \  \ \  \  \  \ \  \  \  \ \  \  \   (b) $M_{\beta}$

%\\[2mm]

\end{tabular}

\caption{Arc representations  $M_{\alpha}$ and $M_{\beta}$ form Figure \ref{arcos 2}}\label{rep alpha beta 2}
\end{figure}

In this example there is one possible middle term as it is shown in Figure \ref{sec2}. And from Proposition \ref{prop:preserva-dimension} we know that it will be a middle term in a non-split extension.  

\vspace{-3mm}
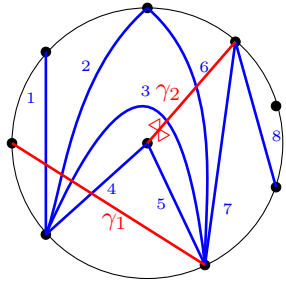
\begin{figure}[H]
\centering				

\subfigure{\begin{tikzpicture}[scale=0.9]

% configuracion

\draw (0,0) circle (2cm);    

\filldraw [black] 	(0,0)  circle (2pt)
			(1.9,0.55)  circle (2pt)
			(-1.5,1.35)  circle (2pt)
			(-2,0)  circle (2pt)
			(0.85,-1.8)  circle (2pt)
			(-1.5,-1.35)  circle (2pt)

			(0,2)  circle (2pt)

			(1.3,1.5)  circle (2pt)
			(1.9,-0.65)  circle (2pt);

%triangulación
\draw[color=blue][line width=1pt] (-1.5,1.35) -- (-1.5,-1.35)
node[pos=0.25,left] {{\tiny1}};	
\draw[color=blue][line width=1pt] (0,2) .. controls(-1.15, 1) and (-1.25,-0.75) .. (-1.5,-1.35)
node[pos=0.25,left] {{\tiny2}};
\draw[color=blue][line width=1pt] (-1.5,-1.35)  .. controls(-0.35, 1.25) and (0.5,1.25) .. (0.85,-1.8)
node[pos=0.5,above] {{\tiny 3}};
\draw[color=blue][line width=1pt] (0,2) .. controls(1.15, 1) and (0.85,-0.75) .. (0.85,-1.8)
node[pos=0.25,right] {{\tiny6}};
\draw[color=blue][line width=1pt] (0,0) -- (0.85,-1.8)
node[pos=0.5,left] {{\tiny5}};
\draw[color=blue][line width=1pt] (0,0) -- (-1.5,-1.35)
node[pos=0.5,right] {{\tiny4}};

\draw[color=blue][line width=1pt] (0.85,-1.8) -- (1.3,1.5) 
node[pos=0.25,right] {{\tiny7}};

\draw[color=blue][line width=1pt]  (1.3,1.5)  -- (1.9,-0.65) 
node[pos=0.65,right] {{\tiny8}};
%gamma

%alpha

\draw[color=red][line width=1pt] (-2,0) --  (0.85,-1.8)
node[pos=0.65,left] {$\gamma_{1}$};

%beta
\draw  [color=red](0.18,0.2)node{\rotatebox{120}{$\bowtie$}};
\draw[color=red][line width=1pt] (0,0) --  (1.3,1.5) 
node[pos=0.5,left] {$\gamma_{2}$};

 \end{tikzpicture}
 		
}		

\caption{Arcs that define a middle term $M_{\gamma_1} \oplus M_{\gamma_2}$ in a non-split extension.}\label{terminos medios 2}
						
 \end{figure}

We compute the arc representation $M_{\gamma_{1}}\oplus M_{\gamma_{2}}$ from Figure \ref{terminos medios 2}. The arc representation $M_{\gamma_{1}}$ is computed easily, while to compute $M_{\gamma_{2}}$, the arc $\gamma_{2}$ is replaced by the curve $\gamma_{q_{0},q_{1}}\cup[q_{1},q]_{j^{o}}$ and the detour curve is drawn. Then the arc representation is the following;

\begin{figure}[H]
\centering				
\subfigure{\begin{tikzpicture}[scale=0.95]

% configuracion

\draw  (-3,0)node{$K$};
\draw  (-1.5,1.15)node{$K$};
\draw  (0,0)node{$K^{2}$};
\draw  (1.5,1.15)node{$K$};
\draw  (-1.5,-1.15)node{$K^{2}$};
\draw  (1.5,-1.15)node{$K$};
\draw  (3,1.15)node{$0$};
\draw  (3,-0.5)node{$0$};

%flechas
\draw[->][line width=0.5pt] (-1.65,1.1) -- (-2.85,0.1)
node[pos=0.5,above] {$1$};	

\draw[->][line width=0.5pt] (-1.35,1.15) -- (1.35,1.15)
node[pos=0.5,above] {$0$};	

\draw[->][line width=0.5pt] (1.35,1.05) -- (0.23,0.18)
node[pos=0.25,below] {{\tiny $ \begin{bmatrix}
0\\
1
\end{bmatrix} $}};	
	
\draw[->][line width=0.5pt] (-0.15,0.05) -- (-1.35,1)
node[pos=0.5,left] {{\tiny $ \begin{bmatrix}
1&0
\end{bmatrix} $}};

\draw[->][line width=0.5pt] (-1.25,-1) -- (-0.2,-0.2)
node[pos=0.5,left] {$I_{2}$};	

\draw[->][line width=0.5pt] (0.15,-0.1) -- (1.35,-1.05)
node[pos=0.6,right] {{\tiny $ \begin{bmatrix}
1&0
\end{bmatrix} $}};

\draw[->][line width=0.5pt] (2.85,1.15) -- (1.65,1.15);

\draw[->][line width=0.5pt] (3,1) -- (3,-0.3);

 \end{tikzpicture}
 		
}		

\caption{Arc representation $M_{\gamma_{1}}\oplus M_{\gamma_{2}}$}\label{middle term 2}
						
 \end{figure}
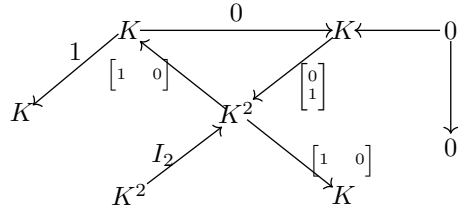	

The extension is shown next.

\begin{figure}[H]
\centering				

\subfigure{\begin{tikzpicture}[scale=1]

\draw  (-3.5,0)node{$0$};
\draw  (-2.5,0)node{$M_{\alpha}$};
\draw  (-0.5,0)node{$M_{\gamma_{1}}\oplus M_{\gamma_{2}}$};
\draw  (1.75,0)node{$M_{\beta}$};
\draw  (2.75,0)node{$0$};

\draw[->][line width=0.5pt] (-3.25,0) -- (-2.85,0);
\draw[->][line width=0.5pt] (-2.2,0) -- (-1.5,0);
\draw[->][line width=0.5pt] (0.4,0) -- (1.45,0);
\draw[->][line width=0.5pt] (2.05,0) -- (2.6,0);

\draw  (-1.95,0.25)node{$f$};
\draw  (-1.95,0.75)node{$||$};
\draw  (-1.75,2)node{{\tiny $ \left\{ \begin{array}{lc}
             1 &     \mbox{if $i=1,2,5$}\\
              &     \\
	   \begin{bmatrix}
1 \\
1
\end{bmatrix}  &  \mbox{if $i=3,4$}\\
              &     \\
	   0  &  \mbox{if $i=6,7,8$}
             \end{array}
   \right.$}};

\draw  (1.25,0.25)node{$g$};
\draw  (1.25,0.75)node{$||$};
\draw  (1.5,2)node{{\tiny $ \left\{ \begin{array}{lc}
             1 &     \mbox{if $i=6$}\\
              &     \\
	   \begin{bmatrix}
-1&1
\end{bmatrix}  &  \mbox{if $i=4,3$}\\
              &     \\
	   0  &  \mbox{if $i=1,2,5,7,8$}
             \end{array}
   \right.$}};

 \end{tikzpicture}
}		

\caption{Non-split extension from Figure \ref{terminos medios 2}}\label{exact sequence 2}
						
 \end{figure}	
\end{ex}

\subsection{Extensions in other surfaces}

In this section we apply the results in Section \ref{resultados-principales} to other surfaces.

The cluster category of a punctured surface $\mathcal{C}(S,M)$ was defined in \cite{QZ}. The authors establish a bijection between tagged curves and string objects in  $\mathcal{C}(S,M)$. Also in \cite[Section 5.2]{QZ} they show how Iyama-Yoshino reduction (see \cite[Section 1.3]{MP}) can be used in these categories. If we define a quiver with potential from a triangulation $\tau$ of $(S,M)$, such that $\gamma_0 \in \tau$, as we see in the Figure \ref{superficie-ejemplo3} then the category $\mathcal{C}(S,M)/\gamma_0$ (see \cite[Proposition 5]{MP}) is obtained as a cluster category from a quiver with potential $(Q(\tau),P(\tau))$ by removing the vertex corresponding to $\gamma_0$ in $\tau$ and also removing the terms containing arrows having that vertex as a source or target in $P(\tau)$.   

%\vspace{-5mm}

\begin{figure}[h!]
\scalebox{.75}{\centering
\def\svgwidth{1.9in}
%% Creator: Inkscape inkscape 0.92.5, www.inkscape.org
%% PDF/EPS/PS + LaTeX output extension by Johan Engelen, 2010
%% Accompanies image file '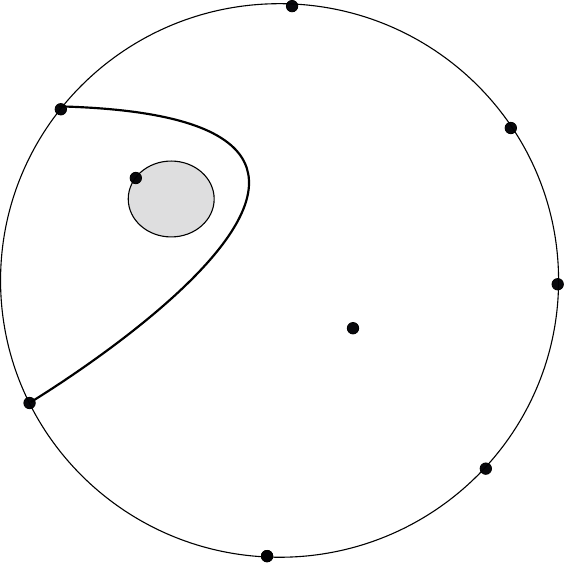' (pdf, eps, ps)
%%
%% To include the image in your LaTeX document, write
%%   \input{<filename>.pdf_tex}
%%  instead of
%%   \includegraphics{<filename>.pdf}
%% To scale the image, write
%%   \def\svgwidth{<desired width>}
%%   \input{<filename>.pdf_tex}
%%  instead of
%%   \includegraphics[width=<desired width>]{<filename>.pdf}
%%
%% Images with a different path to the parent latex file can
%% be accessed with the `import' package (which may need to be
%% installed) using
%%   \usepackage{import}
%% in the preamble, and then including the image with
%%   \import{<path to file>}{<filename>.pdf_tex}
%% Alternatively, one can specify
%%   \graphicspath{{<path to file>/}}
%% 
%% For more information, please see info/svg-inkscape on CTAN:
%%   http://tug.ctan.org/tex-archive/info/svg-inkscape
%%
\begingroup%
  \makeatletter%
  \providecommand\color[2][]{%
    \errmessage{(Inkscape) Color is used for the text in Inkscape, but the package 'color.sty' is not loaded}%
    \renewcommand\color[2][]{}%
  }%
  \providecommand\transparent[1]{%
    \errmessage{(Inkscape) Transparency is used (non-zero) for the text in Inkscape, but the package 'transparent.sty' is not loaded}%
    \renewcommand\transparent[1]{}%
  }%
  \providecommand\rotatebox[2]{#2}%
  \newcommand*\fsize{\dimexpr\f@size pt\relax}%
  \newcommand*\lineheight[1]{\fontsize{\fsize}{#1\fsize}\selectfont}%
  \ifx\svgwidth\undefined%
    \setlength{\unitlength}{270.63589355bp}%
    \ifx\svgscale\undefined%
      \relax%
    \else%
      \setlength{\unitlength}{\unitlength * \real{\svgscale}}%
    \fi%
  \else%
    \setlength{\unitlength}{\svgwidth}%
  \fi%
  \global\let\svgwidth\undefined%
  \global\let\svgscale\undefined%
  \makeatother%
  \begin{picture}(1,0.9972284)%
    \lineheight{1}%
    \setlength\tabcolsep{0pt}%
    \put(0,0){\includegraphics[width=\unitlength,page=1]{superficie-ejemplo3.pdf}}%
    \put(0.14745728,0.41824789){\color[rgb]{0,0,0}\makebox(0,0)[lt]{\lineheight{1.25}\smash{\begin{tabular}[t]{l}$\gamma_0$\end{tabular}}}}%
    \put(0,0){\includegraphics[width=\unitlength,page=2]{superficie-ejemplo3.pdf}}%
    \put(0.51049111,0.65815905){\color[rgb]{0,0,0}\makebox(0,0)[lt]{\lineheight{1.25}\smash{\begin{tabular}[t]{l}$\alpha$\end{tabular}}}}%
    \put(0,0){\includegraphics[width=\unitlength,page=3]{superficie-ejemplo3.pdf}}%
    \put(0.72664866,0.22584419){\color[rgb]{0,0,0}\makebox(0,0)[lt]{\lineheight{1.25}\smash{\begin{tabular}[t]{l}$\beta$\end{tabular}}}}%
  \end{picture}%
\endgroup%
}
\caption{Surface $(S,M)$ and arc $\gamma_0$ that splits the surface into two surfaces.}
\label{superficie-ejemplo3}
\end{figure}

Cutting along $\gamma_0$ splits the surface in Figure \ref{superficie-ejemplo3}. On one side we have an annulus of type $A_{2,1}$, and on the other side we have a punctured disk, where we see $\alpha$ and $\beta$ crossing. So the arcs $\alpha$ and $\beta$ lie on a disk and are such that $e(\alpha, \beta)  =2$. We can find a triangulation $\tau$ of $(S,M)$ containing $\gamma_0$ such that the portion of the triangulation on the disk is as in Figure \ref{Big}. Then we can define the associated quiver $Q(\tau)$ and potential $P(\tau)$ such that each one of the four multicurves in $\alpha^+ \beta$ correspond to middle terms of non-split extensions. These extensions lift to non-split triangles in $\mathcal{C}(S,M)$.

\begin{figure}[h!]
\begin{tabular}{ p{80mm}  p{70mm}}

\begin{tikzpicture}[scale=0.75]

% configuracion

\draw (0,0) circle (3.5cm);    
\draw[blue,fill=blue!25] (-1.5,0.8) circle (0.5cm);

\filldraw [black] 	(0,0)  circle (2pt)
			(0,3.5)  circle (2pt)
			(3.5,0)  circle (2pt)
			(-2.95,1.9)  circle (2pt)
			(2.45,2.5)  circle (2pt)
			(2.45,-2.5)  circle (2pt)
			(-2.85,-2)  circle (2pt)
			(-1.9,1.1)  circle (2pt)
			(0,-3.5)  circle (2pt);

%triangulación
\draw[color=blue][line width=1pt](-1.9,1.1) .. controls(-2.65, 0.35) and (-1.5,0) .. (0,0)
node[pos=0.25,below] {{\tiny1}};	
\draw[color=blue][line width=1pt](-1.9,1.1) .. controls(-1.7, 2.05) and (-0.85,1) .. (0,0)
node[pos=0.25,above] {{\tiny2}};
\draw[color=blue][line width=1pt] (0,0)  .. controls(-4.5, -1.75) and (-1.75,5) .. (0,0)
node[pos=0.5,left] {{\tiny 10}};
\draw[color=blue][line width=1pt] (-2.95,1.9)  .. controls(-3.5, -1.5) and (-1,-0.5) .. (0,0)
node[pos=0.25,below] {{\tiny8}};
\draw[color=blue][line width=1pt] (-2.95,1.9)  .. controls(-1.5, 3) and (-0.25,2.5) .. (0,0)
node[pos=0.45,above] {{\tiny4}};
\draw[color=blue][line width=1pt] (-2.95,1.9)  .. controls(-4.5, -2) and (0,-3.5) .. (0,-3.5)
node[pos=0.75,above] {{\tiny9}};
\draw[color=blue][line width=1pt] (-2.95,1.9)  .. controls(-1.7,3.5) and (1.7,3) .. (2.45,2.5)
node[pos=0.25,above] {{\tiny3}};
\draw[color=blue][line width=1pt] (0,0) -- (2.45,2.5)
node[pos=0.75,left] {{\tiny6}};
\draw[color=blue][line width=1pt] (0,0) -- (0,-3.5)
node[pos=0.75,left] {{\tiny7}};
\draw[color=blue][line width=1pt] (0,-3.5) -- (2.45,2.5)
node[pos=0.5,left] {{\tiny5}};
\draw[color=blue][line width=1pt] (0,-3.5) --(3.5,0)
node[pos=0.5,left] {{\tiny11}};

%curvas
\draw[color=red][line width=1pt] (0,3.5) .. controls(-2,-2) and (0.75,-1) .. (3.5,0)
node[pos=0.5,left] {{\tiny $\alpha$}};

\draw[color=green][line width=1pt] (-2.85,-2) .. controls(0.5,2.5) and (1,1.25) ..(2.45,-2.5)
node[pos=0.5,above] {{\tiny $\beta$}};

\draw  (0,3.7)node{$p$};
\draw  (3.7,0)node{$n$};
\draw  (-3,-2.2)node{$s$};
\draw  (2.55,-2.7)node{$r$};

 \end{tikzpicture}

&

\begin{tikzpicture}[scale=0.78 ]

% configuracion

\draw  (-3.1,3.6)node{$2$};
\draw  (-3.1,1.8)node{$1$};
\draw  (-3.1,0)node{$10$};
\draw  (-0.5,1.15)node{$4$};
\draw  (2,0)node{$6$};
\draw  (1.4,1.5)node{$3$};
\draw  (-1.5,-1.15)node{$8$};
\draw  (0.5,-1.15)node{$7$};
\draw  (-0.5,-3)node{$9$};
\draw  (2.5,-1.65)node{$5$};
\draw  (2.5,-3)node{$11$};

%flechas
\draw[->][line width=0.5pt] (-3.3,3.35) -- (-3.3,2.05)
node[pos=0.5,left] {$\alpha_{1}$};	
\draw[->][line width=0.5pt] (-2.9,3.35) -- (-2.9,2.05)
node[pos=0.5,right] {$\alpha_{17}$};
\draw[->][line width=0.5pt] (-3.1,1.55) -- (-3.1,0.25)
node[pos=0.5,left] {$\alpha_{2}$};	
\draw[->][line width=0.5pt] (-2.85,-0.15) -- (-1.7,-1.1)
node[pos=0.5,left] {$\alpha_{3}$};
\draw[->][line width=0.5pt] (-1.25,-1.15) -- (0.25,-1.15)
node[pos=0.5,below] {$\alpha_{4}$};	
\draw[->][line width=0.5pt] (0.75,-1.1)-- (1.8,-0.2)
node[pos=0.5,below] {$\alpha_{5}$};		
\draw[<-][line width=0.5pt] (1.95,0.25)-- (1.55,1.25)
node[pos=0.5,right] {$\alpha_{14}$};	
\draw[<-][line width=0.5pt]  (1.15,1.45)-- (-0.25,1.15)
node[pos=0.5,above] {$\alpha_{15}$};	
\draw[->][line width=0.5pt]   (-0.75,1.05)--  (-2.8,0)
node[pos=0.5,above] {$\alpha_{7}$};	
\draw[<-][line width=0.5pt]   (-1.4,-1.3)--  (-0.6,-2.75)
node[pos=0.5,left] {$\alpha_{9}$};	
\draw[->][line width=0.5pt]  (0.4,-1.35)--   (-0.3,-2.75)
node[pos=0.5,right] {$\alpha_{10}$};	
\draw[->][line width=0.5pt]  (2.05,-0.2)--   (2.45,-1.4)
node[pos=0.5,right] {$\alpha_{12}$};	
\draw[->][line width=0.5pt](2.25,-1.6)--(0.75,-1.25)
node[pos=0.5,below] {$\alpha_{11}$};	
\draw[->][line width=0.5pt] (2.5,-2.8)--(2.5,-1.85)
node[pos=0.5,right] {$\alpha_{13}$};	
\draw[->][line width=0.5pt] (-1.45,-0.95)--(-0.55,0.95)
node[pos=0.5,right] {$\alpha_{8}$};	
\draw[->][line width=0.5pt] (1.8,0.05)--(-0.4,1)
node[pos=0.5,right] {$\alpha_{6}$};	
\draw[->][line width=0.5pt] (-2.9,0.25) .. controls(-1.5,1) and (-1.5,2.5) ..   (-2.9,3.6)
node[pos=0.5,right] {$\alpha_{16}$};	
%gamma
\end{tikzpicture}
\end{tabular}
\caption{Left: Surface with punctures with a triangulation $\widetilde{\tau}$. Right: quiver $Q(\widetilde{\tau})$.}\label{ejemplo3}
\end{figure}
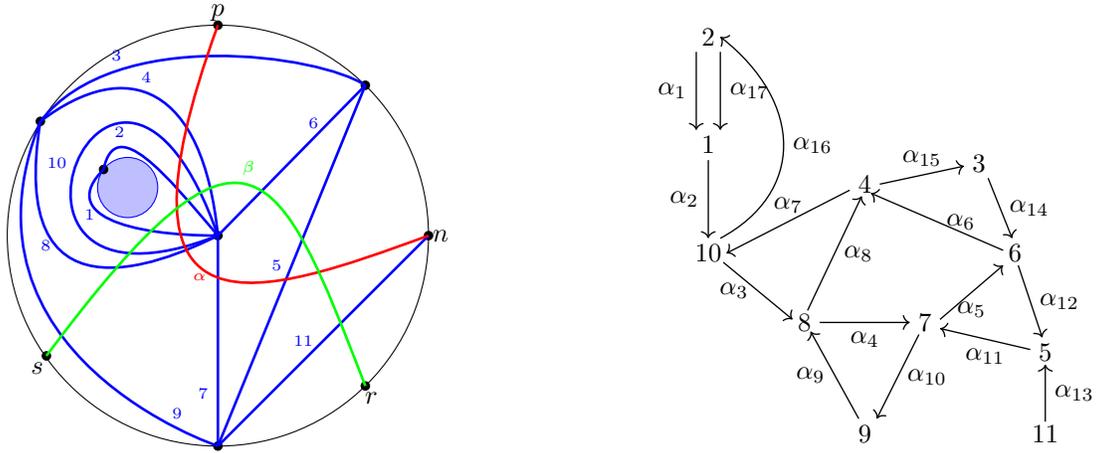

Let $\widetilde{\tau}$ be a triangulation of $(S,M)$ in Figure \ref{ejemplo3}. Let $(Q(\widetilde{\tau}), P(\widetilde{\tau}))$ the quiver with potential and $B = K Q(\widetilde{\tau}) / I$ the Jacobian algebra defined by $(Q(\widetilde{\tau}), P(\widetilde{\tau}))$. By Proposition \ref{prop:preserva-dimension}, for each multicurve $C \in \alpha^+ \beta$ such that the total dimension of $C$ and $\{\alpha, \beta \}$ coincide we can find a non-split sequence in $\mmod B$.

\begin{figure}[h!]
\scalebox{.85}{\centering
\def\svgwidth{5in}
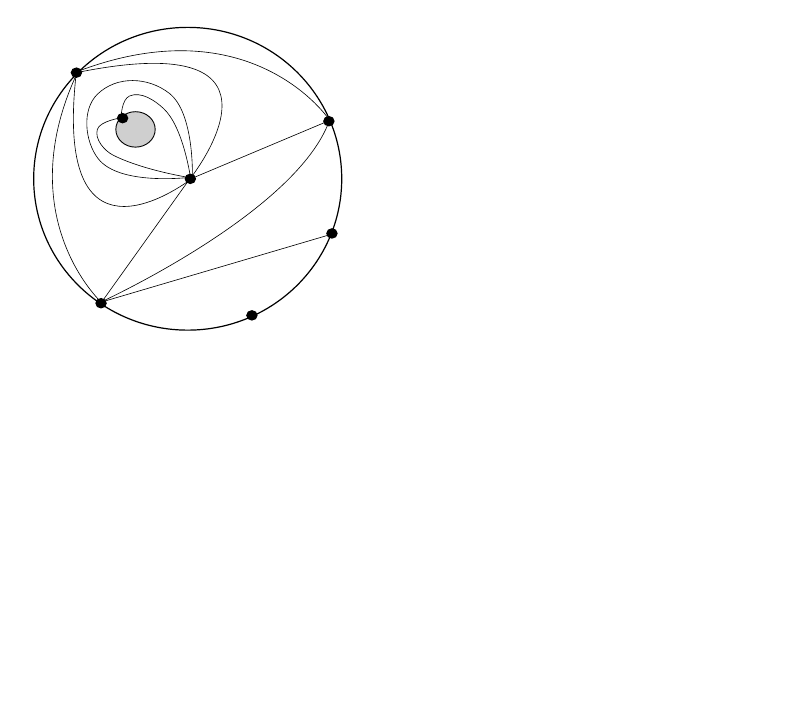}
\caption{Multicurves in $\alpha^+ \beta$. Label the multicurves (a) - (d) in lexicographic order. }\label{terminos medios 3}
\end{figure}

\begin{figure}[H]
\begin{tabular}{ p{80mm}  p{80mm}}
\begin{tikzpicture}[scale=.9 ]
% configuracion

\draw  (-3.1,3.6)node{$K$};
\draw  (-3.1,1.8)node{$K$};
\draw  (-3.1,0)node{$K^{2}$};
\draw  (-0.5,1.15)node{$K$};
\draw  (2,0)node{$0$};
\draw  (1.4,1.5)node{$K$};
\draw  (-1.5,-1.15)node{$K$};
\draw  (0.5,-1.15)node{$K$};
\draw  (-0.5,-3)node{$0$};
\draw  (2.5,-1.65)node{$K$};
\draw  (2.5,-3)node{$0$};

%flechas
\draw[->][line width=0.5pt] (-3.3,3.35) -- (-3.3,2.05)
node[pos=0.5,left] {{\tiny $1$}};	
\draw[->][line width=0.5pt] (-2.9,3.35) -- (-2.9,2.05)
node[pos=0.5,right] {{\tiny $0$}};
\draw[->][line width=0.5pt] (-3.1,1.55) -- (-3.1,0.25)
node[pos=0.5,left] {{\tiny $ \begin{bmatrix}
1\\
0
\end{bmatrix} $}};	
\draw[->][line width=0.5pt] (-2.85,-0.15) -- (-1.7,-1.1)
node[pos=0.5,left] {{\tiny $ \begin{bmatrix}
1&0\\
\end{bmatrix} $}};	
\draw[->][line width=0.5pt] (-1.25,-1.15) -- (0.25,-1.15)
node[pos=0.5,below] {{\tiny $1$}};
\draw[->][line width=0.5pt] (0.75,-1.1)-- (1.8,-0.2);
\draw[<-][line width=0.5pt] (1.95,0.25)-- (1.55,1.25);
\draw[<-][line width=0.5pt]  (1.15,1.45)-- (-0.25,1.15)
node[pos=0.5,above] {{\tiny $1$}};	
\draw[->][line width=0.5pt]   (-0.75,1.05)--  (-2.8,0)
node[pos=0.25,above]  {{\tiny $ \begin{bmatrix}
0\\
1
\end{bmatrix} $}};	
\draw[<-][line width=0.5pt]   (-1.4,-1.3)--  (-0.6,-2.75);
\draw[->][line width=0.5pt]  (0.4,-1.35)--   (-0.3,-2.75);
\draw[->][line width=0.5pt]  (2.05,-0.2)--   (2.45,-1.4);
\draw[->][line width=0.5pt](2.25,-1.6)--(0.75,-1.25)
node[pos=0.5,below] {{\tiny $1$}};	
\draw[->][line width=0.5pt] (2.5,-2.8)--(2.5,-1.85);
\draw[->][line width=0.5pt] (-1.45,-0.95)--(-0.55,0.95)
node[pos=0.5,right] {{\tiny $0$}};	
\draw[->][line width=0.5pt] (1.8,0.05)--(-0.4,1);
\draw[->][line width=0.5pt] (-2.9,0.25) .. controls(-1.5,1) and (-1.5,2.5) ..   (-2.9,3.6)
node[pos=0.75,right] {{\tiny $ \begin{bmatrix}
0&1\\
\end{bmatrix} $}};	
%gamma

 \end{tikzpicture}

&
\begin{tikzpicture}[scale=0.85]

% configuracion

\draw  (-3.1,3.6)node{$K$};
\draw  (-3.1,1.8)node{$K$};
\draw  (-3.1,0)node{$K^{2}$};
\draw  (-0.5,1.15)node{$K$};
\draw  (2,0)node{$K$};
\draw  (1.4,1.5)node{$0$};
\draw  (-1.5,-1.15)node{$K$};
\draw  (0.5,-1.15)node{$0$};
\draw  (-0.5,-3)node{$K$};
\draw  (2.5,-1.65)node{$K$};
\draw  (2.5,-3)node{$K$};

%flechas
\draw[->][line width=0.5pt] (-3.3,3.35) -- (-3.3,2.05)
node[pos=0.5,left] {{\tiny $1$}};	
\draw[->][line width=0.5pt] (-2.9,3.35) -- (-2.9,2.05)
node[pos=0.5,right] {{\tiny $0$}};
\draw[->][line width=0.5pt] (-3.1,1.55) -- (-3.1,0.25)
node[pos=0.5,left] {{\tiny $ \begin{bmatrix}
1\\
0
\end{bmatrix} $}};		
\draw[->][line width=0.5pt] (-2.85,-0.15) -- (-1.7,-1.1)
node[pos=0.5,left] {{\tiny $ \begin{bmatrix}
1&0\\
\end{bmatrix} $}};	
\draw[->][line width=0.5pt] (-1.25,-1.15) -- (0.25,-1.15);	
\draw[->][line width=0.5pt] (0.75,-1.1)-- (1.8,-0.2);
\draw[<-][line width=0.5pt] (1.95,0.25)-- (1.55,1.25);
\draw[<-][line width=0.5pt]  (1.15,1.45)-- (-0.25,1.15);
\draw[->][line width=0.5pt]   (-0.75,1.05)--  (-2.8,0)
node[pos=0.25,above] {{\tiny $ \begin{bmatrix}
0\\
1
\end{bmatrix} $}};		
\draw[<-][line width=0.5pt]   (-1.4,-1.3)--  (-0.6,-2.75)
node[pos=0.5,left] {{\tiny $1$}};	
\draw[->][line width=0.5pt]  (0.4,-1.35)--   (-0.3,-2.75);
\draw[->][line width=0.5pt]  (2.05,-0.2)--   (2.45,-1.4)
node[pos=0.5,right] {{\tiny $1$}};
\draw[->][line width=0.5pt](2.25,-1.6)--(0.75,-1.25);
\draw[->][line width=0.5pt] (2.5,-2.8)--(2.5,-1.85)
node[pos=0.5,right] {{\tiny $1$}};	
\draw[->][line width=0.5pt] (-1.45,-0.95)--(-0.55,0.95)
node[pos=0.5,right] {{\tiny $0$}};	
\draw[->][line width=0.5pt] (1.8,0.05)--(-0.4,1)
node[pos=0.5,right] {{\tiny $1$}};	
\draw[->][line width=0.75pt] (-2.9,0.25) .. controls(-1.5,1) and (-1.5,2.5) ..   (-2.9,3.6)
node[pos=0.75,right] {{\tiny $ \begin{bmatrix}
0&1\\
\end{bmatrix} $}};	
%gamma
 \end{tikzpicture}

\end{tabular}

\caption{Arc representations  $M_{\alpha}$ (left) and $M_{\beta}$ (right).}\label{rep alpha beta 3}
\end{figure}

The possible middle terms are given by the next multicurves in Figure \ref{terminos medios 3}. Observe that each multicurve $C$ satisfies $d(C) = d(\{\alpha, \beta \})$. To compute $M_{\gamma_{1}}\oplus M_{\gamma_{2}}\oplus M_{\gamma_{3}}$ in  item (a) of Figure \ref{terminos medios 3}, the arc representations $M_{\gamma_{1}}$ and $M_{\gamma_{3}}$ are computed easily, while to compute $M_{\gamma_{2}}$, the arc $\gamma_{2}$ is replaced by the curve $\gamma_{q_{0},q_{1}}\cup[q_{1},n]_{j^{o}}$ and the detour curve is drawn. The arc representation in item (b) can be found in a similar way.

\begin{figure}[b]
 \centering
\subfigure[$M_{\gamma_{1}}\oplus M_{\gamma_{2}}\oplus M_{\gamma_{3}}$ Figure \ref{terminos medios 3}. ]{
\begin{tikzpicture}[scale=0.9]
% configuracion

\draw  (-3.1,3.6)node{$K^{2}$};
\draw  (-3.1,1.8)node{$K^{2}$};
\draw  (-3.1,0)node{$K^{4}$};
\draw  (-0.5,1.15)node{$K^{2}$};
\draw  (2,0)node{$K$};
\draw  (1.4,1.5)node{$K$};
\draw  (-1.5,-1.15)node{$K^{2}$};
\draw  (0.5,-1.15)node{$K$};
\draw  (-0.5,-3)node{$K$};
\draw  (2.5,-1.65)node{$K^{2}$};
\draw  (2.5,-3)node{$K$};

%flechas
\draw[->][line width=0.5pt] (-3.3,3.35) -- (-3.3,2.05)
node[pos=0.5,left] {$I_{2}$};	
\draw[->][line width=0.5pt] (-2.9,3.35) -- (-2.9,2.05)
node[pos=0.5,right] {{\tiny $0$}};
\draw[->][line width=0.5pt] (-3.1,1.55) -- (-3.1,0.25)
node[pos=0.5,left] {{\tiny $ \begin{bmatrix}
1&0\\
0&0\\
0&1\\
0&0
\end{bmatrix} $}};	
\draw[->][line width=0.5pt] (-2.85,-0.15) -- (-1.7,-1.1)
node[pos=0.35,left] {{\tiny $ \begin{bmatrix}
1&0&0&0\\
0&0&1&0
\end{bmatrix} $}};
\draw[->][line width=0.5pt] (-1.25,-1.15) -- (0.25,-1.15)
node[pos=0.5,below] {{\tiny $ \begin{bmatrix}
0&1\\
\end{bmatrix} $}};	
\draw[->][line width=0.5pt] (0.75,-1.1)-- (1.8,-0.2)
node[pos=0.5,below] {{\tiny $0$}};	
\draw[<-][line width=0.5pt] (1.95,0.25)-- (1.55,1.25)
node[pos=0.5,right] {{\tiny $0$}};
\draw[<-][line width=0.5pt]  (1.15,1.45)-- (-0.25,1.15)
node[pos=0.5,above] {{\tiny $ \begin{bmatrix}
1&0\\
\end{bmatrix} $}};		
\draw[->][line width=0.5pt]   (-0.75,1.05)--  (-2.8,0)
node[pos=0.25,above]{{\tiny $ \begin{bmatrix}
0&0\\
1&0\\
0&0\\
0&1
\end{bmatrix} $}};	
\draw[<-][line width=0.5pt]   (-1.4,-1.3)--  (-0.6,-2.75)
node[pos=0.5,left] {{\tiny $ \begin{bmatrix}
1\\
0
\end{bmatrix} $}};		
\draw[->][line width=0.5pt]  (0.4,-1.35)--   (-0.3,-2.75)
node[pos=0.5,right] {{\tiny $0$}};
\draw[->][line width=0.5pt]  (2.05,-0.2)--   (2.45,-1.4)
node[pos=0.5,right] {{\tiny $ \begin{bmatrix}
1\\
0
\end{bmatrix} $}};		
\draw[->][line width=0.5pt](2.25,-1.6)--(0.75,-1.25)
node[pos=0.5,below] {{\tiny $ \begin{bmatrix}
1&0\\
\end{bmatrix} $}};		
\draw[->][line width=0.5pt] (2.5,-2.8)--(2.5,-1.85)
node[pos=0.5,right] {{\tiny $ \begin{bmatrix}
0\\
1
\end{bmatrix} $}};		
\draw[->][line width=0.5pt] (-1.45,-0.95)--(-0.55,0.95)
node[pos=0.5,right] {{\tiny $0$}};
\draw[->][line width=0.5pt] (1.8,0.05)--(-0.4,1)
node[pos=0.5,below] {{\tiny $ \begin{bmatrix}
0\\
1
\end{bmatrix} $}};		
\draw[->][line width=0.5pt] (-2.9,0.25) .. controls(-1.5,1) and (-1.5,2.5) ..   (-2.8,3.6)
node[pos=0.85,right] {{\tiny $ \begin{bmatrix}
0&1&0&0\\
0&0&0&1
\end{bmatrix} $}};	
%gamma

 \end{tikzpicture}

}
%\caption{$M_{\gamma_{1}}\oplus M_{\gamma_{2}}\oplus M_{\gamma_{3}}$ Figure \ref{terminos medios 3}. }\label{terminos medios}
\end{figure}

\begin{figure}[H]
\begin{tabular}{ p{90mm}  p{70mm}}
\begin{tikzpicture}[scale=0.9]
% configuracion

\draw  (-3.1,3.6)node{$K^{2}$};
\draw  (-3.1,1.8)node{$K^{2}$};
\draw  (-3.1,0)node{$K^{4}$};
\draw  (-0.5,1.15)node{$K^{2}$};
\draw  (2,0)node{$K$};
\draw  (1.4,1.5)node{$K$};
\draw  (-1.5,-1.15)node{$K^{2}$};
\draw  (0.5,-1.15)node{$K$};
\draw  (-0.5,-3)node{$0$};
\draw  (2.5,-1.65)node{$K^{2}$};
\draw  (2.5,-3)node{$K$};

%flechas
\draw[->][line width=0.5pt] (-3.3,3.35) -- (-3.3,2.05)
node[pos=0.5,left] {$I_{2}$};	
\draw[->][line width=0.5pt] (-2.9,3.35) -- (-2.9,2.05)
node[pos=0.5,right] {{\tiny $0$}};
\draw[->][line width=0.5pt] (-3.1,1.55) -- (-3.1,0.25)
node[pos=0.5,left] {{\tiny $ \begin{bmatrix}
1&0\\
0&0\\
0&1\\
0&0
\end{bmatrix} $}};		
\draw[->][line width=0.5pt] (-2.85,-0.15) -- (-1.7,-1.1)
node[pos=0.33,left] {{\tiny $ \begin{bmatrix}
1&0&0&0\\
0&0&1&0
\end{bmatrix} $}};
\draw[->][line width=0.5pt] (-1.25,-1.15) -- (0.25,-1.15)
node[pos=0.5,below] {{\tiny $ \begin{bmatrix}
0&1\\
\end{bmatrix} $}};
\draw[->][line width=0.5pt] (0.75,-1.1)-- (1.8,-0.2)
node[pos=0.5,below] {{\tiny $0$}};	
\draw[<-][line width=0.5pt] (1.95,0.25)-- (1.55,1.25)
node[pos=0.5,right] {{\tiny $0$}};	
\draw[<-][line width=0.5pt]  (1.15,1.45)-- (-0.25,1.15)
node[pos=0.5,above] {{\tiny $ \begin{bmatrix}
0&1\\
\end{bmatrix} $}};	
\draw[->][line width=0.5pt]   (-0.75,1.05)--  (-2.8,0)
node[pos=0.25,above] {{\tiny $ \begin{bmatrix}
0&0\\
1&0\\
0&0\\
0&1
\end{bmatrix} $}};	
\draw[<-][line width=0.5pt]   (-1.4,-1.3)--  (-0.6,-2.75);
\draw[->][line width=0.5pt]  (0.4,-1.35)--   (-0.3,-2.75);
\draw[->][line width=0.5pt]  (2.05,-0.2)--   (2.45,-1.4)
node[pos=0.5,right] {{\tiny $ \begin{bmatrix}
1\\
0
\end{bmatrix} $}};	
\draw[->][line width=0.5pt](2.25,-1.6)--(0.75,-1.25)
node[pos=0.5,below] {{\tiny $ \begin{bmatrix}
0&1\\
\end{bmatrix} $}};	
\draw[->][line width=0.5pt] (2.5,-2.8)--(2.5,-1.85)
node[pos=0.5,right] {{\tiny $ \begin{bmatrix}
0\\
1
\end{bmatrix} $}};
\draw[->][line width=0.5pt] (-1.45,-0.95)--(-0.55,0.95)
node[pos=0.5,right] {{\tiny $0$}};
\draw[->][line width=0.5pt] (1.8,0.05)--(-0.4,1)
node[pos=0.5,below] {{\tiny $ \begin{bmatrix}
1\\
0
\end{bmatrix} $}};	
\draw[->][line width=0.5pt] (-2.9,0.25) .. controls(-1.5,1) and (-1.5,2.5) ..   (-2.8,3.6)
node[pos=0.85,right]{{\tiny $ \begin{bmatrix}
0&1&0&0\\
0&0&0&1
\end{bmatrix} $}};	
%gamma

 \end{tikzpicture}

&
\begin{tikzpicture}[scale=0.9]

% configuracion

\draw  (-3.1,3.6)node{$K^{2}$};
\draw  (-3.1,1.8)node{$K^{2}$};
\draw  (-3.1,0)node{$K^{4}$};
\draw  (-0.5,1.15)node{$K^{2}$};
\draw  (2,0)node{$K$};
\draw  (1.4,1.5)node{$K$};
\draw  (-1.5,-1.15)node{$K^{2}$};
\draw  (0.5,-1.15)node{$K$};
\draw  (-0.5,-3)node{$K$};
\draw  (2.5,-1.65)node{$K^{2}$};
\draw  (2.5,-3)node{$K$};

%flechas
\draw[->][line width=0.5pt] (-3.3,3.35) -- (-3.3,2.05)
node[pos=0.5,left] {$I_{2}$};	
\draw[->][line width=0.5pt] (-2.9,3.35) -- (-2.9,2.05)
node[pos=0.5,right] {{\tiny $0$}};
\draw[->][line width=0.5pt] (-3.1,1.55) -- (-3.1,0.25)
node[pos=0.5,left] {{\tiny $ \begin{bmatrix}
1&0\\
0&0\\
0&1\\
0&0
\end{bmatrix} $}};		
\draw[->][line width=0.5pt] (-2.85,-0.15) -- (-1.7,-1.1)
node[pos=0.33,left] {{\tiny $ \begin{bmatrix}
1&0&0&0\\
0&0&1&0
\end{bmatrix} $}};
\draw[->][line width=0.5pt] (-1.25,-1.15) -- (0.25,-1.15)
node[pos=0.5,below] {{\tiny $ \begin{bmatrix}
0&1\\
\end{bmatrix} $}};
\draw[->][line width=0.5pt] (0.75,-1.1)-- (1.8,-0.2)
node[pos=0.5,below] {{\tiny $0$}};	
\draw[<-][line width=0.5pt] (1.95,0.25)-- (1.55,1.25)
node[pos=0.5,right] {{\tiny $0$}};	
\draw[<-][line width=0.5pt]  (1.15,1.45)-- (-0.25,1.15)
node[pos=0.5,above] {{\tiny $ \begin{bmatrix}
1&0\\
\end{bmatrix} $}};	
\draw[->][line width=0.5pt]   (-0.75,1.05)--  (-2.8,0)
node[pos=0.25,above] {{\tiny $ \begin{bmatrix}
0&0\\
1&0\\
0&0\\
0&1
\end{bmatrix} $}};	
\draw[<-][line width=0.5pt]   (-1.4,-1.3)--  (-0.6,-2.75)
node[pos=0.5,left]  {{\tiny $ \begin{bmatrix}
1\\
0
\end{bmatrix} $}};		
\draw[->][line width=0.5pt]  (0.4,-1.35)--   (-0.3,-2.75)
node[pos=0.5,right] {{\tiny $0$}};	
\draw[->][line width=0.5pt]  (2.05,-0.2)--   (2.45,-1.4)
node[pos=0.5,right] {{\tiny $ \begin{bmatrix}
1\\
1
\end{bmatrix} $}};	
\draw[->][line width=0.5pt](2.25,-1.6)--(0.75,-1.25)
node[pos=0.5,below] {{\tiny $ \begin{bmatrix}
0&1\\
\end{bmatrix} $}};	
\draw[->][line width=0.5pt] (2.5,-2.8)--(2.5,-1.85)
node[pos=0.5,right] {{\tiny $ \begin{bmatrix}
0\\
1
\end{bmatrix} $}};
\draw[->][line width=0.5pt] (-1.45,-0.95)--(-0.55,0.95)
node[pos=0.5,right] {{\tiny $0$}};
\draw[->][line width=0.5pt] (1.8,0.05)--(-0.4,1)
node[pos=0.5,below] {{\tiny $ \begin{bmatrix}
0\\
1
\end{bmatrix} $}};	
\draw[->][line width=0.5pt] (-2.9,0.25) .. controls(-1.5,1) and (-1.5,2.5) ..   (-2.8,3.6)
node[pos=0.85,right]{{\tiny $ \begin{bmatrix}
0&1&0&0\\
0&0&0&1
\end{bmatrix} $}};	
%gamma

 \end{tikzpicture}

\\[2mm]

 \ \  \ \  \  \  \ \  \  \    (c) $M_{\gamma_{1}}\oplus M_{\gamma_{2}}$ Figure \ref{terminos medios 3}. & \  \  \ \  \  \  \ \  \  \  (d) $M_{\gamma_{1}}\oplus M_{\gamma_{2}}$ Figure \ref{terminos medios 3}.

\\[2mm]

\end{tabular}

\caption{Middle terms of non-split extensions.}\label{arc rep 3}
\end{figure}

The non-split extensions are the following;

\begin{figure}[H]
\centering				

\subfigure{\begin{tikzpicture}[scale=1]

							% configuracion
\draw  (-3.5,0)node{$0$};
\draw  (-2.5,0)node{$M_{\alpha}$};
\draw  (-0.5,0)node{{\tiny $M_{\gamma_{1}}\oplus M_{\gamma_{2}}\oplus M_{\gamma_{3}}$}};
\draw  (1.75,0)node{$M_{\beta}$};
\draw  (2.75,0)node{$0$};

\draw[->][line width=0.5pt] (-3.25,0) -- (-2.85,0);
\draw[->][line width=0.5pt] (-2.2,0) -- (-1.75,0);
\draw[->][line width=0.5pt] (0.6,0) -- (1.45,0);
\draw[->][line width=0.5pt] (2.05,0) -- (2.6,0);

\draw  (-1.95,0.25)node{$f$};
\draw  (-1.95,0.75)node{$||$};
\draw  (-2.5,2.5)node{{\tiny $ \left\{ \begin{array}{lc}
             1 &     \mbox{if $i=3,7$}\\
             0  &  \mbox{if $i=6,9,11$}\\
              &     \\
	   \begin{bmatrix}
1 \\
1
\end{bmatrix}  &  \mbox{if $i=1,2,4,5,8$}\\
              &     \\
	    \begin{bmatrix}
1 &0\\
0&1\\
1&0\\
0&1
\end{bmatrix}  &  \mbox{if $i=10$}
             \end{array}
   \right.$}};

\draw  (1.25,0.25)node{$g$};
\draw  (1.25,0.75)node{$||$};
\draw  (1.75,2.5)node{{\tiny $ \left\{ \begin{array}{lc}
             1 &     \mbox{if $i=9,11$}\\
	 -1 &     \mbox{if $i=6$}\\
              &     \\
	   0  &  \mbox{if $i=7,3$}\\
              &     \\
	   \begin{bmatrix}
-1&1
\end{bmatrix}  &  \mbox{if $i=5$}\\
	 &     \\
	   \begin{bmatrix}
1&-1
\end{bmatrix}  &  \mbox{if $i=1,2,4,8$}\\
 &     \\
	   \begin{bmatrix}
1&0&-1&0\\
0&1&0&-1
\end{bmatrix}  &  \mbox{if $i=10$}
             \end{array}
   \right.$}};

	 \end{tikzpicture}
 		
}							

%\caption{Short sequences exact.}\label{sucesiones exactas 3}

\end{figure}

\begin{figure}[H]
\centering				

\subfigure{\begin{tikzpicture}[scale=1]

							% configuracion
\draw  (-3.5,0)node{$0$};
\draw  (-2.5,0)node{$M_{\alpha}$};
\draw  (-0.5,0)node{ $M_{\gamma_{1}}\oplus M_{\gamma_{2}}$};
\draw  (1.75,0)node{$M_{\beta}$};
\draw  (2.75,0)node{$0$};

\draw[->][line width=0.5pt] (-3.25,0) -- (-2.85,0);
\draw[->][line width=0.5pt] (-2.2,0) -- (-1.5,0);
\draw[->][line width=0.5pt] (0.4,0) -- (1.45,0);
\draw[->][line width=0.5pt] (2.05,0) -- (2.6,0);

\draw  (-1.95,0.25)node{$f$};
\draw  (-1.95,0.75)node{$||$};
\draw  (-2.5,2.5)node{{\tiny $ \left\{ \begin{array}{lc}
             1 &     \mbox{if $i=3,7$}\\
             0  &  \mbox{if $i=6,9,11$}\\
              &     \\
	   \begin{bmatrix}
1 \\
1
\end{bmatrix}  &  \mbox{if $i=1,2,4,5,8$}\\
              &     \\
	    \begin{bmatrix}
1 &0\\
0&1\\
1&0\\
0&1
\end{bmatrix}  &  \mbox{if $i=10$}
             \end{array}
   \right.$}};

\draw  (1.25,0.25)node{$g$};
\draw  (1.25,0.75)node{$||$};
\draw  (1.75,2.5)node{{\tiny $ \left\{ \begin{array}{lc}
             1 &     \mbox{if $i=6$}\\
	 -1 &     \mbox{if $i=11$}\\
              &     \\
	   0  &  \mbox{if $i=3,7,9$}\\
             	 &     \\
	   \begin{bmatrix}
1&-1
\end{bmatrix}  &  \mbox{if $i=1,2,4,5,8$}\\
 &     \\
	   \begin{bmatrix}
1&0&-1&0\\
0&1&0&-1
\end{bmatrix}  &  \mbox{if $i=10$}
             \end{array}
   \right.$}};

	 \end{tikzpicture}
 		
}							

\subfigure{\begin{tikzpicture}[scale=1]

\draw  (-3.5,0)node{$0$};
\draw  (-2.5,0)node{$M_{\alpha}$};
\draw  (-0.5,0)node{$M_{\gamma_{1}}\oplus M_{\gamma_{2}}$};
\draw  (1.75,0)node{$M_{\beta}$};
\draw  (2.75,0)node{$0$};

\draw[->][line width=0.5pt] (-3.25,0) -- (-2.85,0);
\draw[->][line width=0.5pt] (-2.2,0) -- (-1.5,0);
\draw[->][line width=0.5pt] (0.4,0) -- (1.45,0);
\draw[->][line width=0.5pt] (2.05,0) -- (2.6,0);

\draw  (-1.95,0.25)node{$f$};
\draw  (-1.95,0.75)node{$||$};
\draw  (-2.5,3)node{{\tiny $ \left\{ \begin{array}{lc}
             1 &     \mbox{if $i=3,7$}\\
             0  &  \mbox{if $i=6,9,11$}\\
              &     \\
	   \begin{bmatrix}
1 \\
1
\end{bmatrix}  &  \mbox{if $i=1,2,4,8$}\\
	&     \\
	   \begin{bmatrix}
2 \\
1
\end{bmatrix}  &  \mbox{if $i=5$}\\
              &     \\
	    \begin{bmatrix}
1 &0\\
0&1\\
1&0\\
0&1
\end{bmatrix}  &  \mbox{if $i=10$}
             \end{array}
   \right.$}};

\draw  (1.25,0.25)node{$g$};
\draw  (1.25,0.75)node{$||$};
\draw  (1.75,2.75)node{{\tiny $ \left\{ \begin{array}{lc}
             1 &     \mbox{if $i=9$}\\
	 -1 &     \mbox{if $i=6$}\\
	 -2&     \mbox{if $i=11$}\\
              &     \\
	   0  &  \mbox{if $i=3,7$}\\
              &     \\
	   \begin{bmatrix}
1&-1
\end{bmatrix}  &  \mbox{if $i=1,2,4,8$}\\
	 &     \\
	   \begin{bmatrix}
1&-2
\end{bmatrix}  &  \mbox{if $i=5$}\\
 &     \\
	   \begin{bmatrix}
1&0&-1&0\\
0&1&0&-1
\end{bmatrix}  &  \mbox{if $i=10$}
             \end{array}
   \right.$}};

	 \end{tikzpicture}
 		
}

\caption{Non-split extensions obtained via punctured skein relations.}\label{sucesiones exactas 3}

\end{figure}

\bibliographystyle{alpha}
\bibliography{biblio}

\end{document}